\title{Strongly pseudo-effective and numerically flat reflexive sheaves}
\author{Xiaojun WU}
\date{\today}
\newtheorem{mythm}{Theorem}
\newtheorem{mylem}{Lemma}
\newtheorem{myprop}{Proposition}
\newtheorem{myex}{Example}
\newtheorem{mycor}{Corollary}
\newtheorem{mydef}{Definition}
\newtheorem{myrem}{Remark}
\newcommand{\Addresses}{{
  \bigskip
  \footnotesize

Xiaojun Wu, \textsc{Institut Fourier, Univerisité Grenoble Alpes,
38400 Saint-Martin-d'Hères}\par\nopagebreak
  \textit{E-mail address},  \texttt{xiaojun.wu@univ-grenoble-alpes.fr}

}}
\newcommand{\Subjclass}{{
  \bigskip
  \footnotesize
Primary 32Q15; Secondary 32Q57

}}
\begin{document}
\def\cI{\mathcal{I}}
\def\Z{\mathbb{Z}}
\def\Q{\mathbb{Q}}  \def\C{\mathbb{C}}
 \def\R{\mathbb{R}}
 \def\N{\mathbb{N}}
 \def\H{\mathbb{H}}
  \def\P{\mathbb{P}}
 \def\rC{\mathcal{C}}
  \def\tors{\mathrm{Tors}}
  \def\d{\partial}
 \def\dbar{{\overline{\partial}}}
\def\dzbar{{\overline{dz}}}
 \def\ii{\mathrm{i}}
  \def\d{\partial}
 \def\dbar{{\overline{\partial}}}
\def\dzbar{{\overline{dz}}}
\def \ddbar {\partial \overline{\partial}}
\def\cN{\mathcal{N}}
\def\cE{\mathcal{E}}  \def\cO{\mathcal{O}}
\def\cF{\mathcal{F}}
\def\cS{\mathcal{S}}
\def\cQ{\mathcal{Q}}
\def\P{\mathbb{P}}
\def\cI{\mathcal{I}}
\def \loc{\mathrm{loc}}
\def \cC{\mathcal{C}}
\bibliographystyle{plain}
\def \dim{\mathrm{dim}}
\def \Sing{\mathrm{Sing}}
\def \Id{\mathrm{Id}}
\def \rank{\mathrm{rank}}
\def \tr{\mathrm{tr}}
\def \ch{\mathrm{ch}}
\def \td{\mathrm{Td}}
\def \Ric{\mathrm{Ric}}
\def \Vol{\mathrm{Vol}}
\def \Gr{\mathrm{Gr}}
\def \RHS{\mathrm{RHS}}
\def \liminf{\mathrm{liminf}}
\def \ker{\mathrm{Ker}}
\def \nn{\mathrm{nn}}

\def\stateparagraph{\vskip7pt plus 2pt minus 1pt\noindent}
\maketitle

\Addresses
\Subjclass
\begin{abstract}
In this paper, we discuss the concept of strongly pseudoeffective vector bundle and also introduce strongly pseudoeffective torsion-free sheaves over compact K\"ahler manifolds. We show that a strongly pseudoeffective reflexive sheaf over a compact K\"ahler manifold with vanishing first Chern class is in fact a numerically flat vector bundle. A proof is obtained through a natural construction of positive currents representing the Segre classes of strongly pseudoeffective vector bundles.
\end{abstract}
\section{Introduction}
The concept of numerical flatness introduced in \cite{DPS94} proved itself to be instrumental in the study and classification theory of compact K\"ahler manifolds with nef anticanonical bundles. It has been studied by many authors and in many works, cf. \cite{Cao18}, \cite{Cao19}, \cite{CH17}, \cite{CH19}, \cite{CCM}, \cite{CP17}, \cite{HIM},  \cite{HPS}, \cite{Wang19} among others.

Recall that a holomorphic vector bundle $E$ is called numerically flat if both $E$ and $E^*$ are nef (equivalently if $E$ and $(\det E)^{-1}$ are nef).
In fact, the condition of being numerically flat yields strong restrictions for the curvature of the corresponding vector bundle. Actually,
in \cite{DPS94}, Demailly, Peternell and Schneider proved that a numerically flat bundle $E$ on a compact Kähler manifold $X$ admits a filtration by vector bundles whose graded pieces are Hermitian flat. In some sense, numerical flatness is the algebraic analogue of metric flatness.

In \cite{CCM} and \cite{HIM}, the authors consider the following question.
If a strongly pseudo-effective vector bundle over a projective manifold has a vanishing first Chern class, is this vector bundle numerically flat?
Since a vector bundle $E$ is numerically flat if and only if $E$ and $\det(E)^{-1}$ are nef, the question amounts to ask whether the vector bundle is in fact nef. 

Intuitively, a positive singular metric on the vector bundle $E$ would induce a positive singular metric on the determinant $\det(E)$. 
But since the first Chern class of $E$ (i.e.\ the Chern class of $\det(E)$) is trivial, any metric with (semi)positive curvature must be flat and thus cannot possess any singularity. This implies that the given positive singular metric on $E$ has to be smooth as well.

From this point of view, the same property should hold on an arbitrary compact K\"ahler manifold, and not just on projective manifolds, since all properties under consideration are independent of the projectivity condition. One of the goals of this work is to confirm this philosophy. Namely, we prove the following
\stateparagraph
{\bf Main Theorem.} {\it Let $E$ be a strongly psef vector bundle over a compact K\"ahler manifold $(X, \omega)$ with $c_1(E)=0$. 
 Then $E$ is a nef vector bundle.}
\paragraph{}
The main technical tool is the construction of Segre currents. More precisely, we define a Segre $(k,k)$-closed positive current as the direct image of the wedge product of the curvature current of $\cO_{\P(E)}(1)$, as soon as we have an appropriate codimension condition on the singular locus of the metric.
\stateparagraph
{\bf Main technical lemma.} {\it
Let $E$ be a strongly psef vector bundle of rank $r$ over a compact K\"ahler manifold $(X, \omega)$. 
Let $(\cO_{\P(E)}(1),h_\varepsilon)$ be singular metric with analytic singularities such that 
$$i \Theta(\cO_{\P(E)}(1),h_\varepsilon) \geq - \varepsilon \pi^* \omega$$
and the codimension of $\pi(\Sing (h_\varepsilon))$ is at least $k$ in $X$.
Then there exists a $(k,k)$-positive current in the class $\pi_* (c_1(\cO_{\P(E)}(1))+\varepsilon \pi^* \{\omega\})^{r+k-1}$.}
\paragraph{}
In fact, the construction in our Main technical lemma would work for a broader situation as stated in the following theorem.
\stateparagraph
{\bf Theorem A.} {\it
Let $\pi: X \to Y$ be a submersion between compact K\"ahler manifolds  of relative dimension $r-1$. 
Let $T$ be a closed positive $(1,1)-$current in the cohomology class $\{ \alpha \} \in H^{1,1}(X, \R)$
such that $T$ has analytic singularities and is smooth on $X \setminus \pi^{-1}(Z)$ with $Z$ a closed analytic set of codimension at least $k$.
Assume that for any $y \in Y$, there exist an open neighborhood $U$ of $y$ and a quasi-psh function $\psi$ on $X$ such that
$\alpha + i \d \dbar \psi \geq 0$
in the sense of currents on
$\pi^{-1}(U)$ and $\psi$ is smooth outside a closed analytic set of codimension at least $k+r$.
Then there exists a closed positive current in the cohomology class $\pi_* \alpha^{r+k-1}$.
}
\paragraph{}
The strategy of the proof of the Main theorem is as follows.
We show that the Lelong numbers of the corresponding Segre current control the Lelong numbers of the weight functions of the singular metrics prescribed in the definition of a strongly pseudoeffective vector bundle. Then, we observe that the Lelong numbers of Segre currents must tend to 0 in the limit, as the unique (semi)positive current in $c_1(E)$ is the zero current.
Thus the Lelong numbers of the weight functions uniformally tend to 0 as the Lelong numbers of the Segre currents.
By Demailly's regularisation theorem, the weight functions of the metrics can be regularised, thus the vector bundle is actually nef. 

In fact, we can expect an even stronger property.
Since $E$ is strongly psef, the class $c_1(\cO_{\P(E)}(1))$ is psef.
Intuitively, $c_1(\cO_{\P(E)}(1))$ contains a not too singular current (in the sense that the projection of the singular part onto $X$ is contained in some analytic subset of codimension at least 1).
Thus the wedge powers of appropriate exponents of this current in the first Chern class are defined and positive, as well as their direct images under $\pi: \P(E) \to X$.
In particular, if $r$ is the rank of $E$, we can hope that the second Segre class $\pi_* (c_1(\cO_{\P(E)}(1)))^{r+1}$ is positive (by this, we mean that its cohomology class contains a positive current) 

Remind that the second Segre class is equal to $c_1(E)^2-c_2(E)$. By the Bogomolov inequality if $E$ is semistable, when $c_1(E)=0$, the integration of $c_2(E) \wedge \omega^{n-2}$ on $X$ is positive where $\omega$ is a Kähler form on $X$ and $n$ is the dimension of $X$.
Comparing these two facts, one knows that $c_2(E)=0$ and the Bogomolov inequality is in fact an equality which implies that $E$ is in fact flat.

For a reflexive sheaf $\cF$, the Chern classes can be defined as follows.
Let $\sigma$ be any modification such that $\sigma^* \cF/\tors$ is a vector bundle.
The existence of such modification is provided by the fundamental work of \cite{Ros}, \cite{GR} and \cite{Rie}.
Then for $i=1,2$, $c_i(\cF)=\sigma_* c_i(\sigma^* \cF/\tors)$ which is independent of the choice of modification $\sigma$. The rough idea is that 
the above consideration should hold on some birational model of $X$
and we conclude by the work of \cite{DPS94} instead of using the Bogomolov inequality.


In order to study the positivity of torsion free coherent sheaves, it is useful to define in full generality the nef (or strongly psef) property for such sheaves.
\stateparagraph
{\bf Definition.} {\it
A torsion free coherent sheaf $\cF$ over a compact complex manifold is called nef (resp. strongly psef) if there exists some modification $\sigma:\tilde{X} \to X$ such that $\sigma^* \cF/\tors$ is a nef (resp. strongly psef) vector bundle.}
\paragraph{}
The above considerations let us hope the stronger fact that over every compact K\"ahler manifold $(X, \omega)$, a strongly psef reflexive sheaf with trivial first Chern class is in fact a nef vector bundle. 
In Section 5, we prove that this is actually the case.
A difficulty of the above approach is that in general a wedge product of positive currents is not necessarily well defined. Instead of proceeding directly, we first prove the following result.
\stateparagraph
{\bf Lemma.} {\it
Let $\cF$ be a nef reflexive sheaf over a compact K\"ahler manifold $(X, \omega)$ with $c_1(\cF)=0$. 
 Then $\cF$ is a nef vector bundle.}
\paragraph{}
Now combining the main theorem, we can conclude that
\stateparagraph
{\bf Corollary.} {\it
Let $\cF$ be a strongly psef reflexive sheaf over a compact K\"ahler manifold $(X, \omega)$ with $c_1(\cF)=0$. 
 Then $\cF$ is a nef vector bundle.}
\paragraph{}
Note that in the approach with the Bogomolov inequality, we have to take wedge products that are well defined without imposing any restriction on the codimension of singular part of the metric. In this situation, for a strongly psef vector bundle $E$, we can find a positive current in $c_1(E)$ but not necessarily in $c_2(E)$.

At the end of the paper, as a geometric application, we show that an irreducible symplectic, or Calabi-Yau manifold does not have a strongly psef tangent bundle or cotangent bundle. This generalises the work of \cite{DPS94} and \cite{Nak} that only applied to the projective setting.
In the singular and projective setting, the strongly result is proven in Theorem 1.6 of \cite{HP} and Corollary 6.5 \cite{Dru} for threefolds. (They prove that in this case $\cO_{\P(E)}(1)$ is not a psef line bundle where $E$ is the tangent bundle or the cotangent bundle.)

We also generalise the main results to the $\Q-$twisted case analogous to the result of \cite{LOY20} in the compact K\"ahler setting.

The organisation of this paper is as follows. In Section 2, the concept of strongly psef vector bundles is discussed.
We give a definition of strongly psef vector bundle of the Kähler version essentially equivalent to the one proposed in \cite{BDPP}. By this equivalent condition, we can show that some usual algebraic operations can still be taken for strongly psef vector bundles. 
For example, the direct sum or tensor product of strongly psef vector bundles is still strongly psef. In Section 3, 
we investigate the concept of nef/strongly psef torsion free coherent sheaves and algebraic operations of these sheaves.
Then we show that a numerically flat reflexive sheaf on an arbitrary compact K\"ahler manifold is in fact a vector bundle. This result can also be generalised to strongly pseudoeffective (strongly psef) reflexive sheaves $\cF$ such that $c_1(\det \cF)=0$ in Section 5. 
In Section 4, we make a digression to introduce the definition of Segre forms (or Segre currents), as a tool to treat the strongly psef case. It~should be observed that a similar construction has been done in \cite{LRRR}.

In this note, all manifolds are supposed to be compact without any explicit mention.

\textbf{Acknowledgement} I thank Jean-Pierre Demailly, my PhD supervisor, for his guidance, patience and generosity. 
I would like to thank Junyan Cao, S\'ebastien Boucksom, Simone Diverio, Andreas H\"oring and Richard Lärkäng for some very useful suggestions on the previous draft of this work.
I would also like to express my gratitude to colleagues of Institut Fourier for all the interesting discussions we had. This work is supported by the PhD program AMX of \'Ecole Polytechnique and Ministère de l'Enseignement Supérieur et de la Recherche et de l’Innovation, and the European Research Council grant ALKAGE number 670846 managed by J.-P. Demailly.
We thank the anonymous reviewer for a very careful reading of this paper, and for insightful comments and suggestions.
\section{Strongly pseudoeffective vector bundles}
The following definition of a strongly psef vector bundle is a reformulation of the definition of \cite{BDPP} (Definition 7.1).
\begin{mydef}
Let $(X, \omega)$ be a compact K\"ahler manifold and $E$ a holomorphic vector bundle on $X$. 
Then $E$ is said to be strongly pseudo-effective (strongly psef for short) if the line bundle $\cO_{\P(E)}(1)$ is pseudo-effective on the projectivized bundle $\P(E)$ of hyperplanes of $E$, i.e.\ if for every $\varepsilon>0$ there exists a singular metric $h_\varepsilon$ with analytic singularities on $\cO_{\P(E)}(1)$ and a curvature current $i \Theta(h_\varepsilon) \geq - \varepsilon \pi^* \omega$, and if the projection $\pi(\Sing(h_\varepsilon))$ of the singular set of~$h_\varepsilon$ is not equal to~$X$.
\end{mydef}
One can observe that in \cite{BDPP} the definition is expressed rather in terms of the non-nef locus. 
\begin{mydef}{(\rm\cite{DPS01})}
Let $\varphi_1,\varphi_2$ be two quasi-psh functions on $X$ $($i.e. $i \d \dbar \varphi_i \geq -C \omega$ in the sense of currents for some $C \geq 0)$. Then, $\varphi_1$ is said to be less singular than $\varphi_2$ $($we write $\varphi_1 \preceq \varphi_2)$ if we have $\varphi_2 \leq \varphi_1+C_1$ for some constant $C_1$.
Let $\alpha$ be a psef class in $H^{1,1}_{BC}(X,\R)$ and $\gamma$ be a smooth real $(1,1)$-form.
Let $T_1,T_2,\theta \in \alpha$
with $\theta$ smooth and $T_i=\theta+i\d \dbar \varphi_i$ $(i=1,2)$, the
potential $\varphi_i$ being defined up to a constant since $X$ is compact.
We say that $T_1 \preceq T_2$, resp.\ singularity equivalent $T_1\sim T_2$,
if $\varphi_1 \preceq \varphi_2$, resp.\ if 
$\varphi_1 \preceq \varphi_2$ and $\varphi_2 \preceq \varphi_1$.

A minimal element $T_{\min,\gamma}$ with respect to the pre-order relation $\preceq$ always exists. Such an element can be obtained by taking the upper semi-continuous upper envelope of all $\varphi_{i}$ such that $\theta +i \d \dbar \varphi_i \geq \gamma$ and $\sup_X \varphi_i=0$. It is unique up to equivalence of
singularities.
\end{mydef}
\begin{mydef}(Non-nef   locus)

The  non-nef  locus  of  a  pseudo-effective  class $\alpha \in H^{1,1}_{BC}(X,\R)$ is defined to be
$$E_{\nn}(\alpha) :=\bigcup_{\varepsilon >0}~\bigcup_{c >0} E_c(T_{\min, -\varepsilon \omega})$$
where $\omega$ is any Hermitian metric.
\end{mydef}
Let us observe that we can replace $\pi^* \omega$ by any smooth K\"ahler form $\tilde{\omega}$ on $\P(E)$ in the definition of a strongly psef vector bundle.
The reason is as follows.
On the one hand, $\pi^* \omega \leq C \tilde{\omega}$ for some $C >0$ since $X$ is compact.
Thus, $i \Theta(h_\varepsilon) \geq - \varepsilon \pi^* \omega$ implies that
$i \Theta(h_\varepsilon) \geq -C \varepsilon \tilde{ \omega}$.
On the other hand,
since $\cO_{\P(E)}(1)$ is relatively $\pi$-ample, we have
$\varepsilon_0 i \Theta_{h_0}(\cO_{\P(E)}(1)) +\pi^* \omega\geq \varepsilon_1\tilde\omega$ for any given smooth Hermitian metric $h_0$ on $E$, if $0<\varepsilon_1\ll\varepsilon_0\ll 1$ are small enough. Assuming that there exists a singular metric $h_\varepsilon$ on $\cO_{\P(E)}(1)$ such that $i \Theta_{h_\varepsilon}(\cO_{\P(E)}(1)) \geq - \varepsilon\tilde\omega$, we infer that the metric $h'_\varepsilon=h_0^{\varepsilon/\varepsilon_1}h_\varepsilon^{1-\varepsilon/\varepsilon_1}$
has a curvature lower bound 
$$i\Theta_{h'_\varepsilon}(\cO_{\P(E)}(1))\geq
\frac{\varepsilon}{\varepsilon_1}\big(\varepsilon_1\tilde\omega-\pi^*\omega\big)
-\Big(1-\frac{\varepsilon}{\varepsilon_1}\Big)\varepsilon\tilde\omega\geq
-\frac{\varepsilon}{\varepsilon_1}\pi^*\omega.
$$
In \cite{BDPP}, a holomorphic vector bundle $E$ was defined to be strongly pseudo-effective if the line bundle $\cO_{\P(E)}(1)$ is pseudo-effective on the projectivized bundle $\P(E)$
of hyperplanes of $E$, and if the projection $\pi(E_{\nn}(\cO_{\P(E)}(1)))$ of the non-nef locus of $\cO_{\P(E)}(1)$ onto $X$ does not cover all of $X$.
By definition, 
$$E_{\nn}(c_1(\cO_{\P(E)}(1))) \subset \bigcup_{\varepsilon >0} \Sing(T_{\min,-\varepsilon\tilde\omega})\subset \bigcup_{\varepsilon >0} \Sing(h_\varepsilon).$$ 
Hence a strongly psef vector bundle defined in Definition 1 is strongly psef under the definition of  \cite{BDPP}.
On the other hand, 
by the regularization theorem,  we can construct from $T_{\min, -\varepsilon \tilde{\omega}}$ a metric $h_{2\varepsilon}$ on $\cO_{\P(E)}(1)$ with
$i \Theta(h_{2\varepsilon}) \geq -2 \varepsilon \tilde{\omega}$.
By definition, $\Sing(h_{2\varepsilon}) \subset \bigcup_{c >0} E_c(T_{\min, -2\varepsilon \tilde{\omega}})$ thus it does not project onto $X$.
Hence our definition is equivalent to the definition of \cite{BDPP}.

We remark that the definition of strongly psef vector bundle we used is stronger than the widely used weak definition. 
A vector bundle $E$ is called psef in the weak sense if $\cO_{\P(E)}(1)$ is a psef line bundle over $\P(E)$.
Of course, our definition of strongly psef vector bundle coincide with the widely used weak definition in the case of line bundle.
However, this weak definition is too weak to give a classification even if we pose some strong topological obstruction like with vanishing first Chern class.
For example, if $X$ is a projective manifold and $A$ is an ample line bundle over $X$, for any $p \neq 0$, $A^p \oplus (A^p)^{*}$ is a psef vector bundle in the weak sense with vanishing first Chern class.
Intuitively, a psef vector bundle can have negative curvature in some direction which is not enough for our propose to construct some positive current in the first Chern class of the determinant bundle.

It should be noticed that pseudo-effectiveness in the weak sense is a Zariski closed condition while strong pseudo-effectiveness is not Zariski closed. 
More precisely, let $p:\mathfrak{X} \to \Delta$ be a proper holomorphic submersion which defines a family of compact K\"ahler manifolds over the unit disc $\Delta$ and $E$ be a holomorphic vector bundle over $\mathfrak{X}$.
Then the set $t \in \Delta$ such that the restriction $E|_{X_t}$ is a psef vector bundle over $X_t$ in the weak sense is a Zariski closed set where $X_t:=p^{-1}(t)$.
A complete proof can be found e.g. in the appendix of \cite{AH19} by Simone Diverio.
However, the same does not hold for strong pseudo-effectiveness.
For example, we can take the following example indicated to the author by Jean-Pierre Demailly.
\begin{myex}{\rm (Theorem 2.2.5 \cite{OSS80})

Let $x_1, \cdots, x_m$ be the points of the projective plane $\P^2$.
There is a holomorphic rank 2 bundle $E$ over $\P^2$ whose restriction to any line $L$, on which exactly a points of the set $\{x_1,\cdots, x_m \}$ lie, splits in the form
$$E|_L =\cO_L(a) \oplus \cO_L(-a).$$
The generic splitting type of this bundle is $(0,0)$.

The construction of the vector bundle is as follows.
Let $\sigma:Y \to \P^2$ be the blow up of $\P^2$ over $\{x_1,\cdots, x_m \}$ with exceptional divisor $C=\sum_{i=1}^m C_i$.
Let $E'$ be a rank two vector bundle over $Y$ such that it satisfies the extension
$$0 \to \cO_Y(C) \to E' \to \cO_Y(-C) \to 0$$
and its restriction to each $C_i$ satisfies the Euler sequence
$$0 \to \cO_{C_i}(-1) \to E'|_{C_i} \cong \cO_{C_i}^{\oplus 2} \to \cO_{C_i}(1) \to 0.$$
It can be proved that $E'$ is the pull back of some vector bundle $E$ over $\P^2$.
We have the short exact sequence
$$0 \to \cO_{\tilde{L}}(a) \to E'|_{\tilde{L}} \to \cO_{\tilde{L}}(-a) \to 0.$$
where $a$ is the number of $\{x_1,\cdots, x_m \}$ which lie in $L$.
The short exact sequence splits since $H^1(\tilde{L},\cO_{\tilde{L}}(2a))=0.$
The blow up induces a biholomorphism between the strict transform of a line $\tilde{L}$ to $L$ which gives the conclusion.

Thus we can construct a family of vector bundles whose restriction to some special fibers is not strongly psef   although the restriction to the general fiber is strongly psef (in fact trivial).
The lines in the projective plane form a family of $\P^1$ over the Grassmannian $Gr(2, 3)$.
The total space $\mathfrak{X}$ is a closed submanifold of $\P^2 \times Gr(2,3)$.
Consider the vector bundle which is the restriction over $\mathfrak{X}$ of the pull back of the previous constructed bundle under $p_1: \P^2 \times Gr(2,3) \to \P^2 $.
}
\end{myex} 

A related definitions in the projective case is also widely used in the literature,
which is weak positivity in the sense of Nakayama (cf. eg. \cite{Nak} Definition 3.20).
A torsion free coherent sheaf $\cF$ is weakly positive  at $x \in X$ a projective manifold if, for any $a \in \N^*$ and for any ample line bundle $A$ on $X$, there exists $b \in \N^*$ such that
$(\mathrm{Sym}^{ab} \cF)^{\lor \lor} \otimes A^b$
is globally generated at $x$, where $(\mathrm{Sym}^{ab} \cF)^{\lor \lor}$ is the double
dual of $ab$-th symmetric power of $\cF$. 
A torsion free coherent sheaf is called weak positive in the sense of Nakayama if it is weak positive at some point.
It is proven in Proposition 7.2 \cite{BDPP} that for a vector bundle $E$ over a projective manifold $X$, $E$ is psef in our strong sense if and only if $E$ is weak positive in the sense of Nakayama.

Now we give still another equivalent definition of a strongly psef vector bundle.
The argument is analogous to the one of  \cite[Theorem 4.1]{Dem92} in the singular setting.
Intuitively, being strongly psef is equivalent to the existence of ``algebraic" approximation currents.
Here ``algebraic" means that the approximation can be obtained from the sections of higher degree tensor product of the vector bundle.
(Of course the sections are local since the global sections on $X$ does not necessarily exist.)
We construct approximating
metrics
by use of a Bergman kernel technique and use a Hörmander 
type  $L^2$ estimate to get
the required curvature estimates.
For the convenience of the reader, we recall the basic $L^2$ estimate
that we need.

\begin{mylem}{\rm(Corollary 5.3 in \cite{Dem12})}

Let $(X,\omega)$ be a K\"ahler manifold, $dim\; X=n$. Assume that $X$ is weakly pseudo-convex (in particular it is the case for any compact K\"ahler manifold). Let $F$ be a holomorphic line bundle equipped with a degenerate metric whose local weights are denoted $\varphi \in L^1_{loc}$, i.e. $H=e^{-\varphi}$. 
Suppose that
$$i \Theta_{F,h}=\frac{i}{\pi}\d \dbar \varphi \ge \varepsilon \omega$$
in the sense of currents 
for some $\varepsilon >0$. Then for any form 
$g \in L^2(X,\wedge^{n,q}T_{X}^{*} \otimes F)$ satisfying $\dbar g= 0$, there exists $f \in L^2(X,\wedge^{n,q-1}T_{X}^{*} \otimes F)$ such that $\dbar f=g$ and
$$\int_X |f|^2 e^{-\varphi}dV_\omega \leq \frac{1}{q\varepsilon} \int_X |g|^2 e^{-\varphi}dV_\omega.$$
\end{mylem}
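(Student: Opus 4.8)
The plan is to prove this by the two classical devices behind every $L^2$ existence theorem of Hörmander type: an \emph{a priori} estimate coming from the Bochner--Kodaira--Nakano identity, and a functional-analytic (Riesz/Hahn--Banach) duality argument turning that estimate into an actual solution of $\bar\partial f = g$. The genuine work is not in either of these but in reducing the stated generality --- weight only in $L^1_{\mathrm{loc}}$, $X$ only weakly pseudoconvex --- to the model situation in which $\varphi$ is smooth and $\omega$ is complete. I would begin with that model case. For a compactly supported smooth $(n,q)$-form $u$ with values in $F$, the Bochner--Kodaira--Nakano identity $\Delta'' = \Delta' + [i\Theta_{F,h},\Lambda]$ (with $\Delta''=\bar\partial\bar\partial^*+\bar\partial^*\bar\partial$ and $\Delta'$ the analogous Laplacian for the $(1,0)$-part $D'$ of the Chern connection) gives
$$\|\bar\partial u\|^2 + \|\bar\partial^* u\|^2 = \|D' u\|^2 + \|(D')^{*}u\|^2 + \int_X \big\langle [i\Theta_{F,h},\Lambda] u, u\big\rangle\, dV_\omega.$$
Since $u$ has bidegree $(n,q)$, the form $D'u$ has bidegree $(n+1,q)=0$; and precisely because $p=n$, the curvature term is controlled: $[i\Theta_{F,h},\Lambda]$ is semipositive as soon as $i\Theta_{F,h}\ge 0$, and $[i\Theta_{F,h},\Lambda]\ge q\varepsilon\,\mathrm{Id}$ on $(n,q)$-forms once $i\Theta_{F,h}\ge\varepsilon\omega$ (in the normalisation of the statement). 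Discarding also the nonnegative term $\|(D')^{*}u\|^2$, we get the a priori inequality $q\varepsilon\,\|u\|^2\le\|\bar\partial u\|^2+\|\bar\partial^* u\|^2$, which --- using completeness of $\omega$, via the Gaffney/Andreotti--Vesentini density of compactly supported smooth forms in the graph norm of $\bar\partial\oplus\bar\partial^*$ --- extends to every $u\in\mathrm{Dom}(\bar\partial)\cap\mathrm{Dom}(\bar\partial^*)$ of bidegree $(n,q)$.

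The solution is then extracted in the usual way. The a priori bound forces $\bar\partial$ to have closed range in bidegree $(n,q)$; given $g$ with $\bar\partial g=0$, decompose any $v\in\mathrm{Dom}(\bar\partial^*)$ as $v=v_1+v_2$ with $v_1\in\ker\bar\partial$ and $v_2\perp\ker\bar\partial$ (so $\bar\partial^* v_2=0$ and $\langle g,v\rangle=\langle g,v_1\rangle$), and estimate
$$|\langle g,v\rangle|^2\le\|g\|^2\,\|v_1\|^2\le\frac{1}{q\varepsilon}\,\|g\|^2\big(\|\bar\partial v_1\|^2+\|\bar\partial^* v_1\|^2\big)=\frac{1}{q\varepsilon}\,\|g\|^2\,\|\bar\partial^* v\|^2.$$
Hence $\bar\partial^* v\mapsto\langle g,v\rangle$ is a well-defined bounded functional of norm at most $(q\varepsilon)^{-1/2}\|g\|$, and Riesz representation produces $f$ with $\|f\|^2\le\frac{1}{q\varepsilon}\|g\|^2$ and $\langle g,v\rangle=\langle f,\bar\partial^* v\rangle$ for all $v$, i.e.\ $\bar\partial f=g$ in the sense of currents.

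It remains to dispose of the two simplifying hypotheses. Since $X$ is weakly pseudoconvex, fix a smooth plurisubharmonic exhaustion $\psi$ and write $X=\bigcup_c X_c$ with $X_c=\{\psi<c\}$. On each $X_c$, make the metric complete by replacing $\omega$ with $\omega+i\partial\bar\partial\chi(\psi)$ for a suitable convex increasing $\chi$ (keeping it $\ge\omega$), and regularise $\varphi$ to a decreasing sequence of smooth weights $\varphi_\nu\searrow\varphi$ with $i\Theta\ge(\varepsilon-\delta_\nu)\omega$, $\delta_\nu\downarrow 0$. Applying the model case on $X_c$ and letting first $\nu\to\infty$, then $c\to\infty$, one extracts by weak $L^2$-compactness and a diagonal argument a global $f$ with $\bar\partial f=g$ and the desired bound, passing the integral inequality to the limit via $e^{-\varphi_\nu}\nearrow e^{-\varphi}$ and Fatou. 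The point where care is really needed --- and the main obstacle --- is keeping the constant $(q\varepsilon)^{-1}$ \emph{uniform} through these two limits: the metric modification $\omega\mapsto\omega'\ge\omega$ changes pointwise norms and the volume form, and the reason this does no harm is exactly that we work with $(n,q)$- and $(n-1,q)$-forms, for which $|u|^2_{\omega}\,dV_\omega$ is monotone in $\omega$ --- which is also why the statement is phrased in precisely these bidegrees.
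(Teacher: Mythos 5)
The paper itself does not prove this lemma; it is quoted verbatim as Corollary 5.3 of Demailly's book \cite{anal}, so there is no in-paper argument to compare against. Your proof is correct and is precisely the standard Andreotti--Vesentini--H\"ormander scheme given in that reference: the Bochner--Kodaira--Nakano inequality $q\varepsilon\|u\|^2\le\|\bar\partial u\|^2+\|\bar\partial^* u\|^2$ for $(n,q)$-forms in the complete smooth setting, the Hahn--Banach/Riesz duality step using the decomposition $v=v_1+v_2$ with $v_2\in(\ker\bar\partial)^\perp=\overline{\mathrm{Im}\,\bar\partial^*}\subset\ker\bar\partial^*$, and the reduction to the model case by exhaustion, completion of the metric, and decreasing regularization of the weight, with the crucial monotonicity of $|u|^2_\omega\,dV_\omega$ in $\omega$ for top-degree $(n,q)$-forms keeping the constant $1/(q\varepsilon)$ uniform through the limits. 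You have correctly identified both the routine parts and the genuinely delicate ones (completeness for the Gaffney density argument, uniformity of the constant under the metric modification), and the logic of each step is sound.
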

We will also need the following lemma stated by Demailly to glue the local weights into a global one, via a partition of unity.
\begin{mylem}{\rm(Lemma 13.11 in \cite{Dem12})}

Let $U'_j \subset \subset U''_j$ be locally finite open coverings of a (not necessarily compact) complex manifold $X$ by relatively compact open sets, 
and let $\theta_j$ be smooth non-negative functions with support in $U''_j$,
such that $\theta_j \leq 1$ on $U''_j$ and $\theta_j= 1$ on $U'_j$. Let $A_j\geq 0$ be such that
$$i(\theta_j \d \dbar \theta_j-\d \theta_j \wedge \dbar \theta_j) \geq -A_j \omega$$
on $U''_j \setminus U'_j$ for some positive (1,1)-form $\omega$. 
Finally, let $w_j$ be almost psh functions on $U_j$ with the property that
$i\d \dbar w_j\geq \gamma$ for some real(1,1)-form $\gamma$ on $M$, and let $C_j$ be constants such that
$$w_j(x) \le C_j+   \sup_{k\neq j,x  \in U'_k} w_k(x)  $$ on $U''_j \setminus U'_j$.

Then the function $w:= \log(\sum \theta^2_j e^{w_j})$ is almost psh and satisfies
$$i \d \dbar w\geq \gamma-2(\sum_j \mathbbm{1}_{U''_j\setminus U'_j}A_j e^{C_j})\omega.$$
\end{mylem}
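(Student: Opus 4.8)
The plan is to deduce the estimate from the elementary convexity of the ``log--sum--exp'' function, which bypasses the brute--force expansion of $i\d\dbar w$. Set $\psi_j:=2\log\theta_j+w_j$ on the open set $\{\theta_j>0\}$ (and $\psi_j:=-\infty$ elsewhere); since the covering is locally finite one has
$$w=\log\Big(\sum_j\theta_j^2 e^{w_j}\Big)=\log\Big(\sum_j e^{\psi_j}\Big)=:F\big((\psi_j)_j\big).$$
First I would record that $w$ makes sense as a quasi-psh function: it is upper semicontinuous, locally bounded above since each $w_j$ is, and not identically $-\infty$ because at each point some $\theta_k\equiv 1$ on a neighbourhood while $w_k$ is locally integrable; its quasi-plurisubharmonicity will come out of the curvature bound below.

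The computational core is the identity
$$\theta_j^2\, i\d\dbar\log\theta_j \;=\; i\big(\theta_j\,\d\dbar\theta_j-\d\theta_j\wedge\dbar\theta_j\big),$$
valid where $\theta_j>0$ and extending by $0$ across $\{\theta_j=0\}$ since $\d\theta_j$ and $\d\dbar\theta_j$ vanish there ($\theta_j$ being locally constant on $U'_j$ and on $X\setminus\overline{U''_j}$); this is precisely the combination occurring in the hypothesis, which explains why it is the natural one. Next, $F(t_1,t_2,\dots)=\log\sum_j e^{t_j}$ is smooth, increasing and convex, with $\partial F/\partial t_j=\lambda_j:=e^{t_j}/\sum_l e^{t_l}$ and Hessian $\big(\delta_{jk}\lambda_j-\lambda_j\lambda_k\big)\succeq 0$. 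A standard regularization argument — approximate each $w_j$ by a decreasing sequence of smooth functions with $i\d\dbar w_j^\nu\ge\gamma-\varepsilon_\nu\omega$, apply the smooth chain rule for $\d\dbar(F\circ\psi)$, and pass to the weak limit of currents — then yields
$$i\d\dbar w \;\ge\; \sum_j\lambda_j\, i\d\dbar\psi_j \;=\; \sum_j\lambda_j\big(2\,i\d\dbar\log\theta_j+i\d\dbar w_j\big),\qquad \lambda_j=\frac{\theta_j^2 e^{w_j}}{\sum_l\theta_l^2 e^{w_l}}.$$
Using $i\d\dbar w_j\ge\gamma$ and $\sum_j\lambda_j=1$ this becomes $i\d\dbar w\ge\gamma+2\sum_j\lambda_j\,i\d\dbar\log\theta_j$, and the identity above rewrites
$$\lambda_j\, i\d\dbar\log\theta_j \;=\; \frac{e^{w_j}}{\sum_l\theta_l^2 e^{w_l}}\; i\big(\theta_j\,\d\dbar\theta_j-\d\theta_j\wedge\dbar\theta_j\big)\;\ge\; -\,\frac{e^{w_j}}{\sum_l\theta_l^2 e^{w_l}}\,A_j\,\mathbbm{1}_{U''_j\setminus U'_j}\,\omega,$$
the coefficient being supported on the collar $U''_j\setminus U'_j$.

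Finally I would estimate the weights on the collars. For $x\in U''_j\setminus U'_j$ the hypothesis provides $k\neq j$ with $x\in U'_k$, hence $\theta_k(x)=1$ and
$$\sum_l\theta_l^2 e^{w_l}\;\ge\; e^{w_k}\;\ge\; e^{w_j-C_j},\qquad\text{so}\qquad \frac{e^{w_j}}{\sum_l\theta_l^2 e^{w_l}}\;\le\; e^{C_j}\quad\text{on }U''_j\setminus U'_j.$$
Putting the pieces together gives $i\d\dbar w\ge\gamma-2\sum_j A_j e^{C_j}\mathbbm{1}_{U''_j\setminus U'_j}\,\omega$, which also proves $w$ is quasi-psh. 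The one delicate point — and the main potential obstacle — is the justification of the chain--rule current inequality $i\d\dbar F((\psi_j))\ge\sum_j\lambda_j\, i\d\dbar\psi_j$ when the $w_j$ are only quasi-psh, so that the $\psi_j$ may hit $-\infty$: one must run the smooth approximation while keeping the weights $\lambda_j^\nu$ controlled, and it is important that $\lambda_j\le e^{w_j}/\sum_l\theta_l^2 e^{w_l}$ holds pointwise (because $\theta_j\le 1$), so that the collar bound is stable under the limit.
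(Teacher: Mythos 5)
The paper does not prove this lemma; it simply imports it as Lemma 13.11 from Demailly's \emph{Analytic Methods in Algebraic Geometry}, so there is no in-paper proof to compare yours against. Evaluating it on its own terms: the conceptual scheme is sound and elegant. The identity $\theta_j^2\,i\d\dbar\log\theta_j=i(\theta_j\d\dbar\theta_j-\d\theta_j\wedge\dbar\theta_j)$, the observation that the log-sum-exp map has positive semi-definite Hessian $(\delta_{jk}\lambda_j-\lambda_j\lambda_k)$, and the collar estimate $e^{w_j}/\sum_l\theta_l^2e^{w_l}\le e^{C_j}$ obtained from some $k\neq j$ with $\theta_k(x)=1$ are all correct and exactly the right moves. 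In the case of \emph{smooth} $w_j$ the chain rule you invoke does give $i\d\dbar w\ge\sum_j\lambda_j\,i\d\dbar\psi_j$ and the rest is pure bookkeeping. This is essentially Demailly's computation repackaged through the convexity of $\log\sum e^{t_j}$; the content is the same, stated more transparently.

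There is, however, a genuine gap in the passage to almost psh $w_j$, and you identify but do not close it. Two concrete problems arise. First, the intermediate inequality $i\d\dbar w\ge\sum_j\lambda_j\,i\d\dbar w_j+\cdots$ involves the products $\lambda_j\,i\d\dbar w_j$ of an $L^\infty$ function with a $(1,1)$-current; these are not a priori well-defined currents, so the displayed inequality is not yet well-posed and cannot simply be ``passed to the limit''. Second, and more seriously, the final estimate needs the collar weight $e^{w_j^\nu}/\sum_l\theta_l^2e^{w_l^\nu}$ to be bounded by $e^{C_j^\nu}$ with $C_j^\nu\to C_j$ \emph{uniformly in $\nu$}; but a decreasing regularization $w_j^\nu\searrow w_j$ does not preserve the hypothesis $w_j\le C_j+\sup_{k\neq j,\,x\in U'_k}w_k$, because convolution does not commute with the variable-index supremum: one has $\big(\sup_k w_k\big)*\rho_\nu\ge\sup_k(w_k*\rho_\nu)$, which is the wrong direction. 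Without a uniform bound on the collar weights, the weak limit of the collar term is controlled only via Fatou in the wrong direction, and the pointwise bound $e^{w_j}/\sum_l\theta_l^2e^{w_l}\le e^{C_j}$ on the limit (which you correctly note) does not by itself yield dominated convergence for the approximating sequence. Your closing remark that $\lambda_j\le e^{w_j}/\sum_l\theta_l^2e^{w_l}$ makes the collar bound ``stable under the limit'' is therefore not justified as written. To repair this one must either regularize in a way that also respects the ordering hypothesis (e.g.\ mollify the auxiliary almost psh function $\max_k(w_k+\text{cutoff})$ coherently with $w_j$, on slightly shrunk collars, and absorb an $o(1)$ into the constants), or, as Demailly does, run the computation directly at the level of distributions using that almost psh functions have gradients in $L^2_{\mathrm{loc}}$ so that the cross terms are integrable, bypassing the regularization altogether.
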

Now we give a generalisation of Theorem 1.12 of \cite{DPS94}.
\begin{myprop}{\it
The following properties are equivalent:

(1) $E$ is strongly psef 

(2) There exists a sequence of quasi-psh functions $w_m(x, \xi)=\log( |\xi|_{h_m})$ with analytic singularities induced from Hermitian metrics $h_m$ on $S^m E^*$ 
such that
the singularity locus projects into a proper Zariski closed set $ Z_m$,
and 
$$i \d \dbar w_m \ge -m\varepsilon_m p^*\omega $$ in the sense of currents
with $\lim \varepsilon_m = 0$.
Here $p: S^m E^* \to X$ is the projection.

(3) There exists a sequence of quasi-psh functions $w_m(x, \xi)=\log( |\xi|_{h_m})$ with analytic singularities induced from Hermitian metrics $h_m$ on $S^m E^*$,
such that the singularity locus projects into a proper Zariski closed set $ Z_m$,
and 
$$i\Theta_{S^mE^*,h_m} \leq m\varepsilon_m \omega \otimes \Id$$ 
on $X \setminus Z_m$ in the sense of Griffiths
with $\lim \varepsilon_m = 0$.}

\end{myprop}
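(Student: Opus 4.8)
The plan is to prove $(2)\Leftrightarrow(3)$ by a pointwise computation, $(2)\Rightarrow(1)$ directly, and $(1)\Rightarrow(2)$ by an $L^{2}$/Bergman construction — only the last needs real work. The key is the dictionary between singular metrics on $\cO_{\P(E)}(m)$ and fibre norms on $S^{m}E^{*}$: the tautological inclusion $\cO_{\P(E)}(-m)\subset\pi^{*}S^{m}E^{*}$, with fibre $\ell^{\otimes m}\subset S^{m}E^{*}_{x}$ over a hyperplane $H\subset E_{x}$ (i.e. a line $\ell\subset E^{*}_{x}$), turns a Hermitian metric $h_{m}$ on $S^{m}E^{*}$ into one on $\cO_{\P(E)}(m)$, and a log-homogeneous degree-one weight $w_{m}$ on $\mathrm{Tot}(S^{m}E^{*})$ descends, after an $m$-th root, to a singular metric on $\cO_{\P(E)}(1)$. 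For $(2)\Leftrightarrow(3)$: at a point $(x_{0},\xi_{0})$ where $h_{m}$ is smooth, in a holomorphic frame of $S^{m}E^{*}$ normal at $x_{0}$, a direct differentiation shows that the part of $i\d\dbar(2w_{m})=i\d\dbar\log|\xi|^{2}_{h_{m}}$ horizontal for that frame equals $-\langle i\Theta_{S^{m}E^{*},h_{m}}\,\xi_{0},\xi_{0}\rangle/|\xi_{0}|^{2}_{h_{m}}$, the vertical part is a non-negative Fubini--Study-type form, and there is no mixed part; since $p^{*}\omega$ is horizontal, $i\d\dbar w_{m}\ge-m\varepsilon_{m}p^{*}\omega$ is then equivalent, up to a constant factor on $\varepsilon_{m}$, to $\langle i\Theta_{S^{m}E^{*},h_{m}}\,\xi,\xi\rangle\le m\varepsilon_{m}|\xi|^{2}_{h_{m}}\,\omega$ for all $\xi$, that is, to $i\Theta_{S^{m}E^{*},h_{m}}\le m\varepsilon_{m}\,\omega\otimes\Id$ in the Griffiths sense — condition $(3)$; the non-negative mass of the current $i\d\dbar w_{m}$ along $\Sing(h_{m})$ is harmless since $w_{m}$ has analytic singularities. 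For $(2)\Rightarrow(1)$: $w_{m}$ descends via this inclusion to a singular metric $h^{\sharp}_{m}$ on $\cO_{\P(E)}(1)$; for a local frame $\sigma$ of $\cO_{\P(E)}(-m)$ viewed as a section of $\mathrm{Tot}(S^{m}E^{*})$ with $p\circ\sigma=\pi$, the weight of $\cO_{\P(E)}(m)$ is $2w_{m}\circ\sigma$, so $m\cdot i\Theta(h^{\sharp}_{m})=2\sigma^{*}(i\d\dbar w_{m})\ge-2m\varepsilon_{m}\pi^{*}\omega$ and $\Sing(h^{\sharp}_{m})$ projects into $Z_{m}\subsetneq X$; letting $m\to\infty$ shows $\cO_{\P(E)}(1)$ is strongly psef.

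For $(1)\Rightarrow(2)$ I would carry out the construction of \cite[Theorem~4.1]{Dem92} — in its compact K\"ahler form, which is precisely why the H\"ormander estimate and the collaring lemma were recalled above — for $\cO_{\P(E)}(1)$ on $\P(E)$, and then push forward to $S^{m}E^{*}$. Given $m$, use the definition of strong psef with $\varepsilon=1/m$ to obtain $h_{1/m}$ on $\cO_{\P(E)}(1)$ with analytic singularities, $i\Theta(\cO_{\P(E)}(1),h_{1/m})\ge-\tfrac1m\pi^{*}\omega$ and $Z_{m}:=\pi(\Sing(h_{1/m}))\subsetneq X$ proper. Over a fixed cover $\{U_{j}\}$ of $X$ by coordinate balls trivialising $E$, equip $\cO_{\P(E)}(m)|_{\pi^{-1}(U_{j})}$ with $h_{1/m}^{m-m_{1}}\otimes h_{0}^{m_{1}}$ — $h_{0}$ a fixed smooth metric on $\cO_{\P(E)}(1)$ that is Fubini--Study on each fibre, $m_{1}$ fixed — twisted by a fixed quadratic local weight, so that its curvature becomes strictly positive; then the H\"ormander estimate, used with an auxiliary logarithmic pole to prescribe values, produces holomorphic sections $(s_{j,i})_{i}$ of $\cO_{\P(E)}(m)$ over $\pi^{-1}(U_{j})$ that are $L^{2}$-controlled by $h_{1/m}^{m}$ and generate $\cO_{\P(E)}(m)$ fibrewise over $U_{j}\setminus Z_{m}$. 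Via $\pi_{*}\cO_{\P(E)}(m)\cong S^{m}E$, pairing the $s_{j,i}$ against the total-space coordinate of $S^{m}E^{*}$ yields Hermitian metrics $h^{(j)}_{m}$ on $S^{m}E^{*}$ over $U_{j}\setminus Z_{m}$, each with Griffiths curvature $\le0$ (pull-backs of flat metrics). Gluing the psh weights $\log|\xi|^{2}_{h^{(j)}_{m}}$ by the collaring lemma produces $h_{m}=\sum_{j}\theta_{j}^{2}h^{(j)}_{m}$, and $w_{m}:=\log|\xi|_{h_{m}}$ has analytic singularities projecting into $Z_{m}$ and (if the gluing error is uniform in $m$) satisfies $i\d\dbar w_{m}\ge-C\,p^{*}\omega$ with $C$ independent of $m$ — that is, $(2)$ with $\varepsilon_{m}:=C/m\to0$.

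I expect the delicate point to be exactly that parenthetical: making the collaring error — equivalently, the comparison of the local Bergman metrics on the overlaps — uniform in $m$, so that division by $m$ kills it. This forces one to keep the $h_{1/m}$-dependent part of the local weights identical from chart to chart and to control the overlap comparison of the local Bergman kernels independently of $m$, which is the same quantitative heart as the compact K\"ahler case of Demailly's regularisation, now carrying the extra bookkeeping of the projection $\pi$ and of the passage from $\cO_{\P(E)}(m)$ to $S^{m}E^{*}$. A bare appeal to the regularisation theorem does not suffice, because the approximating objects must be honest fibre norms on $S^{m}E^{*}$ carrying a Griffiths bound.
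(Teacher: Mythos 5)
Your overall strategy is essentially the paper's: $(2)\Leftrightarrow(3)$ via the pointwise Griffiths dictionary (Lemma~4.4 of \cite{Dem92}), $(2)\Rightarrow(1)$ by descending the fibre norm on $S^m E^*$ to a metric on $\cO_{\P(E)}(m)$ along the tautological inclusion (which is the same as restricting along the Veronese embedding $\P(E)\hookrightarrow\P(S^m E)$, as the paper phrases it), and $(1)\Rightarrow(2)$ by local Bergman kernels built from sections of $\cO_{\P(E)}(m)$ over $\pi^{-1}(U_j)$, compared on overlaps by H\"ormander $L^2$ estimates and glued by the collaring lemma. But the point you explicitly leave open --- making the overlap comparison uniform enough in $m$ that the gluing loss becomes negligible --- is the entire content of the implication $(1)\Rightarrow(2)$, and your sketch does not close it. The paper closes it by inserting into each local weight the extra $m$-dependent term $\sqrt m\,(r_j'^2-|z_j|^2)$, i.e.\ gluing $w_j=2m\tilde v_j+\sqrt m(r_j'^2-|z_j|^2)+\log\sum_l|\sigma_{j,l}(x)\cdot\xi|^2$, and proving separately (Lemma~3, by H\"ormander estimates with an auxiliary logarithmic pole $n\log|z^j-z^j(x)|$) that the bare Bergman potentials differ on overlaps only by $(2n+2)\log m + C_{j,k}$ with $C_{j,k}$ independent of $m$. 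Because the $\sqrt m$ term is $\le0$ on $U_j''\setminus U_j'$ and $\ge c\sqrt m>0$ on $U_k'$, the $O(\log m)$ discrepancy is absorbed for $m$ large, $w_j\le w_k$ there, and the collaring constants $C_j$ can be taken equal to $0$; the remaining loss $2\sum_j\mathbbm{1}_{U_j''\setminus U_j'}A_j\omega$ plus the $\sqrt m\,\omega_{\rm eucl}$ contribution are then $\le m\varepsilon\omega$ for $m\gg1$. A bare appeal to the regularisation theorem indeed does not suffice, as you say, but neither does a bare appeal to ``the same quantitative heart'' --- that heart must be, and is, supplied by the $\sqrt m$ trick together with Lemma~3.

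Two smaller inaccuracies. For $(3)\Rightarrow(2)$, ``the non-negative mass along $\Sing(h_m)$ is harmless since $w_m$ has analytic singularities'' is not automatic: the paper first applies the Skoda--El Mir extension theorem to obtain $\mathbbm{1}_V\,i\d\dbar w_m\ge-m\varepsilon_m p^*\omega$ on the dense open $V$ where $h_m$ is smooth, and then the support theorem for the normal current $i\d\dbar w_m$ to conclude that $\mathbbm{1}_{V^c}\,i\d\dbar w_m$ is a nonnegative combination of integration currents, which only improves the lower bound. And the final arithmetic ``$\varepsilon_m:=C/m$'' is too optimistic: the $\sqrt m$ gluing weight costs a term of size $\sqrt m\,\omega$, so one cannot do better than $\varepsilon_m\sim m^{-1/2}$. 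The paper avoids committing to a rate: it fixes $\varepsilon$, establishes $i\d\dbar w_m\ge mp^*(\gamma-4\varepsilon\omega)$ for all $m\ge m_0''(\varepsilon)$, and then extracts a diagonal sequence with $\varepsilon_m\to0$.
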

\begin{proof}
Note that when a metric over $F$ a vector bundle over $X$ is smooth near a point $x$, we have the following equivalence (cf.\ Lemma 4.4 in \cite{Dem92}):
for any real $(1,1)$ form $\gamma$ near $x$, over a neighbourhood $U$ near $x$ 
\begin{enumerate}
\item $i \Theta(F) \geq \gamma \otimes \Id_F$ in the sense of Griffiths;
\item $-i \Theta(F^*) \geq \gamma \otimes \Id_F$ in the sense of Griffiths;
\item $\frac{i}{2 \pi} \d \dbar \log|\xi|^2 \geq p^* \gamma,$~~$\xi \in F^*$,
where $\log|\xi|^2$ is seen as a function on $p^{-1}(U)$ and $p: F^* \to X$
is the projection.
\end{enumerate}
In particular, (2) implies (3) by this observation.

The more substantial part of the proof consists of showing that (1) implies (2).
The proof follows closely the proof of Theorem 4.1 in \cite{Dem92}.

It is enough to show that for any $\varepsilon >0$, 
there exists a sequence of quasi-psh functions $w_m(x, \xi)=\log( |\xi|_{h_m})$ with analytic singularities induced from Hermitian metrics $h_m$ on $S^m E^*$,
such that
the singularity locus projects into a proper Zariski closed set $ Z_m$,
and 
$$i \d \dbar w_m \ge -m\varepsilon p^*\omega $$ in the sense of currents.
Here $p: S^m E^* \to X$ is the projection.

We construct the metrics on the symmetric powers of vector bundles, starting from a singular metric $h_{\varepsilon}$ on ${\cO}_{\P(E)}(1)$ given in the definition of strongly psef vector bundle. Namely, we start with a singular metric
such that
the singularity locus projects into a proper Zariski closed set $ Z$,
and 
$$\frac{i}{2 \pi}\Theta_{{\cO}_{\P(E)}(1)} \ge -\varepsilon \pi^* \omega.$$
Since $X$ is compact, we can select a finite covering $(W_\nu)$ of $X$ with open coordinate charts. For any $\delta >0$, we take in each $W_\nu$ a maximal family of points with (coordinate) distance to the boundary${}>3\delta$ and mutual distance $>\delta/2$. In this way, we get for any $\delta >0$ small enough a finite covering of $X$ by open balls $U'_j$ of radius $\delta$ (actually every point is even at distance $\le \delta/2$ of one of the centres, otherwise the family of points would not be maximal), such that the concentric ball $U_j$ of radius $2\delta$ is relatively compact in the corresponding chart $W_\nu$.

Let $\tau_j:U_j \to B(a_j,2\delta)$ be the isomorphism given by the coordinates of $W_\nu$. 
Let $\varepsilon(\delta)$ be a modulus of continuity for $\gamma:=-\varepsilon\omega$ on the sets $U_j$, such that $\lim_{\delta \to 0} \varepsilon(\delta) = 0$ and $\omega_x-\omega_{x'} \leq \varepsilon(\delta) \omega_x$ for all $x,x'\in U_j$.
We denote by $\gamma_j$ the (1,1)-form with constant coefficients on $B(a_j,2\delta)$ 
such that $\tau_j^* \gamma_j$ coincides with
$\gamma-\varepsilon(\delta) \omega$ at $\tau_j^{-1}(a_j)$.
Then we have
$$0 \le \gamma-\tau_j^* \gamma_j \le 2\varepsilon(\delta) \omega
\leqno(1)
$$
on $U'_j$ for $\delta >0$ small enough. Let $\tilde{v}_j(z_j)$ be the associated quadratic function such that $\gamma_j=\frac{i}{\pi}\d \dbar \tilde{v}_j$. 

Now, we consider the Hilbert space $\mathcal{H}_j(m)$ of holomorphic sections $f \in H^0(\pi^{-1}(U_j),\cO_{\P(E)}(m))$
with the $L^2$ norm
$$\Vert f \Vert^2_j:=\int_{\pi^{-1}(U_j)}|f|^2 e^{2m \tilde{v}_j(z_j)}dV,$$
where $dV$ is a volume element on $\P(E)$ (fixed once for all) and $|f|^2$ is the pointwise norm on $\cO_{\P(E)}(m)$ induced by the given (singular) Hermitian metric $h_\varepsilon$ on $\cO_{\P(E)}(1)$.
It can be viewed as a metric on $\cO_{\P(E)}(1)$, twisted by the local weight $\tilde{v}_j$.
Thus the corresponding curvature form is 
$$\frac{i}{2 \pi} \Theta_{\cO_{\P(E)}(1)}-\frac{i}{\pi} \d \dbar \tilde{v}_j \ge \pi^*(\gamma - \tau_j^*\gamma_j) \ge 0$$
by (1).
Let $U'_j\Subset U''_j\Subset U_j$ be concentric balls such that 
$(U'_j)$ still cover $X$ and let $\theta_j$ be smooth functions with support in $U''_j$, 
such that $0 \leq \theta_j \leq 1$ on $U''_j$ and $\theta_j= 1$ on $U'_j$. 

We define a Bergman kernel type metric on $S^m E^*$ as follows:
 for all $x\in X$ and $\xi \in S^mE^*_x$ we set
 $$\Vert \xi \Vert^2_{(m)}:=\sum_j \theta^2_j(x)\exp(2m \tilde{v}_j(z_j) +\sqrt{m}(r'^2_j-|z_j|^2))\sum_l | \sigma_{j,l}(x) \cdot \xi|^2,\leqno(2)$$
where $r'_j$ is the radius of $U'_j$ and $(\sigma_{j,l})_{l \ge 1}$ is an orthonormal basis of $\mathcal{H}_j(m)$. 
The local sections $\sigma_{j,l}$ can be viewed as sections in $H^0(U_j,S^mE)$, and here $\sigma_{j,l}(x) \cdot \xi$ is computed via the natural pairing between $S^mE$ and $S^mE^*$.
The metric is Hermitian since it is a sum of square of linear forms in~$S^m E^*$.
Since the metric on $\cO_{\P(E)}(1)$ can be singular, the Hermitian metric can also be degenerate.
It is degenerate at a point $x$ if $\sigma_{j,l}(x)=0$ for all $j,l$.

However, the infinite sum $\sum_l | \sigma_{j,l}(x) \cdot \xi|^2$ is smooth.
In fact, the sum converges locally uniformly above every compact subset of $U_j$.
This sum is the square of evaluation linear form 
$$f \mapsto f(x)\cdot \xi$$
which is continuous on $\mathcal{H}_j(m)$.
The reason is as follows.
Given $\sigma$ an element of $H^0(U_j, S^m E)$. It can be identified as an element of $H^0(\pi^{-1}(U_j),\cO_{\P(E)}(m))\cong H^0(U_j, S^m E)$ 
by considering the quotient of $\pi^* \sigma \in H^0(\pi^{-1}(U_j), \pi^* S^mE)$
under the tautological map $\pi^* S^m E \to  \cO_{\P(E)}(m)$.
On the other hand, $\xi \in S^m E^*_x$ can be pulled back to $\P(E)$ as an element of $\cO_{\P(E)}(-m)_{x, [\xi]} \subset \pi^* S^m E^*_{x,[\xi]}$.
The natural pairing between $S^m E^*$ and $S^m E$ of $f(x)$ and $\xi$ is equal to the natural pairing between $\cO_{\P(E)}(-m)_{x, [\xi]}$ and $\cO_{\P(E)}(m)_{x, [\xi]}$ under the above identification.
In particular,
$$|f(x) \cdot \xi| \leq | f|(x, [\xi]) |\xi|_{} (x, [\xi])$$
Here we identify $\cO_{\P(E)}(m)_{x, [\xi]}$ as $\C$ under any local trivialization near $(x, [\xi])$.
The supremum of $| f|(x, [\xi])$ for $f \in \mathcal{H}_j(m)$, $\Vert f \Vert \le 1$ is by definition the norm of the continuous linear function $f \mapsto f(x)$ under the chosen local trivialization near $(x, [\xi])$.
(Remark that in the trivialization, by mean value inequality, the value of the holomoprhic function at the center of a ball is bounded from above by the $L^2$ norm of the function on the ball which is bounded from above by the $L^2$ norm of the section on $\P(E)$ with the singular weight.)
Thus $f \mapsto f(x)\cdot \xi$ is a continuous linear function.
The square of its norm is $\sum_l | \sigma_{j,l}(x) \cdot \xi|^2$ since $\sigma_{j,l}(x) \cdot \xi$ is the $l$-th coordinate in the orthonormal basis $\sigma_{j,l}$ of $\mathcal{H}_j(m)$.
By Montel's theorem, $\sum_{l,k}  \sigma_{j,l}(x) \cdot \xi~\overline{\sigma_{j,l}(w) \cdot \eta}$ is a holomorphic function for $(x,w,\xi,\eta) \in U_j \times \overline{U_j} \times E \times \overline{E}$.
Thus its restriction  $\sum_l | \sigma_{j,l}(x) \cdot \xi|^2$ to the diagonal $U_j \times E$ is a real analytic function.

As a consequence, the metric $\Vert \cdot \Vert_{(m)}$ is a smooth metric, except for the fact that it might degenerate at some points.
To show that this metric has analytic singularities and obtain the curvature estimate, we use Lemma~2 for $w(x, \xi):= \log \Vert \xi \Vert^2_{(m)}$ and
$$w_j(x, \xi )=2m \tilde{v}_j(z^j)+\sqrt{m} (r'^2-|z^j|^2)+\log \sum_l | \sigma_{j,l}(x) \cdot \xi|^2\leqno(3)$$
on the total space $S^m E^*$ covered by $p^{-1}(U'_j)$ where $p: S^m E^* \to X$ is the projection.

To proceed further, we need the following Lemma 3 to compare the behaviour of $w_j$ on different open sets. 
As a consequence of Lemma 3, the functions $w_j(x,\xi)$ satisfy $w_j(x,\xi)\le w_k(x,\xi)$ for any $x\in (U''_j\setminus U'_j) \cap U'_k$ for $m$ large enough.
(Remark that $r'^2_j-|z^j|^2 \leq 0$ and $r'^2_k-|z^k|^2>0$ for such $x$.)
The choice of $m$ depends on the value $r'^2_k-|z^k|^2>0$.
But the function on $U''_j\setminus U'_j$ , $\sup_{k \neq j, x \in U'_k } |a_k-x|$ has a uniform strictly positive lower bound since $U''_j\setminus U'_j$ is compact.
Thus there exists $m_0$ such that for $m \geq m_0$ we have
$$w_j(x) \le   \sup_{k\neq j,x  \in U'_k } w_k(x)  $$ on $U''_j \setminus U'_j$.
We have a curvature estimate
$$\frac{i}{2\pi} \d \dbar w_j \ge mp^*v_j-\sqrt{m} \frac{i}{2\pi} \d \dbar |z^j|^2 \geq mp^*(\gamma- 3 \varepsilon \omega)$$
in the sense of currents,
since $\gamma_j \ge \gamma-2 \varepsilon \omega$ for $m \geq m'_0 \ge m_0$ large enough (independent of $x$).
Then Lemma 2 implies that
$$\frac{i}{2\pi} \d \dbar w \geq mp^*(\gamma- 3 \varepsilon \omega) -p^*\bigg(2\sum_j \mathbbm{1}_{U''_j\setminus U'_j}A_j\omega\bigg).$$
The right side hand is bigger than $mp^*(\gamma- 4 \varepsilon \omega)$ for $m \ge m''_0 \ge m'_0$.

We observe that the metric has analytic singularities. 
By the following Lemma 3, there exist constants $C_{j,k},C'_{j,k}$ such that
$$w_j -C'_{j,k} \leq w_k \leq w_j +C_{j,k}.$$
Note that $w_j$ can be $-\infty$ at some point. 
Thus we have
$$\log\bigg(\sum_j \theta_j^2 e^{C'_{j,k}} e^{w_k}\bigg) \leq w=\log\bigg(\sum_j \theta_j^2 e^{w_j}\bigg) \leq \log\bigg(\sum_j \theta_j^2 e^{C_{j,k}} e^{w_k}\bigg).$$
Without loss of generality, we can assume that $\theta_j$ is a partition of unity, and in particular that $\sum_j \theta_j^2$ is strictly positive on any relative compact set.
Thus $w=w_k +O(1)$ which implies $w$ has analytic singularities along with $w_k$.

Now we show that (2) implies (1).
The sequence of metrics in (2) induces a sequence of Hermitian metrics on $\cO(1)$ over $\P(S^m E)$.
Observe that we have the following commutative diagram given by the Veronese embedding
$$
\begin{matrix}
\cO(m)&\kern-6pt\longrightarrow\kern-6pt & \cO(1)\\
\noalign{\vskip4pt}  
\Big\downarrow&  & \Big\downarrow\\
\P(E)&\kern-6pt\mathop{\longrightarrow}\limits^i\kern-6pt& \P(S^m E).
\end{matrix}
$$
Since the metric is smooth over the pre-image of a dense Zariski open set of $X$.
The restriction of singular metrics is well defined and still has analytic singularities.
Define a sequence of metrics on $\cO_{\P(E)}(1)$ induced from the restricted metrics.
This sequence of metrics is the one required in the definition of a strongly psef vector bundle.

The arguments needed to show that (3) implies (2) are similar. 
By the observation made at the beginning of the proposition, the inequality holds on a dense Zariski open set $V$ where the metric is smooth. The
Skoda-El Mir extension theorem implies that $\mathbbm{1}_V i \d \dbar w_m \geq -m \varepsilon_m p^* \omega$.
Since $w_m$ has analytic singularities, the current $ i \d \dbar w_m $ is normal, and by the support theorem $\mathbbm{1}_{S^m E^* \setminus V}i \d \dbar w_m$ is a sum of closed positive currents obtained by integration on analytic sets with positive coefficients.
Thus the same inequality holds for $i \d \dbar w_m= \mathbbm{1}_V i \d \dbar w_m+\mathbbm{1}_{S^m E^* \setminus V} i \d \dbar w_m$.
\end{proof}
\begin{mylem}
There exist constants $C_{j,k}$ independent of $m$ such that the almost psh functions 
$$\tilde{w}_j(x,\xi) := 2m v_j(z_j) + \log \sum_l |\sigma_{j,l}(x)\cdot \xi|^2, (x,\xi)\in p^{-1}(U''_j) \subset S^mE^*$$
satisfy
on $p^{-1}(U''_j \cap U''_k)$ a bound
$$\tilde{w}_j \le \tilde{w}_k+(2n+2) \log m +C_{j,k}.$$
\end{mylem}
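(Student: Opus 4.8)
The plan is to adapt the estimates in the proof of \cite[Theorem~4.1]{Dem92} to the relative situation over $\P(E)$. The starting point is a reformulation: since the $\sigma_{j,l}$ form an orthonormal basis of the Hilbert space $\mathcal H_j(m)$ with the weighted inner product $\langle f,g\rangle_j=\int_{\pi^{-1}(U_j)}\langle f,g\rangle_{h_\varepsilon^m}\,e^{2m v_j\circ\pi}\,dV$, one has
$$\sum_l|\sigma_{j,l}(x_0)\cdot\xi|^2\ =\ \sup\big\{\,|f(x_0)\cdot\xi|^2\ :\ f\in\mathcal H_j(m),\ \|f\|_j\le1\,\big\},$$
the squared norm of the evaluation functional $f\mapsto f(x_0)\cdot\xi$. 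The strategy is to sandwich this quantity: there should be a quantity $\Lambda(x_0,\xi)\ge0$ depending only on $h_\varepsilon$, on the point $(x_0,\xi)$ and on the fixed reference volume, but \emph{not} on the chart index, such that for an $m$-independent constant $C$
$$C^{-1}\,e^{-2m v_j(x_0)}\,\Lambda(x_0,\xi)\ \le\ \sum_l|\sigma_{j,l}(x_0)\cdot\xi|^2\ \le\ C\,m^{2n+2}\,e^{-2m v_j(x_0)}\,\Lambda(x_0,\xi).$$
The weights $2m v_j$ occurring in $\tilde w_j$ are present precisely so that the factor $e^{-2m v_j(x_0)}$ is cancelled: granting the two inequalities, $\tilde w_j(x_0,\xi)$ lies between $\log\Lambda(x_0,\xi)-\log C$ and $\log\Lambda(x_0,\xi)+(2n+2)\log m+\log C$, and the same holds with $k$ in place of $j$ and the \emph{same} $\Lambda$; subtracting yields $\tilde w_j\le\tilde w_k+(2n+2)\log m+C_{j,k}$ with $C_{j,k}$ independent of $m$.

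For the upper inequality, fix $x_0\in U''_j\cap U''_k$ and $\xi\ne0$ (for $\xi=0$ both sides of the lemma are $-\infty$). Given $f$ with $\|f\|_j\le1$, I would bound $|f(x_0)\cdot\xi|^2$ by the sub-mean-value inequality applied to a genuinely plurisubharmonic function — the modulus squared of a trivialised holomorphic section, after extending $\xi$ holomorphically so that the relevant pairing is a holomorphic function — over a coordinate polydisc $P_m$ centred at the point in question, trivialising $\cO_{\P(E)}(m)$ and of radius $\asymp m^{-1}$; since $U''_j\Subset U_j$ such a polydisc lies in $\pi^{-1}(U_j)$ for $m$ large. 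Because the twisting weight $v_j$ is a fixed quadratic function, its oscillation over $P_m$ is $O(m^{-1})$, so the factor $m$ in front of it contributes only an $O(1)$ error; the logarithmic loss — recorded as $(2n+2)\log m$ in the statement — arises from $\log\operatorname{vol}(P_m)^{-1}$ together with a polynomial bound for the sup of a holomorphic section over the compact fibres in terms of its $L^2$-norm, and its precise exponent plays no role in the sequel. What remains after these estimates is exactly the chart-independent quantity $\Lambda(x_0,\xi)$, the logarithm of the pointwise norm of $\xi$ determined by $h_\varepsilon^m$ and the reference volume.

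For the lower inequality I would invoke the Ohsawa--Takegoshi $L^2$ extension theorem to produce, for each $(x_0,\xi)$, a section $g\in\mathcal H_j(m)$ with $g(x_0)\cdot\xi\ne0$ and $\|g\|_j^2$ bounded by an $m$-independent constant times $e^{2m v_j(x_0)}\Lambda(x_0,\xi)$; then $\sum_l|\sigma_{j,l}(x_0)\cdot\xi|^2\ge|g(x_0)\cdot\xi|^2/\|g\|_j^2$ gives the desired lower bound with no logarithmic loss. The extension theorem applies here because $\cO_{\P(E)}(m)$ with the (possibly singular) metric $h_\varepsilon^m e^{2m v_j\circ\pi}$ has semipositive curvature on $\pi^{-1}(U_j)$ — this is exactly how the forms $\gamma_j$, and hence the weights $v_j$, were chosen in Proposition~1 — once one carries out the standard perturbation $h_\varepsilon\rightsquigarrow h_\varepsilon^{1-\eta}h_0^{\eta}$ that restores strict positivity in the fibre directions using the $\pi$-ampleness of $\cO_{\P(E)}(1)$; alternatively one may integrate along the fibres first and run the extension on the Stein base $U_j$ for the direct image $S^mE=\pi_*\cO_{\P(E)}(m)$, whose induced $L^2$ metric has its curvature controlled by Berndtsson's positivity theorem.

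Two points remain. Over the pluripolar set $\pi(\Sing(h_\varepsilon))$ the estimates above degenerate, but there $\tilde w_j$ and $\tilde w_k$ develop the \emph{same} analytic singularities — both forced by the singularity of the weight of $h_\varepsilon$ through the $L^2$ conditions defining $\mathcal H_j(m)$ and $\mathcal H_k(m)$ — so $\tilde w_j-\tilde w_k$ stays locally bounded and the inequality is inherited by continuity of the regular parts. The step I expect to be the main obstacle is precisely the lower inequality: the total space $\pi^{-1}(U_j)$ is not Stein and the weight of $h_\varepsilon$ may be singular (so that the induced $L^2$ metric on $\pi_*\cO_{\P(E)}(m)$ need not even be finite near $\pi(\Sing(h_\varepsilon))$ and can be very small where $h_\varepsilon$ is positive along the fibres); running Ohsawa--Takegoshi robustly in this setting — either on the total space with the perturbed metric, or on the Stein base with a singular version of Berndtsson's theorem — while keeping the extension constant, and hence $C_{j,k}$, independent of $m$ is the technical heart of the matter, and is exactly the ingredient borrowed from \cite[Theorem~4.1]{Dem92}.
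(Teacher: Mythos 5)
Your proposal identifies the right engine (a Bergman-kernel reformulation plus a positivity-perturbed $L^2$ construction) but is organized differently from the paper, and -- as you yourself flag -- leaves the crucial lower-bound extension undone. The paper never introduces a chart-free intermediate quantity $\Lambda$: it compares the two Bergman kernels directly by a copy-and-correct argument. Take the extremal $h\in\mathcal{H}_k(m)$ with $\|h\|_k=1$ attaining $|h(x_0)\cdot\xi|^2=\sum_l|\sigma_{k,l}(x_0)\cdot\xi|^2$; cut it off by $\chi$ supported in $B(x_0,1/m)$ with $|\d\chi|\le m$; correct it to a holomorphic $h'\in\mathcal{H}_j(m)$ by solving $\dbar f = h\,\dbar(\chi\circ\pi)$ via H\"ormander's estimate (Lemma~1), using the metric $h_\varepsilon^{\otimes(m-1)}\otimes h_\infty$ twisted by $(m-1)\tilde v_j + \tau$, where $h_\infty$ is a smooth strictly positive Finsler metric on $E|_{U_j}$ -- this is exactly the perturbation you anticipated -- and $\tau = n\log|z^j - z^j(x_0)|$ is a log-pole weight forcing $f$ to vanish along $\pi^{-1}(x_0)$ so that $h'=h$ there. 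The loss $m^{2n+2}$ then comes entirely from the cut-off: $m^2$ from $|\d\chi|^2$ and $m^{2n}$ from $|z^j - z^j(x_0)|^{-2n}$ on the annulus $\{1/2m < |z^j - z^j(x_0)| < 1/m\}$; the passage back to the original weight $h_\varepsilon^m e^{2m v_j}$ is absorbed by $h_\varepsilon \ge C\,h_\infty$ on the relatively compact $U_j$ (which is where analytic singularities are used) and by the $O(1/m)$ oscillation of the quadratic $v_j, v_k$ over $B(x_0,1/m)$.

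The reason the paper's route is preferable is precisely the obstacle you raise. Your lower inequality asks for an Ohsawa--Takegoshi extension from the compact positive-dimensional fibre $\pi^{-1}(x_0)$, and making $\Lambda$ precise -- chart-free, $m$-uniform, and attained up to constants by actual sections of $\mathcal{H}_j(m)$ -- is at least as hard as the original problem. In the paper's argument the reference object is a bona fide section $h\in\mathcal{H}_k(m)$ rather than an abstract $\Lambda$, so no extension theorem from a subvariety is needed at all: only a single $\dbar$-equation on $\pi^{-1}(U_j)$, with the log-pole $\tau$ standing in for the extension property, and the $m$-independence of $C_{j,k}$ is visible at every step of the estimate chain. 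You should either fill in this extension step with a precise choice of $\Lambda$ and a uniform OT constant, or restructure along the paper's lines.
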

\begin{proof}
By construction $E|_{{U}_j} \cong {U}_j \times \C^r$ is trivial over ${U}_j$. 
Define a Hermitian metric $h_\infty$ on $E|_{{U}_j}$ with strict positive curvature by taking 
$$|\xi|^2:=\sum_{\lambda}|\xi_\lambda|^2 e^{-\sum_j |z^j|^2}.$$
The associated curvature form on $(\cO_{\P(E|_{{U}_j})}(1),h_\infty)$ is strictly positive and thus  defines a K\"ahler metric $\omega_j$ on $\pi^{-1}(U_j)$.
In fact, $\Theta_E= \omega_{\rm eucl}\otimes \Id_E$ where $\omega_{\rm eucl}$ is the standard (flat) Hermitian metric on $U_j$.
By a standard formula (cf.\ formula (15.15) in Chap V of \cite{agbook}), the curvature of $(\cO_{\P(E|_{{U}_j})}(1),h_\infty)$ is equal to the direct sum of the Euclidean metric of $U_j$ and of the Fubini-Study metric of $\P^{r-1}$. In particular, the Ricci curvature of $\omega_j$ is non-negative.
Define $\tau(z) :=n \log|z^j-z^j(x)|$ depending only on the base variables and  possessing a logarithmic pole at $x$. This is a psh function on a neighbourhood of $\pi^{-1}({U_j})$.
Define a singular metric on $\cO_{\P(E|_{{U}_j})}(m)$ as follows.
Twist the metric $h_{\varepsilon}^{\otimes (m-1)} \otimes h_\infty$ by $(m-1)\tilde{v}_j(z^j)+\tau(z^j)$.
The resulting curvature form on $\cO_{\P(E|_{{U}_j})}(m)$ is given by
$$(m-1)\Big(\frac{i}{2 \pi} \Theta_{\cO_{\P(E)}(1)}(h_\varepsilon)-\frac{i}{\pi} \d \dbar \tilde{v}_j\Big)+\omega_j +\frac{i}{\pi} \d \dbar \tau \geq \omega_j$$
by (1).
We consider the Hilbert space $F^{0,q}_j(m)$ of $(0,q)$-forms ($q=0,1$) $f$ on $\pi^{-1}(U_j)$ with values in $\cO_{\P(E)}(m)$, equipped with the $L^2$ norm
$\Vert f \Vert^2_{j,q}=\int_{\pi^{-1}(U_j)}|f|_j^2 dV_j$,
where $dV_j=\omega^{n+r-1}_j/(n+r-1)!$ and where the pointwise norm $|f|_j$ is induced by $\omega_j$ and of the metric defined above on $\cO_{\P(E)}(m)$. 

Now, we apply H\"ormander's $L^2$ estimates for the bundle $-K_X+\cO_{\P(E)}(m)$ and an arbitrary $(0,1)$ form $g$ in $F_j^{0,1}(m)$ with $\dbar g=0$, (i.e.\ a $\dbar$-closed $L^2$ $(n,1)$-form valued in $-K_X+\cO_{\P(E)}(m)$). We conclude that there exists a $(0,0)$-form in $F_j^{0,0}(m)$ such that $\dbar f=g$ and $\Vert f \Vert_{j,0} \le \Vert g \Vert_{j,1}$. (Note that $\Ric(\omega_j) \ge 0$.)

It remains to choose a suitable section $g$ to prove the inequality. 
Fix a point $x \in U''_j \cap U''_k$ and $\xi \in S^mE^*_x$. There exists $h \in  \mathcal{H}_k(m)$ with $\Vert h \Vert_k= 1$ such that
$$|h(x)\cdot \xi|^2=\sum_l |\sigma_{k,l}(x)\cdot \xi|^2.$$
If the right rank side is 0, we can take $h$ to be any element in the orthonormal basis. Otherwise, the linear functional $f \mapsto f(x)\cdot \xi$ is a non zero functional whose kernel defines a closed hypersurface in~$\mathcal{H}_k(m)$. 
Thus there exists $h \in  \mathcal{H}_k(m)$ with $\Vert h \Vert_k= 1$ which is orthogonal to the kernel.
It is easy to see that such a point $h$ is a maximum of the function $\mathcal{H}_k(m) \setminus 0 \to \R$: 
$$v \mapsto  \frac{|v \cdot \xi|}{\Vert v \Vert^2},$$
and hence we have the equality.
Let $\chi$ be a cut-off function with support in the (coordinate) ball $B(x,1/m)$, equal to 1 on $B(x,1/2m)$ and with $|\d \chi| \le m$.
For $m\ge m_0$ large enough (independent of $x \in U''_j \cap U''_k$) we have $B(x,1/m) \subset U_j \cap U_k$.
We consider the solution of the equation $\dbar f=h\dbar(\chi \circ \pi)$
on $\pi^{-1}(U_j)$. We then get a holomorphic section
$$h':= h (\chi \circ \pi)-f \in H^0(\pi^{-1}(U_j),\cO_{\P(E)}(m)).$$
The section $h'$ coincide with $h$ over $\pi^{-1}(x)$, since the Lelong number of the local weight at a point in $\pi^{-1}(x)$ is at least that of the local weight of $\tau$ which is $n$.
The fact that the section $f$ is in $L^2$ implies that it has to vanish along $\pi^{-1}(x)$. On the other hand, we have

\begin{align*}
\Vert h \dbar(\chi \circ \pi) \Vert_{j,1}^2 &\leq m^2 \int_{\pi^{-1} (B(x,1/m) \setminus B(x,1/2m))} \frac{|h|^2_{h_{\varepsilon}^{\otimes (m-1)} \otimes h_\infty} e^{2(m-1)\tilde{v}_j(z^j)}}{|z^j-z^j(x)|^{2n}}dV_j \\
& \leq C m^{2n+2} \int_{\pi^{-1} (B(x,1/m) \setminus B(x,1/2m))} |h|^2_{h_{\varepsilon}^{\otimes (m-1)} \otimes h_\infty} e^{2(m-1)\tilde{v}_j(z^j)}dV_j
\\ &\leq C m^{2n+2} \int_{\pi^{-1} (B(x,1/m) )} |h|^2_{h_{\varepsilon}^{\otimes (m-1)} \otimes h_\infty} e^{2(m-1)\tilde{v}_j(z^j)}dV_j
\\& \leq C m^{2n+2} \int_{\pi^{-1} (B(x,1/m) )} |h|^2_{h_{\varepsilon}^{\otimes m} } e^{2(m-1)\tilde{v}_j(z^j)}dV_j
\\&\leq C m^{2n+2}e^{2m(\tilde{v}_j(z^j(x))-\tilde{v}_k(z^k(x)))} \int_{\pi^{-1} (B(x,1/m) )} |h|^2_{h_{\varepsilon}^{\otimes m} } e^{2m\tilde{v}_k(z^k(z^k))}dV_k
\\&\leq Cm^{2n+2}e^{2m(\tilde{v}_j(z^j(x))-\tilde{v}_k(z^k(x)))} \Vert h \Vert_k^2
\end{align*}

All the constants are independent of $x$ and $m$.
For the fourth inequality we use the fact that $h_{\varepsilon} \geq C h_\infty$ for some $C$ on $\P(E|_{{U}_j})$, since $h_{\varepsilon}$ has analytic singularities, $h_\infty$ is smooth and the $U_j$'s are relatively compact.
For the fifth inequality, we use the fact that the oscillation of $\tilde{v}_j$ and $\tilde{v}_k$ on $B(x,1/m)$ is $O(1/m)$.
By H\"ormander's $L^2$ estimates we obtain
$$\Vert f \Vert_{j,0}^2 \leq Cm^{2n+2}e^{2m(\tilde{v}_j(z^j(x))-\tilde{v}_k(z^k(x)))} \Vert h \Vert_k^2.$$
Since $\tau \leq 0$ and $h_{\varepsilon} \geq C h_\infty$, we have for some $C$
$$\Vert f \Vert_{j}^2 \leq C \Vert f \Vert_{j,0}^2.$$ 
The norm $\Vert h (\chi \circ \pi)\Vert_j$ satisfies a similar estimate
$$\Vert h (\chi \circ \pi)\Vert_j \leq Cm^2 e^{2m(\tilde{v}_j(z^j(x))-\tilde{v}_k(z^k(x)))} \Vert h \Vert_k^2$$
where $C$ comes from the change of volume form from $dV_j$ to $dV_k$ and the oscillation of $\tilde{v}_j$ and $\tilde{v}_k$ on $B(x,1/m)$.
Thus we have
$$\Vert h' \Vert_j \leq Cm^{2n+2} e^{2m(\tilde{v}_j(z^j(x))-\tilde{v}_k(z^k(x)))},$$ 
\begin{align*}
\sum_l |\sigma_{j,l}(x)\cdot \xi|^2 & \geq C^{-1}m^{-2n-2} e^{-2m(\tilde{v}_j(z^j(x))-\tilde{v}_k(z^k(x)))} |h'(x) \cdot \xi|^2\\
& \geq C^{-1}m^{-2n-2} e^{-2m(\tilde{v}_j(z^j(x))-\tilde{v}_k(z^k(x)))} \sum_l |\sigma_{k,l}(x)\cdot \xi|^2
\end{align*}
since $h'(x) =h(x)$ and $\sum_l |\sigma_{k,l}(x)\cdot \xi|^2=|h(x) \cdot \xi|^2$.
By taking logarithms, we infer the desired inequality.
\end{proof}
\begin{myrem} {\rm
We have formulated the proposition in terms of $E^*$ instead of $E$ for the following reason. According to \cite{BP08} and section 16 of \cite{HPS}, the dual metric of a singular metric of vector bundle is always pointwise well defined.
However the dual metric is not necessarily continuous if the original metric is continuous.
Let us consider a case where the metric has analytic singularities.
Assume that $\log |\xi|_h$ has analytic singularities as a function on the total space $V$ for some vector bundle $(V,h)$ and is the form of $\log \sum |f_i(x) \cdot \xi|^2+ \psi(x)$ with $f_i$ are holomorphic vector bundle sections and $\psi$ is bounded. This is for instance the case for the approximating metrics used in Proposition~1.
The function $\log |\xi^*|_{h^*}$ on the total space $V^*$  is the difference of two real analytic functions modulo bounded terms, on the dense Zariski open set where the metric is smooth. At points where the metric is smooth, we have $\log |\xi|^2_h= \log (\xi^\dagger H(x) \xi)$ for some Hermitian matrix $H(x)$ where $\dagger$ means the Hermitian transpose.
Thus one has
$\log |\xi^*|_{h^*}= \log (\xi^{*\dagger} (H^{-1 }(x)) \xi^*)$ which can be calculated from  the determinant and the adjugate matrix of $H(x)$.
Each component of the adjoint matrix and of the determinant is the product of a bounded function times a real analytic series in the $z^j$'s (coordinates of $x$) and in $\xi$.
Near the singular locus of the metric $h$, both functions can tend to infinity for fixed $\xi^*$. These facts would result in more difficulties to be dealt with.

Here is a concrete example taken from Raufi \cite{Rau}.
Let $E$ be the trivial rank 2 vector bundle over $\C$ where the metric at $z \in \C$ is represented by the matrix
\begin{equation*}
H := 
\begin{pmatrix}
1+|z|^2 & z  \\
\overline{z} & |z|^2  \\
\end{pmatrix}.
\end{equation*}
On $\C^*$, the dual metric can be represented by the matrix
\begin{equation*}
(H^{-1} )^\dagger= \frac{1}{|z|^4}
\begin{pmatrix}
|z|^2 & -z  \\
-\overline{z} & 1+|z|^2  \\
\end{pmatrix}.
\end{equation*}
Thus $\log|\xi^*|_{h^*}=\log( |z \xi^*_2|^2+|\bar{z}\xi_1^*+\xi^*_2|^2)-\log|z|^4$.
At $\xi^*=(1,0)$, $\log|\xi^*|_{h^*}$ is a difference of two functions both tending to infinity  when $z$ tends to 0.}
\end{myrem}
\begin{myrem}{\rm
We can also interpret the inequality $$i\Theta_{S^mE^*,h_m} \le m\varepsilon_m \omega \otimes \Id $$
in the sense of currents as follows: 
for any non-trivial local section $s$ of $S^m E^*$, $m\varepsilon_m \omega+i \d \dbar \log|s|^2_{h_m}$ is a positive current.
The local section can be seen as a map $i$ from an open subset of $X$ to the total space  $S^m E^*$.
If we pull back the current (2) to $U$ via $i$, we see that $m\varepsilon_m \omega+i \d \dbar \log|s|^2_{h_m}$ is a positive current. Here
$|s|_{h_m}$ is not identically zero since it is non vanishing outside of the zero locus of $s$ and of singular locus of $h_m$.

Further discussions of these points can be found in \cite{Paun16}.
The above proposition also answers partially to a question proposed in Remark 2.11 of \cite{Paun16}.
Given a singular Finsler metric with analytic singularities on a vector bundle, one can produce singular Hermitian metrics on high order symmetric tensor products of the given vector bundle, with arbitrary small loss of positivity.}
\end{myrem}
As a direct consequence of the approximation statement, we have the following corollary.
\begin{mycor} {\it
If $E$ is a strongly psef vector bundle of rank $r$ over a compact K\"ahler manifold $(X, \omega)$, then $\det(E)$ is a psef line bundle.}
\end{mycor}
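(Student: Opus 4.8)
The plan is to convert the Hermitian approximations on $S^mE^{*}$ supplied by Proposition~1 into singular metrics on $\det E$ whose negative part of the curvature tends to $0$. Since $E$ is strongly psef, property~(3) of Proposition~1 provides, for every $m$, a singular Hermitian metric $h_m$ on $S^mE^{*}$ with analytic singularities, whose singular locus projects into a proper Zariski closed set $Z_m\subsetneq X$, and such that $i\Theta_{S^mE^{*},h_m}\le m\varepsilon_m\,\omega\otimes\Id$ on $X\setminus Z_m$ in the sense of Griffiths, with $\varepsilon_m\to 0$. Set $N:=\rank S^mE^{*}=\binom{m+r-1}{r-1}$. A computation with Chern roots gives $\det(S^mE)=(\det E)^{\otimes c_m}$ with $c_m:=\tfrac{mN}{r}\in\Z_{>0}$, so $\det h_m$ is a singular metric on $\det(S^mE^{*})=(\det E)^{\otimes(-c_m)}$, and its dual $g_m:=(\det h_m)^{*}$ is a singular metric on $(\det E)^{\otimes c_m}$. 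Over $X\setminus Z_m$ the metric $h_m$ is smooth, and there
\[
i\Theta\big((\det E)^{\otimes c_m},g_m\big)=-\tr\big(i\Theta_{S^mE^{*},h_m}\big)\ \ge\ -Nm\varepsilon_m\,\omega ,
\]
since the fibrewise trace of a Griffiths form $\le m\varepsilon_m\,\omega\otimes\Id$ is $\le Nm\varepsilon_m\,\omega$.

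Next I would extend the weight of $g_m$ across $Z_m$ so as to obtain a singular metric on all of $X$. In a local holomorphic frame $(s_i)_{1\le i\le N}$ of $S^mE^{*}$, that weight equals $\log\det\big(\langle s_i,s_j\rangle_{h_m}\big)$, which Hadamard's inequality for positive semidefinite Hermitian matrices bounds above by $\sum_i\log|s_i|^{2}_{h_m}$; and each $\log|s_i|^{2}_{h_m}$ is quasi-psh on $X$, being the pullback by $s_i$ of the current estimate $i\d\dbar w_m\ge -m\varepsilon_m\,p^{*}\omega$ of Proposition~1(2) (equivalently, by the current reformulation of~(3)). Hence the weight of $g_m$ is locally bounded above near the closed analytic set $Z_m$ and is quasi-psh on $X\setminus Z_m$ with $i\d\dbar(\cdot)\ge -Nm\varepsilon_m\,\omega$ there; the removable-singularity theorem for quasi-psh functions across (pluripolar) analytic sets then extends it to a quasi-psh weight on $X$ with the same bound. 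Taking the $c_m$-th root gives a singular metric $g_m^{1/c_m}$ on $\det E$ with
\[
i\Theta\big(\det E,g_m^{1/c_m}\big)\ \ge\ -\tfrac{Nm\varepsilon_m}{c_m}\,\omega\ =\ -r\varepsilon_m\,\omega .
\]
Since $\varepsilon_m\to 0$, the positive currents $i\Theta\big(\det E,g_m^{1/c_m}\big)+r\varepsilon_m\,\omega$ have uniformly bounded mass and lie in the classes $c_1(\det E)+r\varepsilon_m\{\omega\}$; a weak-$*$ limit point is then a closed positive current in $c_1(\det E)$, so $\det E$ is psef.

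The Chern-root identity $\det(S^mE)=(\det E)^{\otimes mN/r}$ and the fibrewise trace estimate are routine. The one point I expect to require care is the extension of the determinant weight across $Z_m$: one must check that it genuinely is quasi-psh there and that the lower bound $-Nm\varepsilon_m\,\omega$ survives. This is precisely where the analytic-singularities hypothesis is used — through Hadamard's inequality and the current form of Proposition~1 — since for an arbitrary singular Hermitian metric on a vector bundle the induced determinant metric need not have a quasi-psh weight, as Raufi's example above illustrates.
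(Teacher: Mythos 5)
Your proof is correct and follows essentially the same route as the paper's: use Proposition~1 to obtain metrics on $S^mE^*$ with small negative curvature, apply the determinant identity $\det S^mE=(\det E)^{\otimes mN/r}$ and a fibrewise trace estimate to induce metrics on $\det E$ satisfying $i\Theta\ge -r\varepsilon_m\omega$ on $X\setminus Z_m$, extend across $Z_m$, and conclude by weak compactness. The only variation is at the extension step: you deduce the upper bound on the weight of $\det h_m^*$ from Hadamard's inequality together with the quasi-psh character of each $\log|s_i|^2_{h_m}$, whereas the paper simply observes that the Bergman-type metric $h_m$ of Proposition~1 is smooth (though possibly degenerate), so $\log\det H$ is automatically locally bounded above; both versions then finish with the Riemann removable-singularity extension.
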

\begin{proof}
On $X \setminus Z_m$, the curvature inequality
$$i\Theta_{S^mE^*,h_m} \le m\varepsilon_m \omega \otimes \Id$$ 
implies that $i \Theta_{\det S^m E,\det h_m^*} \geq - \rank(S^m E)m \varepsilon_m \omega$.
On the other hand
$$\det S^m E=(\det E)^{\otimes \frac{m \rank(S^m E)}{r}}.$$
Therefore, the induced metric on $\det(E)$ satisfies on $X \setminus Z_m$ the curvature inequality
$$i\Theta_{\det(E)} \geq -r\varepsilon_m \omega.$$
Let us point out that the metric $h_m$ is smooth on $X$ (although it might vanish at some points). The induced metric on $-\det(E)$ is locally bounded.
In other words, the local weight of the dual metric on $\det(E)$ is locally bounded from above.
By the Riemann extension theorem, the curvature inequality holds in the sense of currents throughout $X$, and not only on $X \setminus Z_m$.
By weak compactness, up to taking some subsequence, we get in the limit a closed positive current belonging to the class $c_1(\det E)$. This shows that $\det(E)$ is psef.
\end{proof}
Another direct application of the approximation is the following corollary.
\begin{mycor} {\it
Let $E$ be a vector bundle over a compact K\"ahler manifold $(X, \omega)$.
The following properties are equivalent.

(1) $E$ is strongly psef.

(2) For any $m \in \N^*$, $S^m E$ is strongly psef.

(3) There exists $m \in \N^*$ such that $S^m E$ is strongly psef.}
\end{mycor}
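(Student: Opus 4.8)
The plan is to run the cycle $(1)\Rightarrow(2)\Rightarrow(3)\Rightarrow(1)$, using two elementary functorial properties of strong pseudo-effectivity together with the Veronese embedding already used in the proof of $(2)\Rightarrow(1)$ in Proposition 1. Throughout I will use that $\mathbb{P}(\cdot)$ is the bundle of hyperplanes, so that $\pi^*(\cdot)$ surjects onto the universal quotient $\mathcal{O}_{\mathbb{P}(\cdot)}(1)$, and the elementary fact that restricting a singular metric whose weight has the form $c\log\sum_i|g_i|^2+O(1)$ to a subvariety not contained in its polar set again yields a metric with analytic singularities.

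For the implication $(1)\Rightarrow(2)$ I would first record that strong pseudo-effectivity passes to quotient bundles. If $q\colon E\twoheadrightarrow Q$ is a surjection of holomorphic vector bundles, then $\mathbb{P}(Q)$ is a closed sub-bundle of $\mathbb{P}(E)$ over $X$, with $\mathcal{O}_{\mathbb{P}(E)}(1)$ restricting to $\mathcal{O}_{\mathbb{P}(Q)}(1)$ and with the bundle projection of $\mathbb{P}(E)$ restricting to that of $\mathbb{P}(Q)$. Given, for each $\varepsilon>0$, a singular metric $h_\varepsilon$ on $\mathcal{O}_{\mathbb{P}(E)}(1)$ with analytic singularities, $i\Theta(h_\varepsilon)\geq-\varepsilon\pi^*\omega$ and $\pi(\Sing(h_\varepsilon))\neq X$, the sub-bundle $\mathbb{P}(Q)$ is not contained in $\Sing(h_\varepsilon)$ since it dominates $X$; hence $h_\varepsilon|_{\mathbb{P}(Q)}$ is a genuine singular metric on $\mathcal{O}_{\mathbb{P}(Q)}(1)$ with analytic singularities, with the same curvature lower bound in terms of the pullback of $\omega$, and with projected singular locus contained in $\pi(\Sing(h_\varepsilon))\neq X$, so $Q$ is strongly psef. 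Since $E^{\otimes m}$ is strongly psef whenever $E$ is (stability under tensor products, Section 2) and $S^mE$ is a quotient bundle of $E^{\otimes m}$, it follows that $S^mE$ is strongly psef for every $m\geq1$. One could also read this off Proposition 1 directly: fixing $\varepsilon>0$, that proof produces metrics $h_m$ on $S^mE^*$ with analytic singularities, singular locus projecting into a proper Zariski closed set, and $i\Theta_{S^mE^*,h_m}\leq m\varepsilon\,\omega\otimes\Id$, and $h_{m_0}$ is then exactly a datum exhibiting $S^{m_0}E$ as strongly psef once $\varepsilon\to0$.

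The implication $(2)\Rightarrow(3)$ is immediate. For $(3)\Rightarrow(1)$, assume $S^{m_0}E$ is strongly psef with $m_0\geq2$ (the case $m_0=1$ being trivial). The Veronese embedding provides a closed immersion $v\colon\mathbb{P}(E)\hookrightarrow\mathbb{P}(S^{m_0}E)$ over $X$ with $v^*\mathcal{O}_{\mathbb{P}(S^{m_0}E)}(1)=\mathcal{O}_{\mathbb{P}(E)}(m_0)$, compatible with the projections to $X$ — exactly the diagram appearing in the proof of $(2)\Rightarrow(1)$ in Proposition 1. For each $\varepsilon>0$, pick a singular metric $h_\varepsilon$ on $\mathcal{O}_{\mathbb{P}(S^{m_0}E)}(1)$ with analytic singularities, curvature $\geq-\varepsilon$ times the pullback of $\omega$, and singular locus not projecting onto $X$. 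As $v(\mathbb{P}(E))$ dominates $X$, it is not contained in $\Sing(h_\varepsilon)$, so $v^*h_\varepsilon$ is a singular metric with analytic singularities on $\mathcal{O}_{\mathbb{P}(E)}(m_0)$ with $i\Theta(v^*h_\varepsilon)\geq-\varepsilon\pi^*\omega$ and $\pi(\Sing(v^*h_\varepsilon))\neq X$. Extracting the $m_0$-th root yields a singular metric with analytic singularities on $\mathcal{O}_{\mathbb{P}(E)}(1)$ with $i\Theta\geq-\tfrac{\varepsilon}{m_0}\pi^*\omega\geq-\varepsilon\pi^*\omega$ and the same projected singular locus; letting $\varepsilon\to0$ shows that $E$ is strongly psef.

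The only steps that require genuine care — and hence the main obstacle — are keeping track of the two conditions, ``analytic singularities'' and ``the projected singular locus is a proper Zariski closed subset'', under restriction of a metric to $\mathbb{P}(Q)\subset\mathbb{P}(E)$ and under pullback along the Veronese $\mathbb{P}(E)\hookrightarrow\mathbb{P}(S^{m_0}E)$. In both cases the key observation is that the singular locus does not dominate $X$, so the subvariety onto which we restrict is not absorbed into it; this is exactly what guarantees that the restricted (or pulled back) metric remains well defined with analytic singularities. Root extraction and the resulting curvature inequalities are then purely formal.
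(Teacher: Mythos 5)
Your $(3)\Rightarrow(1)$ via the Veronese embedding and $m_0$-th root is correct and is the paper's own route, and $(2)\Rightarrow(3)$ is trivial. The gap is in $(1)\Rightarrow(2)$. The primary route invokes ``stability under tensor products (Section~2)'' to get $E^{\otimes m}$ strongly psef and then passes to the quotient $S^mE$; but tensor-product stability is Corollary~3(4), whose proof in the paper reads the strong pseudo-effectivity of $S^2(E\oplus F)$ off Corollary~2 -- that is, off the very implication you are trying to establish. The appeal is circular, and there is no independent tensor-product argument available in Section~2 (Finsler metrics on $E^*$ and $F^*$ do not combine to one on $E^*\otimes F^*$ the way Hermitian metrics do).

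The fallback reading of Proposition~1 is also not quite right. For a target tolerance $\varepsilon$, the Bergman-kernel construction in the proof of Proposition~1 only produces metrics on $S^mE^*$ for $m$ beyond a threshold that grows as $\varepsilon\to 0$ (the $m\gamma_j$ term must dominate the $\sqrt m\,\partial\overline{\partial}|z_j|^2$ correction, forcing $m\geq C/\varepsilon^2$), so one cannot freeze $m_0$ and shrink $\varepsilon$. All that condition~(2) of Proposition~1 gives for a fixed $m_0$ is a single metric $h_{m_0}$ with a single curvature bound $-m_0\varepsilon_{m_0}\,\pi^*\omega$ on $\cO_{\P(S^{m_0}E)}(1)$, not a family with lower bounds tending to zero. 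The argument that works -- and that the paper's terse remark about $S^{mp}E$ being a quotient of $S^p(S^mE)$ is gesturing at -- keeps $m$ fixed and sends $p\to\infty$: the multiplication map $S^p(S^mE^*)\to S^{mp}E^*$ induces a morphism $\P(S^mE)\to\P(S^{mp}E)$ over $X$ (explicitly $[\lambda]\mapsto[\lambda^p]$ inside the symmetric algebra of $E^*$, nonzero because that algebra is a domain) pulling $\cO(1)$ back to $\cO(p)$; pulling $h_{mp}$ back along this morphism and extracting a $p$-th root gives metrics on $\cO_{\P(S^mE)}(1)$ with analytic singularities, proper projected singular locus, and curvature lower bound $-m\varepsilon_{mp}\,\pi^*\omega$, which tends to zero as $p\to\infty$.
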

\begin{proof}
(2) implies (3) trivially. (3) implies (1) as in the proof of (2) implying (1) in Proposition 1.
(1) implies (2) is a direct consequence of Proposition 1. All symmetric products $S^{mp} E$ of $ E$ ($p \in N^*$) are quotients of symmetric products of $S^p(S^m E)$.
On the other hand, the induced metric on the quotient bundle of a vector bundle will satisfy similar curvature condition as the original metric as in point (1) the following corollary.  
\end{proof}
As a consequence, one can also define "$\Q-$twisted" strongly psef vector bundles as follows.
\begin{mydef}
Let $(X, \omega)$ be a compact K\"ahler manifold and $E$ a holomorphic vector bundle on $X$ and $D$ be a $\Q-$line bundle. 
Then $E\langle D \rangle$ is said to be $\Q-$twisted strongly pseudo-effective $(\Q-$strongly psef for short$)$ if $S^m E \otimes \cO_X(mD)$ is strongly psef for some (hence any by Corollary 2) $m >0$ such that $\cO_X(mD)$ is a line bundle.
\end{mydef}
As in \cite{DPS94}, one can derive some natural
algebraic properties of strongly psef vector bundles.
\begin{mycor}[{\rm Algebraic properties of strongly psef vector bundles}]\strut {\it
\begin{enumerate}
\item A quotient bundle of a strongly psef vector bundle is strongly psef.
\item A direct summand of strongly psef vector bundles is strongly psef.
\item A direct sum of strongly psef vector bundles is strongly psef.
\item A tensor product $($or Schur functor of positive weight$\,)$
  of strongly psef vector bundles is strongly psef.
\end{enumerate}}
\end{mycor}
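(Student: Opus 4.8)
The plan is to deduce all four statements from Proposition~1 together with the elementary monotonicity of Griffiths curvature under linear algebra operations, proving them in the order $(1)\Rightarrow(2)\Rightarrow(3)\Rightarrow(4)$, so that Corollary~2 (whose proof only invokes item~(1)) becomes available after the first step. For item~(1) I would work directly at the level of $\cO_{\P(E)}(1)$ rather than with metrics on tensor powers, thereby sidestepping the pathologies of Remark~1. A surjection of vector bundles $E\twoheadrightarrow Q$ dualizes to $Q^*\hookrightarrow E^*$ and hence gives a closed embedding $j:\P(Q)\hookrightarrow\P(E)$ of the associated bundles of hyperplanes, with $\pi\circ j=\pi_Q$ and $j^*\cO_{\P(E)}(1)=\cO_{\P(Q)}(1)$. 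Given $\varepsilon>0$ and the singular metric $h_\varepsilon$ on $\cO_{\P(E)}(1)$ furnished by Definition~1, I would restrict it to $\P(Q)$. Since $\pi(\Sing h_\varepsilon)\neq X$, there is an $x$ with $\pi^{-1}(x)\cap\Sing h_\varepsilon=\emptyset$, so the fibre $\P(Q_x)\subset\P(Q)$ avoids $\Sing h_\varepsilon$; in particular $\P(Q)\not\subset\Sing h_\varepsilon$, so $j^*h_\varepsilon$ is a bona fide singular metric, still with analytic singularities (restricting a quasi-psh weight with analytic singularities to a submanifold not contained in its polar locus preserves this property), with $i\Theta(\cO_{\P(Q)}(1),j^*h_\varepsilon)=j^*\,i\Theta(\cO_{\P(E)}(1),h_\varepsilon)\ge -\varepsilon\,\pi_Q^*\omega$ and $\pi_Q(\Sing(j^*h_\varepsilon))\subset\pi(\Sing h_\varepsilon)\neq X$. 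Letting $\varepsilon\to 0$ shows $Q$ is strongly psef. Item~(2) is then immediate: a direct summand $E_1$ of a strongly psef bundle $E_1\oplus E_2$ is a quotient bundle of it via the projection.

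For item~(3), let $E_1,E_2$ be strongly psef. Applying Proposition~1 to each, I get for $i=1,2$ singular metrics $h^{(i)}_k$ on $S^kE_i^*$ with analytic singularities projecting into proper analytic subsets $Z^{(i)}_k\subsetneq X$ and with $i\Theta_{S^kE_i^*,h^{(i)}_k}\le k\varepsilon^{(i)}_k\,\omega\otimes\Id$ off $Z^{(i)}_k$, where $\varepsilon^{(i)}_k\to 0$ (and I set $\varepsilon^{(i)}_0=0$, using the flat metric on $S^0E_i^*=\cO_X$). Using the canonical decomposition $S^N(E_1\oplus E_2)^*=\bigoplus_{a+b=N}S^aE_1^*\otimes S^bE_2^*$, I would equip the $(a,b)$-summand with the tensor-product metric $h^{(1)}_a\otimes h^{(2)}_b$ and take the orthogonal direct sum $g_N$. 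Because Griffiths semi-negativity up to a multiple of $\omega$ is additive under tensor products and passes, as a maximum, to direct sums, $g_N$ satisfies $i\Theta_{g_N}\le\bigl(\max_{a+b=N}(a\varepsilon^{(1)}_a+b\varepsilon^{(2)}_b)\bigr)\omega\otimes\Id$ off the proper analytic subset $\bigcup_{a\le N}Z^{(1)}_a\cup\bigcup_{b\le N}Z^{(2)}_b$, and it has analytic singularities (tensor products and direct sums of metrics with analytic singularities have analytic singularities). A short estimate controls the loss: given $\delta>0$, pick $A$ with $\varepsilon^{(i)}_k<\delta$ for all $k\ge A$, put $C=\max\{k\varepsilon^{(i)}_k : k<A,\ i=1,2\}$, and take $N\ge\max(2A,C/\delta)$; then $a\varepsilon^{(1)}_a+b\varepsilon^{(2)}_b\le 2N\delta$ whenever $a+b=N$. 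Hence for every $\delta>0$ the bundle $S^N(E_1\oplus E_2)^*$ carries such a metric with loss at most $2N\delta$ for all large $N$, and Proposition~1 shows $E_1\oplus E_2$ is strongly psef.

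For item~(4), the key remark is the canonical isomorphism $S^2(E_1\oplus E_2)=S^2E_1\oplus(E_1\otimes E_2)\oplus S^2E_2$, so $E_1\otimes E_2$ is a direct summand of $S^2(E_1\oplus E_2)$. By item~(3), $E_1\oplus E_2$ is strongly psef; by Corollary~2, so is $S^2(E_1\oplus E_2)$; and by item~(2), so is its direct summand $E_1\otimes E_2$. An induction then handles any finite tensor product. Finally, for a Schur functor $S_\lambda$ attached to a partition $\lambda$ of weight $m\ge 1$, the bundle $S_\lambda E$ is a direct summand (in characteristic $0$) of $E^{\otimes m}$, which is strongly psef by the case just proved, so $S_\lambda E$ is strongly psef by item~(2).

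The only genuinely delicate point I anticipate is item~(4). A naive attempt to manufacture metrics on $S^m(E_1\otimes E_2)^*$ directly fails, because that bundle is strictly larger than anything one can assemble from the metrics on the $S^kE_i^*$ (it carries Schur components with more than two rows), and routing through dual or quotient metrics would run into the non-analyticity phenomena of Remark~1. Realizing $E_1\otimes E_2$ as a direct summand of $S^2(E_1\oplus E_2)$ is precisely the device that keeps the whole argument inside the curvature-monotone operations — restriction of the $\cO_{\P(E)}(1)$-metric for quotients, and tensor products and direct sums of metrics — that Proposition~1 handles cleanly; correspondingly, item~(3) is the only step needing honest quantitative estimates, and I expect the degree bookkeeping there to be the fiddliest part.
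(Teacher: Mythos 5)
Your proof is correct, and items (1), (2), (4) follow the paper essentially verbatim: restriction of $\cO_{\P(E)}(1)$ to $\P(Q)\hookrightarrow\P(E)$ for quotients, direct summand as quotient, and the $S^2(E\oplus F)\cong S^2E\oplus(E\otimes F)\oplus S^2F$ trick routed through Corollary~2. Where you diverge genuinely is item~(3). The paper's argument is shorter and stays at the level of $\cO_{\P(\cdot)}(1)$: the singular Hermitian metrics on $\cO_{\P(E)}(1)$ and $\cO_{\P(F)}(1)$ are regarded as Finsler metrics $h_E$, $h_F$ on $E^*$, $F^*$, and the sum $h_E+h_F$ is a Finsler metric with analytic singularities on $E^*\oplus F^*$, i.e.\ a singular Hermitian metric on $\cO_{\P(E\oplus F)}(1)$, for which the curvature bound and the codimension condition on the singular locus are immediate. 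You instead pass through Proposition~1: build metrics on $S^N(E_1\oplus E_2)^*$ from the summands $S^aE_1^*\otimes S^bE_2^*$, and carry out the quantitative bookkeeping ($a\varepsilon^{(1)}_a+b\varepsilon^{(2)}_b\le 2N\delta$ for $N\gg 0$) to verify the $o(N)$ loss of positivity. This is a valid alternative and has the pedagogical merit of staying entirely inside the tensor/direct-sum calculus of Hermitian (rather than Finsler) metrics, but it is strictly more work: you must also check that tensor products and orthogonal sums of the Bergman-type metrics from Proposition~1 still have analytic singularities in the required sense, a point you assert but which deserves a line of justification (it does hold, because the local models $\log\sum_l|\sigma_l(x)\cdot\xi|^2+O(1)$ are stable under these operations). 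The paper's Finsler-sum argument sidesteps all of this in one line. Both proofs are sound.
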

\begin{proof} One can obtain lower bounds of the curvature through
calculations very similar to those of \cite{DPS94}. We first show
that the induced singular metric has analytic singularities.

Assume $E$ to be strongly psef. The surjective bundle morphism $E \to Q$ induces a
closed immersion of $\P(Q)$ into $\P(E)$, and the restriction
of $\cO_{\P(E)}(1)$ to $\P(Q)$ is $\cO_{\P(Q)}(1)$.
The singular metrics on $\cO_{\P(E)}(1)$ prescribed in the definition
of a strongly psef vector bundle induce by restriction singular metrics with
analytic singularities on $\cO_{\P(Q)}(1)$. If we observe that all metrics
involved are smooth over inverse images of non-empty Zariski open sets,
we infer that the restricted metrics are not identically infinite.
This concludes the proof of (1).

(1) implies (2) since a direct summand can be seen as a quotient bundle.
Now, let $E,F$ be two strongly psef vector bundles.
The Hermitian metrics on $\cO_{\P(E)}(1)$ and $\cO_{\P(F)}(1)$ correspond to Finsler metrics on $E^*$ and $F^*$ denoted by $h_E,h_F$. Then $h_E+h_F$ defines
a Finsler metric with analytic singularities on $E^* \oplus F^*$.
It corresponds to a Hermitian metric on $\cO_{\P(E \oplus F)}(1)$, and the
properties required in the definition can easily be checked for $h_E+h_F$ if
they are satisfied for $h_E$ and $h_F$. This concludes the proof of (3).

By Corollary 2 and (3), $S^2(E \oplus F)$ is strongly psef as soon as $E,F$ are.
Since $$S^2(E \oplus F) \cong S^2 E \oplus (E \otimes F) \otimes S^2 F,$$
we infer by (2) that $E \otimes F$ is strongly psef. Finally, the fact that a Schur tensor power is a direct summand of a tensor product implies (4).
\end{proof}
\begin{mycor} {\it
Let
$$0\to S \to E \to Q \to 0$$
be an exact sequence of holomorphic vector bundles.
If $E$ and $(\det(Q))^{-1}$ are strongly psef, then $S$ is strongly psef.}
\end{mycor}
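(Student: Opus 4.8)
The strategy is to exhibit $S$, after a twist by $(\det Q)^{-1}$, as a \emph{quotient} of a vector bundle built functorially out of $E$, and then to conclude by the algebraic properties of strongly psef bundles established in Corollary 3. The point to keep in mind is that $S$ is only a \emph{sub}bundle of $E$, and subbundles of strongly psef bundles need not be strongly psef (already $\cO_{\P^1}(-1)\subset\cO_{\P^1}^{\oplus 2}$ is a counterexample); the hypothesis that $(\det Q)^{-1}$ is strongly psef is precisely what repairs this.

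First I would use the canonical filtration of the exterior power $\Lambda^{q+1}E$ associated with the short exact sequence $0\to S\to E\to Q\to 0$, where $q=\rank Q$. Its graded pieces are $\Lambda^p S\otimes\Lambda^{q+1-p}Q$ for $0\le p\le q+1$. Since $\rank Q=q$, the top piece $\Lambda^0 S\otimes\Lambda^{q+1}Q$ vanishes, so the first nonzero graded quotient of $\Lambda^{q+1}E$ is $\Lambda^1 S\otimes\Lambda^{q}Q=S\otimes\det Q$. This yields a canonical surjection
$$\Lambda^{q+1}E\longrightarrow S\otimes\det Q,$$
equivalently a surjection $\Lambda^{q+1}E\otimes(\det Q)^{-1}\twoheadrightarrow S$. (Alternatively, dualizing the sequence — $Q$ being locally free — gives an epimorphism $E^*\twoheadrightarrow S^*$; applying $\Lambda^{s-1}$ with $s=\rank S$ gives $\Lambda^{s-1}E^*\twoheadrightarrow\Lambda^{s-1}S^*$, and the isomorphisms $\Lambda^{s-1}S^*\cong S\otimes(\det S)^{-1}$, $\Lambda^{s-1}E^*\cong\Lambda^{q+1}E\otimes(\det E)^{-1}$ together with $\det E\cong\det S\otimes\det Q$ recover the same surjection after twisting by $\det S$.)

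Then I would assemble positivity. The bundle $\Lambda^{q+1}E$ is strongly psef, being a Schur functor of positive weight applied to the strongly psef bundle $E$ (Corollary 3(4); or, a direct summand of $E^{\otimes(q+1)}$, which is strongly psef by Corollary 3(4), so Corollary 3(2) applies). Tensoring with the strongly psef line bundle $(\det Q)^{-1}$ keeps it strongly psef (Corollary 3(4) again). Finally $S$, being a quotient of the strongly psef bundle $\Lambda^{q+1}E\otimes(\det Q)^{-1}$, is strongly psef by Corollary 3(1). The degenerate cases are covered by the same formula: if $q=0$ then $S=E$, and if $s=1$ then $\Lambda^{q+1}E\otimes(\det Q)^{-1}=\det E\otimes(\det Q)^{-1}\cong\det S=S$. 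The only real obstacle here is conceptual — recognizing that one should pass from the inclusion $S\hookrightarrow E$ to the surjection $\Lambda^{q+1}E\otimes(\det Q)^{-1}\twoheadrightarrow S$ so as to exploit the twist by $(\det Q)^{-1}$; once that is in place, the conclusion is a formal consequence of Corollary 3.
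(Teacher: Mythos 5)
Your proof is correct and follows essentially the same route as the paper: pass to the surjection $E^*\twoheadrightarrow S^*$, take the $(s-1)$-st exterior power, and untwist by $\det E\cong\det S\otimes\det Q$ to obtain a surjection $\bigwedge^{q+1}E\otimes(\det Q)^{-1}\twoheadrightarrow S$ with $q=\rank Q=r-s$, then apply parts (4) and (1) of Corollary~3. Note that your exponent $q+1=r-s+1$ is the correct one; the paper's text writes $\bigwedge^{r-s-1}E\otimes(\det Q)^{-1}$ here (and again in Proposition~6), which is a misprint, as one sees both from the computation $\bigwedge^{s-1}E^*\otimes\det E\cong\bigwedge^{r-s+1}E$ and from the corrected form $\bigwedge^{r-s+1}\cF\otimes\det\cQ^*\to\cS$ appearing later in the proof of Proposition~9.
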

\begin{proof}
We have $S= \bigwedge^{s-1}S^* \otimes \det S$ where $s$ is the rank of $S$. By dualizing and taking the $s-1$ exterior product, we get a surjective bundle morphism 
$$\bigwedge^{s-1} E^* \to \bigwedge^{s-1} S^*= S \otimes (\det S)^{-1}.$$
On the other hand, we have $\det E \cong  \det S \otimes \det Q$, thus we have a surjective bundle morphism
$$\bigwedge^{r-s-1} E \otimes (\det Q)^{-1} \to S$$
where $r$ is the rank of $E$ by tensoring $\det E$.
By (4) of Corollary 3, $\bigwedge^{r-s-1} E \otimes (\det Q)^{-1}$ is strongly psef.
By (1) of Corollary 3, $S$ is strongly psef.
\end{proof}
\section{Reflexive sheaves}
In this section, we show that a numerically flat reflexive sheaf on a compact K\"ahler manifold is in fact a vector bundle.
We need the following topological lemmata.
\begin{mylem}
Let $X$ be an arbitrary complex manifold $($non necessarily compact$)$ and $E$ be a vector bundle on $X$. 
Let $X_0$ be a Zariski open set in $X$ with $codim(X \setminus X_0)\geq 3$. 
Then the morphism induced by the restriction morphism
$H^1(X, E)\to H^1(X_0, E)$ is surjective.
\end{mylem}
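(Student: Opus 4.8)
The plan is to compare the cohomology of $E$ on $X$ and on $X_0$ by means of cohomology with supports along the closed analytic set $Z:=X\setminus X_0$. Writing $H^j_Z(X,E)$ for the hypercohomology of the complex $R\Gamma_Z(X,E)$, the distinguished triangle $R\Gamma_Z(X,E)\to R\Gamma(X,E)\to R\Gamma(X_0,E)$ gives a long exact sequence
$$\cdots\to H^1_Z(X,E)\to H^1(X,E)\to H^1(X_0,E)\to H^2_Z(X,E)\to\cdots,$$
so it suffices to prove $H^2_Z(X,E)=0$ (and in fact, once we also know $H^1_Z(X,E)=0$, the restriction map will even be an isomorphism, which is more than claimed).

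To get this vanishing I would pass to the local cohomology sheaves $\mathcal{H}^q_Z(E)$, i.e. the sheaves associated to $U\mapsto H^q_{Z\cap U}(U,E)$. The local-to-global spectral sequence $H^p(X,\mathcal{H}^q_Z(E))\Rightarrow H^{p+q}_Z(X,E)$ (equivalently, $R\Gamma_Z(X,E)=R\Gamma(X,R\Gamma_Z(E))$) reduces the problem to showing $\mathcal{H}^q_Z(E)=0$ for $q\le 2$. This is a purely local statement governed by depth: for any coherent sheaf $\mathcal{F}$ on a complex manifold and any closed analytic subset $Z$ one has $\mathcal{H}^q_Z(\mathcal{F})=0$ for all $q<\mathrm{depth}_Z\mathcal{F}$, where $\mathrm{depth}_Z\mathcal{F}:=\inf_{x\in Z}\mathrm{depth}_{I_{Z,x}}(\mathcal{F}_x)$ and $I_{Z,x}\subset\mathcal{O}_{X,x}$ is the ideal of $Z$ at $x$; this is the analytic form of Grothendieck's local cohomology vanishing (see, e.g., the treatments in Banica--Stanasila or Siu--Trautmann).

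It remains to estimate the depth. Since $E$ is locally free, $E_x\cong\mathcal{O}_{X,x}^{\oplus r}$, and a sequence is $\mathcal{O}_{X,x}^{\oplus r}$-regular exactly when it is $\mathcal{O}_{X,x}$-regular, so $\mathrm{depth}_{I_{Z,x}}(E_x)=\mathrm{depth}_{I_{Z,x}}(\mathcal{O}_{X,x})$. But $\mathcal{O}_{X,x}$ is a regular, hence Cohen--Macaulay, local ring, and in a Cohen--Macaulay local ring the grade of any proper ideal equals its height; therefore $\mathrm{depth}_{I_{Z,x}}(\mathcal{O}_{X,x})=\mathrm{ht}(I_{Z,x})=\mathrm{codim}_x Z\ge 3$ for every $x\in Z$. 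Consequently $\mathcal{H}^q_Z(E)=0$ for $q=0,1,2$, hence $H^2_Z(X,E)=0$, and surjectivity of $H^1(X,E)\to H^1(X_0,E)$ follows. Note that compactness of $X$ is never used, since the exact sequence with supports, the spectral sequence, and the depth vanishing are all local/functorial constructions.

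The only substantial point is the local depth computation, namely that for a vector bundle the homological codimension along $Z$ equals the geometric codimension of $Z$; the hypothesis $\mathrm{codim}(X\setminus X_0)\ge 3$ is used precisely to kill $\mathcal{H}^2_Z(E)$. If one prefers to avoid local algebra, the same vanishing can be obtained by hand: $\mathcal{H}^0_Z(E)=0$ because $E$ is torsion-free, $\mathcal{H}^1_Z(E)=0$ is the Riemann extension theorem for sections of a locally free sheaf across analytic sets of codimension $\ge 2$, and one further extension step of the same type is available when $\mathrm{codim}\,Z\ge 3$, giving $\mathcal{H}^2_Z(E)=0$.
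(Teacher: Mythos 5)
Your argument is correct, and it takes a genuinely different route from the paper. You work with cohomology supported on $Z=X\setminus X_0$: the exact sequence $\cdots\to H^1(X,E)\to H^1(X_0,E)\to H^2_Z(X,E)\to\cdots$, the local-to-global spectral sequence $H^p(X,\mathcal{H}^q_Z(E))\Rightarrow H^{p+q}_Z(X,E)$, and the analytic form of Grothendieck's depth-vanishing theorem $\mathcal{H}^q_Z(\mathcal{F})=0$ for $q<\mathrm{depth}_Z\mathcal{F}$, after which the depth estimate for a locally free sheaf on a manifold (regular, hence Cohen--Macaulay, so grade $=$ height $=$ codimension) finishes the job. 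The paper instead proves the key local vanishing $H^1(B\setminus A,\mathcal{O})=0$ for $\mathrm{codim}\,A\geq 3$ by an explicit \v{C}ech/Laurent-expansion computation on a cover of $\mathbb{C}^3\setminus\{0\}$, extends it via K\"unneth and a stratification of $Z$ to the case of a smooth center, and then patches cocycles by hand using Hartogs extension and unique continuation. Both approaches hinge on the same underlying fact, namely $R^1j_*(E|_{X_0})=0$ where $j:X_0\hookrightarrow X$, but you obtain it from the depth inequality (i.e.\ Scheja's theorem) while the paper builds it from scratch. What you buy is brevity, a cleaner statement of exactly which hypothesis kills which obstruction (codimension $\geq 3$ kills $\mathcal{H}^2_Z$), and, at no extra cost, the stronger conclusion that the restriction is an isomorphism since $H^1_Z$ also vanishes; what the paper's proof buys is complete self-containment, avoiding any invocation of local-cohomology machinery. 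One small caveat about your closing remark: the ``further extension step'' yielding $\mathcal{H}^2_Z(E)=0$ is \emph{not} just another application of Riemann/Hartogs extension for sections; it is precisely the vanishing $H^1(B\cap X_0,E)=0$ for small balls $B$, which is the nontrivial content of the Scheja-type theorem (and exactly what the paper's \v{C}ech computation establishes), so if you wanted a genuinely elementary path you would still need that computation or an equivalent.
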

\begin{proof}
We start by proving that
$$H^1(\C^3 \setminus \{(0,0,0)\}, \cO_{\C^3 \setminus \{(0,0,0)\}})=0.$$
It is done by direct calculation. Cover $\C^3 \setminus \{(0,0,0)\}$ by three Stein open sets isomorphic to $\C^* \times \C^2$, say $U_i=\{z_i \neq 0\}$,
with coordinates $(z_0,z_1,z_2)$. A 1-cochain can be identified with
a triple of convergent power series $(f_{01},f_{02},f_{12})$ with $f_{12}$ (say) of type
$$\sum_{(\alpha, \beta, \gamma) \in \Z^2 \times  \N} 
c_{\alpha \beta \gamma }z_0^\alpha z_1^\beta z_2^\gamma$$
over $\C^{*2} \times \C$ (the intersection of two Stein open sets).
Similarly, $f_{02}$ is a sum over $(\alpha, \beta, \gamma) \in \Z \times \N \times \Z$ and $f_{01}$ is a sum over $(\alpha, \beta, \gamma) \in \N \times \Z^2$.

The condition that $(f_{01},f_{02},f_{12})$ is closed means that $f_{01}-f_{02}+f_{12}=0$ on the intersection of the three Stein open sets $U_0\cap U_1\cap U_2$, biholomorphic to $\C^{*3}$.
We can write $f_{01}$ as a sum of three convergent power series $g_{01}^0$, $g_{01}^1$, $g_{01}$ such that $g_{01}$ has only positive power terms, $g_{01}^0$ has only negative power terms in $z_0$ and $g_{01}^1$ has only negative power terms in $z_1$.
Similarly, we decompose $f_{02},f_{12}$.
Now the closeness condition is equivalent to
$$g_{01}-g_{02}+g_{12}=0,\quad
g_{01}^0=g_{02}^0,\quad g_{12}^2=g_{02}^2,\quad
g_{01}^1+g_{12}^1=0.$$
We define a 0-cochain in such a way that its differential is $(f_{01},f_{02},f_{12})$. On $U_0$, resp.\ $U_1$, $U_2$, we take the convergent power series $g_{01}+g_{01}^0$, resp.\ $g_{12}^1$, $-g_{12}-g_{02}^2$.
This implies that every 1-cocycle is exact, hence
$$H^1(\C^3 \setminus \{(0,0,0)\}, \cO_{\C^3 \setminus \{(0,0,0)\}})=0.$$
Now, on every polydisc $D$ in $\C^3$, a holomorphic function is uniquely determined by its Taylor expansion at origin, and the same calculation shows that 
$$H^1(D \setminus \{(0,0,0)\}, \cO_{D \setminus \{(0,0,0)\}})=0.$$
By a similar calculation, we can show that for any polydisc $D$ of dimension at least 3,
$$H^1(D \setminus \{0\}, \cO_{D \setminus \{0\}})=0.$$
By the K\"unneth formula, for $B' \times (B''\setminus \{0\})$ where $B',B''$ are polydiscs with dimension of $B''$ at least~$3$, we have
$H^1(B' \times (B''\setminus \{0\}), \cO_{B' \times (B''\setminus \{0\})})=0.$

We now return to the general case. By the standard lemma below ensuring the existence of stratifications of analytic sets, we can reduce ourselves to the situation where $X \setminus X_0$ is a closed manifold.

Cover $X$ by the Stein open sets $U_\alpha$ and $B_\beta:=B'_\beta \times B''_\beta$ such that $X_0$ is covered by $U_\alpha$ and  $B'_\beta \times (B''_\beta\setminus \{0\})$ where $B'_\beta,B''_\beta$ are polydiscs with dimension of $B''_\beta$ at least 3. 
Assume that $E$ is trivial on $U_\alpha$ and $B_\beta$.
Cover $B'_\beta \times(B''_\beta\setminus \{0\})$ by $B_\beta^\gamma$
($1 \leq \gamma \leq \dim B''_\beta$) such that
each $B_\beta^\gamma$ is isomorphic to a polydisc minus a hyperplane defined as zero set of one coordinate.
Since $U_\alpha$, $B_\beta^\gamma$ are Stein, the cohomology on $X_0$ can be calculated as the \v{C}ech cohomology with respect to this open covering of~$X_0$, which we denote by $\mathcal{V}$. We also denote by $\mathcal{U}$ the open covering of $X$ consisting of the sets $U_\alpha$, $B_\beta$.
Any element $s$ of $H^1(X_0,E)$ can be represented by a family of sections
\begin{eqnarray*}
&&\kern-20pt(s_{\alpha_1, \alpha_2}, s_{\alpha \beta}^\gamma, s_{\beta}^{\gamma_1, \gamma_2},s_{\beta_1, \beta_2}^{\gamma_1, \gamma_2}) \in\\
&&\kern10pt
   \prod \Gamma(U_{\alpha_1} \cap U_{\alpha_2},E) \times  \prod \Gamma(U_\alpha \cap B_\beta^\gamma,E) \times \prod \Gamma(B_\beta^{\gamma_1} \cap B_\beta^{\gamma_2}, E)  \times \prod \Gamma(B_{\beta_1}^{\gamma_1} \cap B_{\beta_2}^{\gamma_2}, E).
\end{eqnarray*}
Since $H^1(B'_\beta \times B''_\beta,E)=0$ by the previous case,
there exists
$$(s_\beta^\gamma) \in \prod \Gamma(B_{\beta}^{\gamma}, E)$$
such that for any $\beta$ fixed
$$s_{\beta}^{\gamma_1, \gamma_2}=(-1)^{\gamma_1+1}s_{\beta}^{\gamma_1}+(-1)^{\gamma_2+1}s_{\beta}^{\gamma_2}.$$
Define a 0-cochain 
$$(s_\beta^\gamma, 0) \in \prod \Gamma(B_{\beta}^{\gamma}, E) \times \prod \Gamma(U_\alpha,E).$$
Then we have $(s_{\alpha_1, \alpha_2}, s_{\alpha \beta}^\gamma, s_{\beta}^{\gamma_1, \gamma_2},s_{\beta_1, \beta_2}^{\gamma_1, \gamma_2})+\delta (-s_\beta^\gamma, 0)$ as another representative of the same cohomology class on $X_0$.
The components in $ \Gamma(B_\beta^{\gamma_1} \cap B_\beta^{\gamma_2}, E)$ are 0 by construction.
Thus we can assume that the components in $ \Gamma(B_\beta^{\gamma_1} \cap B_\beta^{\gamma_2}, E)$ are 0 from the beginning.

Since the representative is closed, the components in 
$\Gamma(B_{\beta}^{\gamma} \cap U_{\alpha}, E)$ glue to a section $s_{ \alpha, \beta}\in \Gamma(B_{\beta}\setminus (B'_\beta \times \{0\}) \cap U_\alpha, E)$ when $\gamma$ varies. By the Hartogs theorem, this section extends across the submanifold $B'_\beta \times \{0\}$, as its codimension is at least 3. The components in
$\Gamma(B_{\beta_1}^{\gamma_1} \cap B_{\beta_2}^{\gamma_2}, E)$ can be glued into a section of $\Gamma(B_{\beta_1}^{\gamma_1}\cap B_{\beta_2}, E)$ when $ \gamma_2$ varies, and into a section of $\Gamma(B_{\beta_1}\cap B_{\beta_2}^{\gamma_2}, E)$ when $ \gamma_1$ varies.
By the unique continuation theorem for holomorphic functions, in fact they define a holomorphic section $s_{\beta_1, \beta_2}$ of $E$ on $B_{\beta_1}\cap B_{\beta_2}$.

We claim that after performing this glueing, the sections
$$(s_{\alpha_1, \alpha_2},s_{\alpha, \beta},s_{\beta_1, \beta_2}) \in \prod \Gamma(U_{\alpha_1} \cap U_{\alpha_2},E) \times  \prod \Gamma(U_\alpha \cap B_\beta,E) \times  \prod \Gamma(B_{\beta_1}^{} \cap B_{\beta_2}^{}, E)$$
define a $1$-cocycle of $X$ with respect to the open covering
$U_\alpha$, $B_\beta$, and that its class in $H^1(X_0,E)$ is exactly $s$.

The reason is as follows.
The image of $(s_{\alpha_1, \alpha_2},s_{\alpha, \beta},s_{\beta_1, \beta_2})$ from $H^1(\mathcal{U},E)$ to $H^1(\mathcal{U} \cap X_0,E)$ is just the restriction of sections.
The covering $\mathcal{V}$ is a refinement of $\mathcal{U} \cap X_0$ given by
the inclusion of open sets:
$U_\alpha \subset U_\alpha$,
$B_\beta^\gamma \subset B_\beta$. 
The image under this refinement of open sets is precisely $s$.
\end{proof}
\begin{mylem}[{\rm Stratification of analytic sets, see e.g.\
Proposition 5.6 in Chap. II of \cite{agbook}}]\strut\\
Let $Z \subset X$ be an analytic subset of dimension $n$.
Then $Z$ admits a stratification $\emptyset =Z_{n+1} \subset \cdots \subset Z_0=Z$ by closed analytic sets $Z_k$ of dimension $n_k > n_{k+1}$ such that $Z_k \setminus Z_{k+1}$ is a closed complex submanifold of dimension $n_k$ of $X \setminus Z_{k+1}$.
\end{mylem}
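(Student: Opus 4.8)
This is the standard decomposition of an analytic set into locally closed smooth strata, obtained by iterating the operation ``replace a set by the singular locus of its top-dimensional part''. The only non-elementary ingredients are the structure theorems for analytic sets: if $A$ is a closed analytic subset of a complex manifold $Y$, then $\Sing(A)$ is a closed analytic subset of $Y$ with $\dim \Sing(A) < \dim A$, the complement $A \setminus \Sing(A)$ is a closed complex submanifold of $Y \setminus \Sing(A)$, and $A$ decomposes into a locally finite family of irreducible components, any subunion of which is again analytic. Granting these, the rest is bookkeeping, and one may simply refer to \cite{agbook}; but here is the construction I would carry out.

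I would argue by induction on $n = \dim Z$, the case $n \le 0$ (where $Z$ is discrete) being immediate with $Z_1 = \emptyset$. For the inductive step, let $W$ be the union of the irreducible components of $Z$ of dimension exactly $n$, and $W'$ the union of the remaining ones; both are closed analytic in $X$, $W$ is nonempty and purely $n$-dimensional, and $\dim W' \le n-1$. Set $Z_1 := \Sing(W) \cup W'$, a closed analytic subset of $X$ of dimension $\le n-1$. Then
$$Z \setminus Z_1 = \big(W \setminus \Sing(W)\big) \setminus W'$$
is an open subset of the purely $n$-dimensional complex manifold $W \setminus \Sing(W)$, hence itself a complex submanifold of pure dimension $n$, and it is closed in $X \setminus Z_1$ because $Z$, being analytic, is closed in $X$. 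Applying the induction hypothesis to the closed analytic set $Z_1$ (of dimension $\le n-1$) produces a stratification $\emptyset \subset \cdots \subset Z_1$ with the stated properties, and prepending $Z_0 = Z$ finishes the construction. Since the dimension drops by at least one at each stage, the chain reaches $\emptyset$ after at most $n+1$ steps, so after relabelling (and, if fewer steps occur, repeating the empty set at the end of the index range) we obtain $\emptyset = Z_{n+1} \subset \cdots \subset Z_0 = Z$ with $n_k = \dim Z_k$ strictly decreasing and $Z_k \setminus Z_{k+1}$ a closed submanifold of $X \setminus Z_{k+1}$ of dimension $n_k$.

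The only point I would check carefully is that the strata have the declared \emph{pure} dimension $n_k$; this is exactly why one splits off the lower-dimensional components $W'$ before forming the singular locus. A regular point of $Z$ of local dimension $< n$ must be discarded even though $Z$ is a manifold there, and such points are separated from the $n$-dimensional regular locus (near an $n$-dimensional manifold point all nearby points of $Z$ are again $n$-dimensional manifold points), so $\Sing(W) \cup W'$ is genuinely closed and analytic and $Z \setminus Z_1$ is genuinely of pure dimension $n$. Beyond this bit of care there is no real obstacle.
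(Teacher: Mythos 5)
The paper does not prove this lemma at all; it simply cites Proposition~5.6 in Chapter~II of \cite{agbook}, so there is no argument of its own to compare with. Your inductive construction --- discard the lower-dimensional irreducible components, remove the singular locus of the pure top-dimensional part, and recurse on the closed analytic remainder of strictly smaller dimension --- is the standard proof and is correct, and the care you take to split off $W'$ before forming the singular locus (so that the top stratum is genuinely of pure dimension $n$) is precisely the point to watch. One small remark on the bookkeeping at the end: if the dimension drops by more than one at some stage, your chain reaches $\emptyset$ in fewer than $n+1$ steps, and padding the tail with repeated copies of $\emptyset$ makes the stated inequality $n_k > n_{k+1}$ fail there. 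That is a looseness in the way the lemma is phrased rather than a defect of your argument; what the paper actually uses (in the reduction step inside the proof of Lemma~4) is only a finite decreasing chain of closed analytic subsets whose successive differences are closed complex submanifolds, and that is exactly what you produce.
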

Let us point out that the result is false if the codimension is equal to~$2$.
For example, the group
$H^1(\C^2 \setminus \{(0,0)\}, \cO_{\C^2 \setminus \{(0,0)\}})$
is infinite dimensional, while $H^1(\C^2 , \cO_{\C^2 })=0$ by Cartan's
theorem~B.

We recall briefly the construction of Chern classes of a coherent sheaf $\cF$ in the de Rham cohomology.
We refer to \cite{GR58} for more details.
If $X$ is connected complex compact manifold (or more generally a Zariski open set $U$ of in $X$), by \cite{Voi}, $\cF$ does not necessarily admit a resolution by holomorphic vector bundles.
On the other hand, a real analytic coherent sheaf does by real analytic vector bundles.
Let 
$$0 \to E^{2n} \to \cdots E^0 \to \cF \otimes_{\cO_X} \cO_X^{\mathrm{an}} \to 0  $$
be a resolution of $\cF \otimes_{\cO_X} \cO_X^{\mathrm{an}} $ by real analytic vector bundles $E^i$ where $\cO_X^{\mathrm{an}}$ is sheaf of real analytic function on $X$ and $n$ is the complex dimension of $X$.
Define the total Chern class of $\cF$ by
$$c_\bullet(\cF):= \prod_i c_\bullet(E^i)^{(-1)^i}.$$
By restriction on $U$, same formula defines $c_\bullet(\cF|_U)$.
It can be check that, this is independent the choice of resolution.
\begin{mylem}
Let $\cF$ be a coherent torsion sheaf over a compact complex manifold $X$ (not necessarily K\"ahler) of dimension $n$.
Assume that $\cF$ is supported in a SNC divisor $E= \cup_i E_i$ where $E_i$ are the irreducible components.
Let $\alpha$ be a smooth closed form over $X$ such that $\alpha|_{E_i}=0$ for any $i$.
Then for any $i < n$,
$$\int_X c_i(\cF) \wedge \alpha^{n-i}=0.$$ 
More generally we have
for any $i < n$ and any cohomology class $\beta$ of $X$,
$$\int_X \ch(\cF) \wedge \beta  \wedge \alpha^{n-i}=0.$$
\end{mylem}
\begin{proof}
Denote for any divisor $D$ (not necessarily irreducible) $G_D(X)$ the Grothendieck group of coherent sheaves over $X$ supported in $D$.
We have exact sequence
$$\oplus_i G_{E_i}(X) \to G_E(X) \to 0.$$ 
Let $(\cF_i) \in \oplus_i G_{D_i}(X)$ be a preimage of $\cF$.
Then we have by construction of Chern character class (cf. \cite{Gri}),
$$\ch(\cF)=\sum_i i_{E_i*} (\ch(\cF_i)\td(N_{E_i/X})^{-1})$$ 
where $i_{E_i}$ is the closed immersion and $\td(N_{E_i/X})$ is the Todd class of the normal bundle of $E_i$.
For any cohomology class $\beta$ on $X$,
$$\int_X \ch(\cF)\wedge \beta \wedge \alpha^{n-i}=
\sum_i \int_{E_i} \ch(\cF_i)\td(N_{E_i/X})^{-1} 
\wedge i_{E_i}^* \beta \wedge i_{E_i}^* \alpha^{n-i}=0$$
since $i_{E_i}^* \alpha=0$.
\end{proof}
As an application of this lemma, we have the following result.
\begin{mylem}
Let $\cF$ be a reflexive sheaf over a compact complex manifold $X$.
Let $\sigma: \tilde{X} \to X$ be a modification of $X$ such that there exists a SNC divisor $E$ in $\tilde{X}$ such that 
$$\sigma: \tilde{X} \setminus E \to X \setminus \pi(E)$$
is biholomorphism with $E$ a SNC divisor and the codimension of $\pi(E)$ at least 3
and $\sigma^* \cF /\tors$ is locally free.
Then we have that for $i=1,2$
$$c_i(\cF)=\sigma_* (c_i(\sigma^* \cF/\tors)).$$
\end{mylem}
\begin{proof}
First observe that such a modification always exists by the fundamental work of \cite{Ros}, \cite{GR}, \cite{Rie}.

Without loss of generality we can assume that the dimension of $X$ is at least 3.
Otherwise, $\cF$ is locally free and the result is direct.
By Poincar\'e duality, it is the same to prove that for $i=1,2$ and any cohomology class $\alpha$ we have that
$$\int_X c_i(\cF) \wedge \alpha=\int_{\tilde{X}} (c_i(\sigma^* \cF/\tors)) \wedge \sigma^* \alpha.$$
Recall that $\sigma^* \ch(\cF)=\sum_i (-1)^i \ch(L^i \sigma^* \cF)^{}$ where $L^i \sigma^* $ is the $i$-th left derived functor of $\sigma^*$.
Without loss of generality, we can assume that $\cF$ is locally free over $X \setminus \pi(E)$.
In particular,
$L^i \sigma^* \cF$ for any $i >0$ is supported in the exceptional divisor.
On the other hand, the torsion part of $\sigma^*$ is also supported in the exceptional divisor.
By the above lemma, we have that
$$\int_{\tilde{X}} (c_i(\sigma^* \cF/\tors)) \wedge \sigma^* \alpha=\int_{\tilde{X}} \sigma^*(c_i(\cF)) \wedge \sigma^* \alpha$$
which concludes the proof.
\end{proof}
 We now introduce the definition of nef and strongly psef torsion-free sheaves.
\begin{mydef}[Nef/ Strongly psef torsion-free sheaf]\strut\\
Assume that $\cF$ is a torsion free sheaf over a compact complex manifold $X$.
We say that $\cF$ is nef $($resp.\ strongly psef$\,)$ if there exists some modification
$\pi: \tilde{X} \to X$ such that $\pi^* \cF/\tors$ is a nef
$($resp.\ strongly psef$\,)$ vector bundle where $\tors$ means the torsion part.
\end{mydef}
Notice that for any further modification $\pi': \tilde{X}' \to \tilde{X}$, $\pi'^*(\pi^* \cF/\tors)=(\pi \circ \pi')^* \cF /\tors$ (in particular, further pull back is still a nef or strongly psef vector bundle).
In fact, for any morphism $\pi',\pi$, there exist natural surjective morphisms
$$(\pi \circ \pi')^* \cF=\pi'^*  \pi^* \cF \to \pi'^* ( \pi^* \cF /\tors) \to (\pi'^* ( \pi^* \cF /\tors))/\tors$$
which induces a surjection
$(\pi' \circ \pi)^* \cF /\tors \to (\pi'^* ( \pi^* \cF /\tors))/\tors$.
It is generic isomorphism on which $\cF$ is locally free.
Thus the kernel of the induced morphism is a torsion sheaf.
Since $(\pi' \circ \pi)^* \cF /\tors$ is torsion free, the morphism is also injective.

More generally, we show in the next remark that the definition is independent of the choice of the pull back. 
\begin{myrem}{\rm
By the work of \cite{Ros}, \cite{GR}, \cite{Rie}, for any torsion-free sheaf $\cF$ over a compact complex manifold, there exists a modification $\pi: \tilde{X} \to X$ such that $\pi^* \cF/\tors$ is a locally free sheaf (i.e.\ a vector bundle).
In the above definition, we say that $\cF$ is nef or strongly psef if $\pi^* \cF/\tors$ is nef or strongly psef.

Let us recall here Theorem 1.B.1 of \cite{Paun}.
Let $f: Y \to X$ be a surjective holomorphic map between compact complex manifolds. 
Let $\alpha$ be a cohomology class in the Bott-Chern cohomology class $H^{1,1}_{BC}(X, \C)$.
Then $\alpha$ is nef if and only if $f^* \alpha$ is nef.

For the vector bundle case, a modification $\sigma: \tilde{X} \to X$ induces a surjection $\tilde{\sigma}:\P(\sigma^* E) \to \P(E)$ where $E$ is a vector bundle over $X$.
The pull back of $\cO_{\P(E)}(1)$ under $\tilde{\sigma}$ is $\cO_{\P(\sigma^* E)}(1)$.
Thus $\sigma^* E$ is nef if and only if $\cO_{\P(\sigma^* E)}(1)$ is nef which is equivalent to say that $\cO_{\P( E)}(1)$ is nef, i.e. $E$ is nef.

Thus in the above definition, it is same to say that $\cF$ is nef if and only if for {\it every} modification $\sigma: \tilde{X} \to X$ such that $\sigma^* \cF/\tors$ is a vector bundle, $\sigma^* \cF/\tors$ is nef.

Similarly, let $f: Y \to X$ be a surjective holomorphic map between compact complex manifolds. 
Let $\alpha$ be a cohomology class in the Bott-Chern cohomology class $H^{1,1}_{BC}(X, \C)$.
Then $\alpha$ is strongly psef if and only if $f^* \alpha$ is strongly psef. 
The pull back of a strongly psef vector bundle $E$ under a modification $\sigma$ is psef if and only if $E$ itself is psef.
Once a smooth metric has been fixed on $E$, the singular metrics on $\cO_{\P(\sigma^* E)}(1)$ (resp.\ on $\cO_{\P( E)}(1)$) are identified with quasi-psh functions. Let us observe  that the push forward of a psh function with analytic singularities under a proper modification is still a psh function with analytic singularities.
The singular set of the pushed forward weight on $\cO_{\P( E)}(1)$ is the image of the singular set of the weight function on $\cO_{\P(\sigma^* E)}(1)$. 

More precisely, denote by $\tilde{\pi}: \P(\sigma^* E) \to \tilde{X}$ and $\pi: \P(E) \to X$ the projections. We have $\pi \circ \tilde{\sigma}=\sigma \circ \tilde{\pi}$.
For a simple blow-up with a smooth irreducible centre, the opposite of the cohomology class of the exceptional divisor has a smooth representative that is positive along the fibers of the projectivised normal bundle. From this, it is easy to see that exists a smooth form $\omega_E$ on $\tilde{X}$ such that $\sigma^*  \omega_X +\omega_E$ is a K\"ahler form on $\tilde{X}$, and $\{\omega_E\}=-\{[E]\}$ for a suitable combination $E=\sum\delta_jE_j$, $\delta_j\in\R_{>0}$  of the irreducible components $E_j$ of the exceptional divisor. Notice that $\{ \sigma_* \omega_E \}$ is the zero cohomology class.
Denote by $\varphi$ a quasi psh function on $\tilde{X}$ such that 
$$\omega_E=-[E]+i \d \dbar \varphi.$$
Assume that $\sigma^* E$ is strongly psef and let us use a reference metric $\sigma^* h_\infty$ induced by a smooth metric $h_\infty$ on $E$. Then there exist quasi-psh functions $\psi_\varepsilon$ with analytic singularities such that
$$i \Theta(\cO_{\P(\sigma^* E)}(1),\sigma^* h_\infty e^{-\psi_\varepsilon}) \geq - \varepsilon \tilde{\pi}^* (\sigma^*  \omega_X +\omega_E),$$
and $\sigma^* h_\infty e^{-\psi_\varepsilon-\tilde{\pi}^*\varphi}$ are singular metrics with analytic singularities on $\cO_{\P(\sigma^* E)}(1)$.
By taking the push-forward of the quasi-psh functions $\psi_\varepsilon+\varepsilon \tilde{\pi}^*\varphi$ under the modification $\tilde{\sigma}$, we get singular metrics $h_\varepsilon:=h_\infty e^{-\tilde{\sigma}_*(\psi_\varepsilon+\varepsilon\tilde{\pi}^*\varphi)}$ on $\cO_{\P( E)}(1)$ possessing analytic singularities and satisfying the condition
$$i \Theta(\cO_{\P( E)}(1),{ h}_\varepsilon) \geq - \varepsilon {\pi}^* \omega_X .$$
In the above definition, it is thus the same to say that $\cF$ is strongly psef if and only if for {\it every} modification $\sigma: \tilde{X} \to X$ such that $\sigma^* \cF/\tors$ is a vector bundle, $\sigma^* \cF/\tors$ is strongly psef.}
\end{myrem}
In fact, following the arguments in \cite{Paun} and \cite{DPS94}, we can prove a more general result.
\begin{mythm}
Let $f: Y \to X$ be a surjective holomorphic map between compact K\"ahler manifolds.
Let $E$ be a vector bundle over $X$.
Then $f^* E$ is strongly psef if and only if $E$ is strongly psef.
\end{mythm}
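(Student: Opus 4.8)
The plan is to prove the two implications separately. The implication ``$E$ strongly psef $\Rightarrow$ $f^{*}E$ strongly psef'' is immediate: $f$ induces a surjection $\tilde f\colon\P(f^{*}E)\to\P(E)$ with $\tilde f^{*}\cO_{\P(E)}(1)=\cO_{\P(f^{*}E)}(1)$ and $\pi_{X}\circ\tilde f=f\circ\pi_{Y}$. Pulling back by $\tilde f$ the metrics $h_{\varepsilon}$ on $\cO_{\P(E)}(1)$ furnished by the definition, one obtains metrics $\tilde f^{*}h_{\varepsilon}$ which still have analytic singularities, whose curvature satisfies $i\Theta(\tilde f^{*}h_{\varepsilon})\ge-\varepsilon\pi_{Y}^{*}f^{*}\omega\ge-C\varepsilon\pi_{Y}^{*}\omega_{Y}$ (using that $f^{*}\omega\le C\omega_{Y}$ on the compact $Y$, and that $\pi^{*}\omega$ may be replaced by any K\"ahler form as explained after the definition), and whose singular locus projects to $f^{-1}(\pi_{X}(\Sing h_{\varepsilon}))\subsetneq Y$ since $f$ is surjective.

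For the converse I follow the strategy of \cite{Paun} and \cite{DPS94}, the new point being to keep track of the two extra features of strong pseudo-effectivity, namely the analytic singularities and the non-covering of the base by the projected singular locus. By the modification invariance of strong pseudo-effectivity established above, I may replace $Y$, and $X$, by arbitrary modifications. Applying the Stein factorization of $f$ and resolving (so that $Y$, $X$ and the intermediate manifold all remain K\"ahler, which is possible since the image of a K\"ahler manifold lies in Fujiki's class $\mathcal C$), I reduce to two situations: (i) $f$ finite surjective, and (ii) $f$ surjective with connected fibres; in each, $f^{*}E$ is the pull-back of the corresponding bundle downstairs, so strong pseudo-effectivity need only be descended in these two cases.

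\emph{The finite case.} Here $\dim Y=\dim X$ and $\tilde f\colon\P(f^{*}E)\to\P(E)$ is finite of some degree $d$. Given metrics $h'_{\varepsilon}$ on $\cO_{\P(f^{*}E)}(1)$ with local weights $\varphi'_{\varepsilon}$ as in the definition, I push them down by the norm (trace) construction, i.e.\ with weights $\varphi_{\varepsilon}:=\frac{1}{d}\,\tilde f_{*}\varphi'_{\varepsilon}$, the fibrewise sum. The push-forward of a quasi-psh function with analytic singularities under a finite map again has analytic singularities, its singular locus being the (analytic) image of the original one; hence $\pi_{X}(\Sing h_{\varepsilon})=f(\pi_{Y}(\Sing h'_{\varepsilon}))$ is a proper analytic subset of $X$, because $\pi_{Y}(\Sing h'_{\varepsilon})\subsetneq Y$ and a finite map does not raise dimensions. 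For the curvature, $\tilde f$ is \'etale off its branch divisor $B$, and summing $i\d\dbar\varphi'_{\varepsilon}\ge-\varepsilon\pi_{Y}^{*}\omega_{Y}$ over the local inverse branches gives, away from $B$, a bound $i\d\dbar\varphi_{\varepsilon}\ge-\varepsilon\,\eta$ where $\eta$ is a positive current, smooth off $B$ but with an $L^{1}$ singularity of order $<2$ along $B$; one checks $\eta\le C(\pi_{X}^{*}\omega+i\d\dbar\rho)$ for a bounded continuous function $\rho$ (a fractional power of a defining section of $B$), so $\varphi_{\varepsilon}+C\varepsilon\rho$ is $(C\varepsilon\pi_{X}^{*}\omega)$-psh off $B$, locally bounded above, hence extends as such across $B$ by the extension theorem for quasi-psh functions. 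This shows $E$ is strongly psef.

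\emph{The connected-fibre case.} This is the substantial one. By generic smoothness there is a dense Zariski-open $X^{\circ}\subseteq X$ over which $f$, hence also $\tilde f\colon\P(f^{*}E)\to\P(E)$ (whose fibres are those of $f$), is a proper submersion with compact K\"ahler fibres. Along a fibre $\Phi$ of $\tilde f$ the line bundle $\cO_{\P(f^{*}E)}(1)=\tilde f^{*}\cO_{\P(E)}(1)$ has trivial first Chern class, so a pseudo-effective metric on it restricted to $\Phi$ differs from the unique flat metric only by a weight with Lelong numbers $O(\varepsilon)$ (its $i\d\dbar$ is $\ge-\varepsilon\omega$ and integrates to $0$). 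I descend the weights $\varphi'_{\varepsilon}$ over $\pi_{X}^{-1}(X^{\circ})$ by a fibrewise procedure along $\tilde f$ (a fibrewise supremum, or a fibrewise Bergman-type average): such an operation carries quasi-psh functions to quasi-psh functions over a proper submersion, preserves analytic singularities, and — choosing the K\"ahler form upstairs to be the restriction of a product form under $\P(f^{*}E)\hookrightarrow Y\times\P(E)$ — yields a controlled loss $i\d\dbar\varphi_{\varepsilon}\ge-C\varepsilon\pi_{X}^{*}\omega$. The projected singular locus avoids $X^{\circ}$: a fibre over $(x,[\xi])$ lies in $\Sing\varphi'_{\varepsilon}$ only if $f^{-1}(x)\subseteq\pi_{Y}(\Sing h'_{\varepsilon})\subsetneq Y$, which confines $x$ to a proper analytic subset of $X$ since the general fibre of $f$ is not contained in a proper subvariety. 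Finally, as the $\varphi_{\varepsilon}$ are bounded above and $X\setminus X^{\circ}$ is a proper analytic subset, the extension theorem for quasi-psh functions extends the $\varphi_{\varepsilon}$ across $\pi_{X}^{-1}(X\setminus X^{\circ})$, exhibiting $E$ as strongly psef. The main obstacle is exactly this curvature bookkeeping: the K\"ahler form governing the curvature of $h'_{\varepsilon}$ is not pulled back from $\P(E)$, so its positive, potential-free restriction to the fibres of $\tilde f$ interferes with the fibrewise operation, and showing that the descent loses only a uniform factor $C\varepsilon$ is the technical heart, handled as in \cite{Paun} by adapting the reference metrics to the embedding $\P(f^{*}E)\hookrightarrow Y\times\P(E)$ and using the near-flatness of $\cO_{\P(f^{*}E)}(1)$ along the fibres.
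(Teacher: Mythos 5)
You and the paper prove the easy pull-back direction in the same way. For the converse, the two arguments diverge at the reduction step. The paper invokes Hironaka's flattening theorem to replace $f$ by a flat (equidimensional) morphism pre- and post-composed with modifications, so that the only cases needed are the modification invariance (Remark~3) and the flat case (Proposition~2, a fibrewise supremum with Paun/DPS-style correction terms $\lambda_\varepsilon^{(k)}$). You instead use the Stein factorization to try to reduce to a finite map and a connected-fibre map, and in the finite case you introduce a genuinely different descent, the fibrewise trace $\varphi_\varepsilon=\tfrac{1}{d}\tilde f_*\varphi'_\varepsilon$ rather than a supremum. That trace device is a reasonable alternative, and your remark that the pushed-forward Kähler form has an $L^1$ pole of order ${}<2$ along the branch divisor, dominated by $i\d\dbar$ of a bounded fractional power of a defining section, is the right mechanism to recover a clean curvature bound.

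There are, however, two places where your proposal has real gaps. First, the Stein reduction does not cleanly produce the two cases you announce: the intermediate Stein space $Z$ is in general a singular normal complex space, and after resolving it to a smooth Kähler $\tilde Z$ the map $\tilde Z\to X$ is only generically finite, not finite, while $\tilde Y\to\tilde Z$ (after resolving indeterminacies) no longer has to be the original connected-fibre piece. You would either need to work with a strongly-psef notion on the singular space $Z$ directly (the paper explicitly allows the domain of Proposition~2 to be a compact K\"ahler complex space, precisely to avoid this problem), or supply an extra argument that a generically finite surjective map between smooth compact K\"ahler manifolds factors through a modification followed by a finite map. As written, the dichotomy ``finite or connected fibres'' does not exhaust the situation.

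Second, in the connected-fibre case you restrict to the generic submersion locus $X^\circ$, descend, and then extend across $X\setminus X^\circ$, but the entire curvature estimate --- the thing you yourself call ``the technical heart'' --- is deferred to \cite{Paun} and \cite{DPS94} without the key modifications. In particular, the fibrewise supremum of $(\,-\varepsilon\tilde\omega)$-psh functions is in general not $(-C\varepsilon\pi_X^*\omega)$-psh downstairs unless one adds explicit correction terms (the $\lambda_\varepsilon^{(k)}$ in the paper's Proposition~2, or a variant), because part of the Kähler form upstairs lives along the fibres of $\tilde f$ and interferes with the sup. Your remark about the product Kähler structure on $\P(f^*E)\hookrightarrow Y\times\P(E)$ and the near-flatness of $\cO_{\P(f^*E)}(1)$ along fibres is a correct observation, but it does not by itself replace those corrections; it would need to be carried out. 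The paper's route via flattening avoids the extension step entirely and writes out the corrections, so while your plan is plausible, it is not yet a proof, and the reduction step needs repair before the rest can even be attempted.
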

\begin{proof}
It is easy to see that $E$ is strongly psef implies that $f^*E$ is strongly psef. 
To prove the inverse direction, we use the Hironaka flattening theorem which shows the existence of a commutative diagram
$$
\begin{matrix}
Z&\kern-6pt\mathop{\longrightarrow}\limits^{\pi_2}
\kern-6pt & \tilde{X}\\
\noalign{\vskip3pt}  
\llap{\hbox{$\scriptstyle\pi_1$}}\Big\downarrow&  &
\Big\downarrow\rlap{\hbox{$\scriptstyle\sigma$}}\\
Y&\kern-6pt\mathop{\longrightarrow}\limits^f\kern-6pt& X
\end{matrix}
$$
where $Z$ is a compact K\"ahler complex space, $\pi_2$ a flat morphism (i.e.\ with equidimensional fibres) and $\sigma$ a composition of blow-ups of smooth centres.
In the previous remark, we prove that the pull back of a vector bundle under a blow-up of smooth center is strongly psef if and only if it is itself strongly psef.
The result will follow if we prove that the pull back of a vector bundle under a flat morphism is strongly psef if and only if it is itself strongly psef.
Intuitively, we would want to take the quasi-psh weight at any point to be the supremum of the quasi-psh weight on the pre-image of that point.
But this operation does not necessarily give the desired lower bound of curvature. In order to overcome this difficulty, we use a modified version of the
argument given in \cite{DPS94} Proposition 1.8, as follows.
\end{proof}
\begin{myprop}{\it
Let $f: Y \to X$ be a surjective holomorphic map with equidimensional fibres where $X$ is a compact K\"ahler manifold and $Y$ is a compact K\"ahler complex space.
Let $E$ be a vector bundle over $X$.
Then $f^* E$ is strongly psef if and only if $E$ is strongly psef.}
\end{myprop}
\begin{proof}
The proof is essentially the same as for Th\'eor\`eme 1.B.1 in~\cite{Paun} and Proposition 1.8 in~\cite{DPS94}.
We just outline the arguments with the necessary modifications.

We denote by the same symbol $f$ the induced map $\P(f^* E) \to \P(E)$.
Let $\alpha$ be the curvature form in the cohomology class $c_1(\cO_{\P( E)}(1))$ induced by some smooth metric on $E$.
Let $\psi_\varepsilon$ be quasi psh functions with analytic singularities on $\P(f^* E)$ such that
$$f^* \alpha+\frac{i}{2 \pi} \d \dbar \psi_\varepsilon \geq -\varepsilon \omega',
\quad\varepsilon>0,$$
for some K\"ahler form $\omega'$ on $\P(f^* E)$.
The existence follows from the definition of a strongly psef vector bundle (the definition of a strongly psef vector bundle is still valid for a compact K\"ahler complex space).

Denote by $p$ the dimension of fibres.
For every $y \in \P(f^* E)$ there exist local holomorphic functions $w_1, \cdots, w_p$ in a neighbourhood $U$ of $y$ such that $z \mapsto (f(z), w_1(z), \cdots, w_p(z))$ is a proper finite morphism from $U$ to a neighbourhood of $\{f(y)\} \times \{0\}$ in $\P(E) \times \C^p$.
Thus there exist local coordinates centered at $f(y)$ on $\P(E)$ such that
$$|F(z)-F(y)|^2+\sum_{1 \leq j \leq p}|w_j(z)|^2>0$$ 
on $\dbar U$, where $F= (F_1,\cdots,F_n)$ denote the local coordinate components of $f$.

Since $\P(f^* E)$ is compact, we can cover $\P(f^* E)$ by finitely many such sets $U_k$ centered at $y_k \in \P(f^* E)$, and find corresponding holomorphic functions $(w_1^{(k)}, \cdots, w_p^{(k)})$ on $\overline{U}_k$, as well as components $F^{(k)}$. Each $U_k$ can be supposed to be embedded as a closed analytic set of some open set in $\C^{N_k}$ with coordinates $(w_1^{(k)}, \cdots, w_p^{(k)},\cdots, w_{N_k}^{(k)})$
(i.e., we complete $(w_1^{(k)}, \cdots, w_p^{(k)})$ into a local coordinate system of $\C^{N_k}$).
By construction,
$$2 \delta_k:=\inf_{\d U_k}|F^{(k)}(z)-F^{(k)}(y_k)|^2+\sum_{1 \leq j \leq p}|w_j^{(k)}(z)|^2>0.$$
We can even suppose that the open sets
$$V_k:= \{z \in U_k; |F^{(k)}(z)-F^{(k)}(y_k)|^2+\sum_{1 \leq j \leq p}|w_j^{(k)}(z)|^2 <\delta_k\}$$
cover $\P(f^* E)$. Define for $z \in \overline{U_k}$
$$\lambda_\varepsilon^{(k)}(z):= \varepsilon^3 \sum_{1 \leq i \leq N_k} |w_i^{(k)}|^2-\varepsilon^2 (|F^{(k)}(z)-F^{(k)}(y_k)|^2+\sum_{1 \leq j \leq p}|w_j^{(k)}(z)|^2-\delta_k),$$
and for $x \in \P(E)$,
$$\varphi_\varepsilon:= \sup_{y \in f^{-1}(x) \cap \overline{U}_k} (\psi_{\varepsilon^4}(y)+\lambda_\varepsilon^{(k)}(y))$$
where the supremum is also taken with respect to $k$. The curvature condition is checked in the same way as in \cite{Paun} and \cite{DPS94}.

Let us observe that by using a regularization, one can assume that the quasi psh weight $\psi_\varepsilon$ is continuous (i.e.\ locally the weight is of the form $c \log \sum |g_j|^2+f$ where $f$ is continuous, and not just bounded).

By choosing $\varepsilon$ small enough, we get $\varphi_\varepsilon$ continuous with values in $[ -\infty, \infty[$.
In fact, for $\varepsilon$ small enough, $\lambda^{(k)}_\varepsilon$ is strictly negative on the boundary of $U_k$ and positive on $V_k$.
Thus the function $\Psi_\varepsilon(y):=\psi_{\varepsilon^4}(y)+\sup_{ y \in \overline{U}_k} \lambda_\varepsilon^{(k)}(y) $
is continuous on $Y$.
Since $\varphi_\varepsilon(x)=\sup_{y \in f^{-1}(x)} \Psi_\varepsilon(y)$, $\varphi_\varepsilon$ is continuous on $X$.

We now turn ourselves to the proof that $\varphi_\varepsilon$ has analytic singularities. Observe that $\varphi_\varepsilon$ has the same singularities as the function $\sup_{y \in f^{-1}(x)} \psi_{\varepsilon^4}(y)$ on $X$,
since the functions $\lambda_\varepsilon^{(k)}$ are bounded.
We claim the following more general fact: let $f: Y \to X$ be a proper morphism between complex spaces,
and $\varphi_\varepsilon$ be a quasi psh function with analytic singularities on $Y$, then the function
$$f_* \varphi(x):=\sup_{y \in f^{-1}(x)} \varphi(y)$$
has analytic singularities on $X$.
Here ``$\varphi$ is a quasi-psh function over a complex space'' means that $\varphi$ can be locally extended as a quasi-psh function to any open set of $\C^{N}$ in which $Y$ can be embedded as a closed analytic set;
that $\varphi$ has analytic singularities means for every $y \in Y$, there exists an open set on which $\varphi=c \log (\sum |g_i|^2)+f$, with holomorphic functions $g_i$ and a bounded function $f$. 
 
By Hironaka, there exists a modification $\sigma:\tilde{Y} \to Y$ such that $\tilde{Y}$ is smooth. By considering $f \circ \sigma$ and $\varphi \circ \sigma$, we are reduced to the case where $Y$ is smooth.
For every $x \in X$, we can cover $f^{-1}(x)$ by finite open sets $U_k$ such that the restriction of $\varphi$ to each open set is of the form $c\log\sum |g_i^{(k)}|^2+O(1)$, where $g_i$ are holomorphic functions on this open set and $O(1)$ is a bounded term.
There exists an open neighbourhood $V$ of $x$ such that $f^{-1}(V) \subset \cup_k U_k$.
For every $x \in V$,
$$f_* \varphi(x)=\sup_k \sup_{y \in f^{-1}(x) \cap U_k} \varphi(y).$$ 
Since a finite supremum of quasi-psh functions with analytic singularities still has analytic singularities, it is enough to show that $\sup_{y \in f^{-1}(x) \cap U_k} \varphi(y)$ has analytic singularities for every $k$.
Since we take a finite supremum, the bounded terms will remain bounded after taking the supremum, therefore we are only concerned with the logarithmic term in what follows.

Let $J_k$ be the maximal germ of ideal sheaf at $x$ such that $f^* J_k|_{U_k} \subset (g_i^{(k)})$ with respect to the inclusion relation.
(Here one may have to shrink the open set $U_k$, i.e.\ the inclusion is to be understood in the sense of germs at any point of $f^{-1}(x)$.)
Then the ideal $(g_i^{(k)})$ is generated by finitely many holomorphic functions that are either of the form $f^* h_\alpha^{(k)}$ for some holomorphic function germ at $x$, or of the form $f_\beta^{(k)}$ for some holomorphic function on $U_k$.
We claim that the zero set $V(f_\beta^{(k)})$ is not of the form $f^{-1}(f(V(f_\beta^{(k)})))$.
Otherwise, by Hilbert's Nullstensatz, $f_\beta^{(k)}$ is contained in the germ of pull back of the prime ideal sheaf vanishing on $f(V(f_\beta^{(k)}))$, contradicting the maximality of $J_k$. Therefore
$$\log(\sum_\alpha | g_\alpha^{(k)}|^2)(x)=\sup_{y \in f^{-1}(x) \cap U_k}\log(\sum_\alpha |f^* h_\alpha^{(k)}|^2+\sum_\beta |f_\beta^{(k)}|^2),$$
which also has analytic singularities.
\end{proof}
\begin{myrem}
{\rm
Observe that when the manifold $X$ is projective, there exists a subtlety in the definition of a strongly psef torsion free sheaf.
Recall that a torsion free sheaf $\cF$ over a projective manifold $X$ with an ample line bundle $A$ is called weakly positive in the sense of Nakayama (cf. \cite{Nak}) if for any $a \in \N$, there exists $b \in \N^*$ such that
 $(\mathrm{S}^{ab}  \cF)^{\lor \lor} \otimes  A^b$
is globally generated at some point (hence generically globally generated).  
Our definition of strongly psef torsion free sheaf implies that it is the weak positive torsion free sheaf in the sense of Nakayama, but not inversely in general.

First we show that if $\cF$ is a strongly psef torsion free sheaf, then it is weakly positive in the sense of Nakayama.
Let $\sigma: \tilde{X} \to X$ be a composition of blow-ups of the smooth centers such that the pull back of torsion free coherent sheaf $\sigma^* \cF/\tors$ is a strongly psef vector bundle.
Let $A$ be an ample line bundle over $X$.
For $b$ large enough, 
$\sigma^* A^b -E$ is an ample line bundle over $\tilde{X}$ where $E$ is the exceptional divisor.
$\mathrm{S}^{ab} (\sigma^* \cF/\tors) \otimes \sigma^* A^b \otimes \cO(-E)$ is generically globally generated over $\tilde{X}$ by possible larger $b$ and by changing $\cO(-E)$ by its multiple. 
It is from the assumption that $\sigma^* \cF/\tors$ is a strongly psef vector bundle over $\tilde{X}$.
By tensoring the canonical section of the line bundle $\cO(E)$, $\mathrm{S}^{ab} (\sigma^* \cF/\tors) \otimes \sigma^* A^b$ is generically globally generated over $\tilde{X}$.
Thus the same holds for $(\mathrm{Sym}^{ab} \cF)^{\lor \lor} \otimes A^b$ over $X$ by the natural isomorphism $(\mathrm{Sym}^{ab} \cF)^{\lor \lor} \otimes A^b \to [\sigma_*(\mathrm{Sym}^{ab} (\sigma^* \cF/ \tors) \otimes \sigma^* A^b)]^{\lor \lor}$.

To indicate the subtlety, we use the same notations as above. 
For the inverse direction, we hope to show that 
$(\mathrm{S}^{ab} (\sigma^* \cF/\tors))^{\lor \lor} \otimes \sigma^* A^b$ is generically globally generated over $\tilde{X}$ for large $b$ from the fact that $(\mathrm{S}^{ab}  \cF)^{\lor \lor} \otimes  A^b$ is generically globally generated over $X$ for large $b$.
Let $S$ be the analytic set of codimension at least 2 in $X$ such that $\sigma: \tilde{X} \setminus E \to X\setminus S$ is biholomorphic.
But the global sections
$$H^0(X,(\mathrm{S}^{ab}  \cF)^{\lor \lor} \otimes  A^b) \cong H^0(X \setminus S,(\mathrm{S}^{ab}  \cF)^{\lor \lor} \otimes  A^b) \cong H^0(\tilde{X} \setminus E,\mathrm{S}^{ab} (\sigma^* \cF/ \tors) \otimes  \sigma^* A^b)$$
do not necessarily extend over $\tilde{X}$ even seen as a section of $\mathrm{S}^{ab} (\sigma^* \cF/\tors) \otimes  \sigma^* A^b \otimes \tilde{A}$ where $\tilde{A}$ is an arbitrary ample line bundle over $\tilde{X}$.
The reason is that
the sections may have essentially singularity along $E$.

A typical example is the following. Let $S$ be an analytic set of codimension at least two over a projective manifold $X$ and let $\cI_S$ be the ideal sheaf associated to $S$.
The bidual of $\cI_S$ is $\cO_X$ as well as all the symmetric power.
Let $U$ be any open set of $X$. We have that for any $m$ and any vector bundle $E$
$$H^0(U, (S^m \cI_S)^{\lor \lor} \otimes E)\cong H^0(U \setminus S, (S^m \cI_S)^{\lor \lor} \otimes E)=H^0(U\setminus S, E)\cong H^0(U,E)$$
where the last equality follows from the Hartogs theorem.
As a consequence, for any ample divisor $A$ and any $a \in \N^*$,
$(S^{ab} \cI_S)^{\lor \lor} \otimes A^{ b}$
is globally generated for $b$ large enough.
In particular, $\cI_S$ is weakly positive in the sense of Nakayama.

However, observe that $\cI_S$ has some ``negativity" along $S$, even if it is weakly positive in the sense of Nakayama.
It can be seen as follows.
Let $\sigma: \tilde{X} \to X$ be a composition of blow-ups with smooth centers such that $\sigma^* \cI_S/ \tors=\cO_{\tilde{X}}(-E)$ where $E$ is an effective divisor supported in the exceptional divisor.
By the definition of strongly psef torsion free sheaf, if $\cI_S$ is strongly psef, $\sigma^* \cI_S /\tors$ should be a psef vector bundle since it is a line bundle.
But it is not the case which means that $\cI_S$ is not strongly psef.
In other words, our definition of strong psef torsion free sheaf is reasonable which forbids the above kind of negativity which will appear in some birational models.
}
\end{myrem}
Like the bundle case, the strongly psef torsion-free sheaf is stable under the usual algebraic operations, with the consideration of taking torsion free part.
\begin{myex}{\rm (The pull back of a torsion free sheaf is not necessarily torsion free)

According to the knowledge of the author, this example can be found in \cite{GR}.
Let $X$ be the blow up of the origin of $\C^2$ with $\pi: X \to \C^2$.
Let $(x,y)$ be the coordinate of $\C^2$.
The maximal ideal at the origin can be resolved by the Koszul complex
$$\cO_{\C^2} \xrightarrow{(-y,x)} \cO_{\C^2}^{\oplus 2} \to \mathfrak{m}_0 \to 0$$
where the second arrow sends $(f,g)$ to $xf+yg$.
The pull back is right exact which induces the exact sequence
$$\cO_{X} \xrightarrow{(-v,uv)} \cO_{X}^{\oplus 2} \to \pi^* \mathfrak{m}_0= \mathfrak{m}_0 \otimes_{\cO_{\C^2}} \cO_X \to 0$$
where in local coordinates $\pi(u,v)=(uv,v)$ and the the second arrow sends $(f,g)$ to $f \otimes x +g \otimes y$.
We denote the second arrow as $\varepsilon$.
We claim that  $\varepsilon(-1,u)$ is not a zero element in $\pi^* \mathfrak{m}_0$.
Otherwise, $(-1,u)$ is in the kernel of $\varepsilon$ which by exactitude is of the form $(-vf,uvf)$ for some $f$. Contradiction.

On the other hand $(-1,u)$ is torsion in $\pi^* \mathfrak{m}_0$ since $v \varepsilon (-1,u)=-v \otimes x + vu \otimes y=(-v \pi^* x+vu \pi^* y) \otimes 1=0$.
Consider the composition $\cO_{X}^{\oplus 2}/ \mathrm{Ker} (\varepsilon) \to \pi^* \mathfrak{m}_0 \to \cO_X $.
The image is $ \cI_E$ where $\cI_E$ is the ideal sheaf associated to the exceptional divisor $E$.
The first morphism is in fact isomorphism.
In local coordinates the composition sends $(f,g)$ to $uvf+vg$.
The kernel is $\cO_X(-1,u)$ which is torsion.
We have isomorphism between $\cO_{X}^{\oplus 2}/ \mathrm{Ker} (\varepsilon)$ modulo this kernel and $\cI_E$.
Thus the image under $\varepsilon$ (i.e. $\cO_X \varepsilon(-1,u)$) gives all the torsion elements.
In other words, $ \pi^* \mathfrak{m}_0 / \tors \cong \cO_X(-E)$.

In fact, the morphism $u: \cO_{\C^2} \to  \cO_{\C^2}^{\oplus 2}$ induces a meromorphic map from $\C^2 $ to $\mathrm{Gr}(1,2)$ which sends $z$ to the image of $u(z)$.
The meromorphic map induces a holomorphic map from the blow up of the origin to $\mathrm{Gr}(1,2)$ which resolves the indeterminacy set of the meromorphic morphism.
The total space of $\cO_{\P^2}(-1)$ is also the blow-up of $\C^2$ at the origin with the natural projection $\tau : X \to \P^2$.
The pull back of the tautological line bundle over $\mathrm{Gr}(1,2)$ 
admits exact sequence
$$\cO_X^{\oplus 2} = \tau^* \cO_{\P^2}^{\oplus 2} \to \tau^* \cO_{\P^2}(1) \to 0. $$
The image of the kernel of $\varepsilon$ in $\tau^* \cO_{\P^2}(1)$ is 0.
Thus we have factorisation
$\cO_X^{\oplus 2}/ \mathrm{Ker} (\varepsilon) \cong \pi^* \mathfrak{m}_0  \to \tau^* \cO_{\P^2}(1) \to 0$.
The kernel of the factorisation is supported in the exceptional divisor which is thus torsion.
In conclusion, we have isomorphism $ \pi^* \mathfrak{m}_0 / \tors \cong \tau^* \cO_{\P^2}(1)$.
This shows in this special case how to find a modification such that the pull back of a torsion free sheaf is locally free modulo torsion.
This construction was generalised in  \cite{Ros}, \cite{GR}, \cite{Rie}.
(We recall it briefly in Lemma 7.)}
\end{myex}
\begin{myex}{\rm (The symmetric and wedge power of torsion free sheaves are not necessarily torsion free)

Consider the maximal ideal sheaf $\mathfrak{m}_0$ in $X=\C^2$.
The wedge power $\bigwedge^2 \mathfrak{m}_0$ is supported at the origin,
however $z_1 \wedge z_2$ is a non zero element of germ of $\wedge^2 \mathfrak{m}_0$ at the origin.

For the symmetric powers, let us first recall the following important theorem in \cite{Mic64} (cf. also Theorem 3 of \cite{LaB}).
Let $A$ be a domain, $M$ be a finitely generated $A$-module.
Then $\oplus_{i \geq 0} S^i M$ is a domain if and only if $S^i M$ is torsion free, for all $i \geq 0$.
To give a concrete example, consider a surjection from a holomorphic vector bundle $E$ to a torsion free sheaf $\cF$ over a compact manifold $X$.
Then $\P(\cF)$ is a closed analytic set of $\P(E)$.
If $\P(\cF)$ is not irreducible, by the above theorem there exists $i >0$ such that $S^i \cF$ is not torsion free.}
\end{myex}
We summarise the algebraic properties of strongly psef torsion free sheaf in the following propositions.
\begin{myprop}{\it
Let $\cF$ be a torsion free sheaf over a compact K\"ahler manifold $(X, \omega)$.
The following properties are equivalent.

(1) $\cF$ is strongly psef.

(2) For any $m \in \N^*$, $S^m \cF$ modulo its torsion part is strongly psef.

(3) There exists $m \in \N^*$ such that $S^m \cF$ modulo its torsion part is strongly psef.}
\end{myprop}
\begin{proof}
(1) implies (2) as follows. Let $\sigma$ be a modification of $X$ such that $\sigma^* \cF/\tors$ and $\sigma^*( S^m \cF/\mathrm{Tors})/\tors$ are vector bundles where $\mathrm{Tors}$ means the torsion part.
We have a surjection
$$\sigma^* S^m \cF \cong  S^m \sigma^* \cF \to \sigma^* ( S^m \cF/\mathrm{Tors}).$$
It induces a surjection
$$  S^m (\sigma^* \cF/\tors) \to \sigma^* ( S^m \cF/\mathrm{Tors})/\tors.$$
The reason is as follows.
Recall that there exists an exact sequence
$$\tors \otimes S^{m-1} \sigma^* \cF \to S^m(\sigma^* \cF) \to S^m(\sigma^* \cF /\tors)\to 0$$
induced by
$$0 \to \tors \to \sigma^* \cF \to \sigma^* /\tors \to 0.$$
The image of $ \tors \otimes S^{m-1} \sigma^* \cF$ consists of torsion elements, and induces the morphism $  S^m (\sigma^* \cF/\tors) \to \sigma^* ( S^m \cF/\mathrm{Tors})/\tors.$
Thus Corollary 2 and (1) of Corollary 3 implies that $\sigma^* ( S^m \cF/\mathrm{Tors})/\tors$ is a strongly psef vector bundle.

(3) implies (1) as follows. With the same notation, the above surjection is in fact an isomorphism since both sides have the same rank.
Thus Corollary 2 implies that $\sigma^* \cF/\tors$ is a strongly psef vector bundle.
\end{proof}
\begin{mydef}
Let $(X, \omega)$ be a compact K\"ahler manifold, $\cF$ be a torsion free sheaf on $X$ and $D$ be a $\Q-$Cartier divisor. 
Then $\cF \langle D \rangle$ is said to be $\Q-$twisted strongly pseudo-effective $($ $\Q-$twisted strongly psef for short$)$ if $S^m \cF/\mathrm{Tors} \otimes \cO_X(mD)$ is strongly psef for some $($hence any by Proposition 3$)$ $m >0$ such that $\cO_X(mD)$ is a line bundle.
\end{mydef}
\begin{myprop}{\it
\strut
\begin{enumerate}
\item A torsion free quotient sheaf of a strongly psef torsion free sheaf is strongly psef.
\item A direct summand of strongly psef torsion free sheaf is strongly psef.
\item A direct sum of strongly psef torsion free sheaves is strongly psef.
\item A tensor product $($or Schur functor of positive weight$\,)$ modulo its torsion part
  of strongly psef torsion free sheaves is strongly psef.
\end{enumerate}}
\end{myprop}
\begin{proof}
Let $\cF \to \cQ$ be a surjective morphism of torsion free sheaves with $\cF$ strongly psef over $X$.
Let $\sigma: \tilde{X} \to X$ be a modification such that $\sigma^*F/\tors$,$\sigma^* \cQ/\tors$ are vector bundles.
By assumption $\sigma^*F/\tors$ is a  strongly psef vector bundle.
$\sigma^*$ is right exact which induces surjection
$\sigma^*F/\tors \to \sigma^* \cQ/\tors$
passing to quotient.
Thus $\sigma^* \cQ/\tors$ is a quotient bundle of $\sigma^* \cF/\tors$
Using Proposition 1 we can conclude that $\sigma^* \cQ/\tors$ is strongly psef.
The other conclusions are similar and can be obtained in a formal manner.
\end{proof}
A natural operation for torsion free sheaves consists of taking the bidual.
The relationships between a torsion free sheaf and its bidual will be stated in the next propositions.
The following example indicates some of the occurring phenomena.
\begin{myex}{\rm 
Let $D$ be a smooth effective divisor over a compact K\"ahler manifold $X$ with canonical section $s_D$.
We have generic surjective sheaf morphism
$$\alpha: \cO_X^{\oplus 2} \to \cO_X(D) \oplus \cO_X(2D)$$
induced by global section $(s_D,s_D^2)$.
Then $\det(\alpha)\cong \cO_X(3D)$ has a global section $s_D^3$.
The division by this global section induces a bimeromorphic map between the total spaces of $(\cO_X(D) \oplus \cO_X(2D))^* $ and $\cO_X^{\oplus 2}\otimes \det(\alpha)^*$.
Since $\cO_X^{\oplus 2}$ is strongly psef, there exists a global (quasi-)psh function on the total space of its dual.
Pairing with $s_D^3$ induces a global (quasi-)psh function on the total space of $\cO_X^{\oplus 2}\otimes \det(\alpha)^*$ which induces a (quasi-)psh function on the total space of $(\cO_X(D) \oplus \cO_X(2D))^* $ outside a smooth divisor.
We claim that this (quasi-)psh function extends across the divisor by boundedness from above.
In particular, $(\cO_X(D) \oplus \cO_X(2D))$ is strongly psef.

For example, locally consider the psh function on the total space of $\cO_X^{\oplus 2}\otimes \det(\alpha)^*$
$$\varphi(z, \xi_1,\xi_2):=\log(|z|^6(|\xi_1|^2 +|\xi_2|^2))$$
where $\alpha(z, \xi_1, \xi_2)=(z, z \xi_1, z^2 \xi_2)$.
The induced psh function outside the divisor $\{z=0\}$ is given by
$$\varphi(z, \xi_1,\xi_2):=\log(|z|^6(|\xi_1/z|^2 +|\xi_2/z^2|^2)),$$
which is bounded from above near the divisor and can thus be extended across the divisor.}
\end{myex}
\begin{myprop}{\it
Let $\cE, \cF$ be two torsion free sheaves over a compact K\"ahler manifold $(X, \omega)$.
Let $\alpha: \cE \to \cF$ be a morphism of sheaves which is an isomorphism over a Zariski open set $X \setminus A$.
Assume that $\cE$ is strongly psef.
Then $\cF$ is strongly psef.}
\end{myprop}
\begin{proof}
Let $\sigma$ be a modification of $X$ such that $\sigma^* \cE/\tors$ and $\sigma^* \cF/ \tors$ are locally free and $\sigma^* \cE/ \tors$ is strongly psef.
We can assume that $\sigma^* \alpha$ is an isomorphism outside a divisor $E$.
Then $\det(\sigma^* \alpha)$ is an effective divisor supported in $E$.
Division by this global section induces bimeromorphic map between the total spaces of $(\sigma^* \cF/ \tors)^* $ and $(\sigma^* \cE /\tors)^* \otimes \det(\sigma^* \alpha)^*$.
By Proposition 1, the fact that $\sigma^* \cE / \tors $ is strongly psef
implies the existence of quasi-psh functions with analytic singularities on the total space of the symmetric powers of $(\sigma^* \cE/ \tors)^*$.
Pairing with the canonical section of $\det(\sigma^* \alpha)$ induces  global (quasi-)psh functions on the total space of $S^m(\sigma^* \cE/ \tors \otimes \det(\alpha))^*$.
We denote these quasi-psh functions by $w_m$.
The functions $w_m$ induce  quasi-psh function on the total space of $(S^m \sigma^* \cF/ \tors)^* $ outside the divisor $E$.
We claim that these quasi-psh functions extend across all the irreducible components of the divisor $E$ by boundedness from above.
In particular, by Proposition 1, $\sigma^* \cF/ \tors$ is a strongly psef vector bundle.

The claim is proven by a local coordinate calculation.
In local coordinate $\sigma^* \alpha(z, \xi)=(z, A(z)\xi)$
where 
$A(z)$ a matrix of holomorphic functions.
Locally
$$w_m(z, \xi)=\log(\sum_j |B_j(z)\xi|^2)+O(1)+\log(|\det(A(z))|^2)$$
where $B_j(z)$ are matrices of holomorphic functions.
The induced quasi-psh functions outside the divisor $E$ over $(S^m \sigma^* \cF / \tors)^*$ are of the form
$$\tilde{w}_m(z, \xi)=\log(\sum_j |B_j(z)A^{-1}(z)\xi|^2)+O(1)+\log(|\det(A(z))|^2).$$
Since the inverse is given by the co-adjoint of the matrix divided by its determinant,
$\tilde{w}_m$ is locally bounded from above near the divisor.
\end{proof}
The inverse direction is in general false. 
To 
get  a 
counter-example, 
we 
consider 
an 
inclusion $\cI_A \to \cO_X$ where $A$ is an analytic set of codimension at least 2.
Then $\cI_A$ is not strongly psef, while $\cO_X$ is.
However, the inclusion is an isomorphism over $X \setminus A$.
\begin{myprop}{\it
Let
$$0\to \cS \to \cF \to \cQ \to 0$$
be an exact sequence of torsion free sheaves.
If $\cF, (\det(\cQ))^{-1}$ are strongly psef and $\cS$ is reflexive, then $\cS$ is strongly psef.}
\end{myprop}
\begin{proof}
We have $S= \bigwedge^{s-1}S^* \otimes \det S$ where $s$ is the rank of $S$ outside an analytic set $A$ of codimension at least 2. 
Assume that all three sheaves are locally free outside $A$.
We have a surjective bundle morphism over $X \setminus A$
$$\bigwedge^{r-s-1} \cF/\mathrm{Tors} \otimes (\det \cQ)^{-1} \to S$$
where $r$ is the rank of $\cF$.
Since $\cS$ is reflexive (hence normal), the morphism extends as a morphism of sheaves over $X$.
By (4) of Proposition 4, $\bigwedge^{r-s-1} \cF/\mathrm{Tors} \otimes (\det \cQ)^{-1}$ is strongly psef.
By (1) of Proposition 4,
the image of this sheaf morphism is strongly psef.
Since the image ans $\cS$ are isomorphism over $X\setminus A$, by Proposition 5, 
$S$ is strongly psef.
\end{proof}
\begin{myprop}{\it
Let $\cF$ be a strongly psef torsion free sheaf of rank $r$.
Then $\det(\cF)$ is a psef line bundle.}
\end{myprop}
\begin{proof}
By (4) of Proposition 4, $\wedge^r \cF/\mathrm{Tors}$ is strongly psef.
Since $\wedge^r \cF/\mathrm{Tors}$ and $\det(\cF)$ is generic isomorphism, by Proposition 5, $\det(\cF)$ is a psef line bundle. 
\end{proof}
\begin{myprop}{\it
Let $\cF$ be a strongly psef torsion free sheaf with $c_1(\cF)=0$.
Then $\cF^*$ is a strongly psef reflexive sheaf.}
\end{myprop}
\begin{proof}
The fact that $\cF^*$ is reflexive is purely algebraic.
Outside an analytic set of codimension at least 2, $\cF$ is locally free.
Over this open set, we have an isomorphism
$$\wedge^{r-1} \cF/ \tors \otimes (\det(\cF))^{-1} \to \cF^*.$$
Since $\cF^*$ is reflexive, this morphism extends across the analytic set.
By (4) of Proposition 4, the left hand term is strongly psef.
Thus the image is strongly psef.
Moreover, the fact that we have a generic isomorphism implies that $\cF^*$ is strongly psef.
\end{proof}
\begin{mylem}
Let $\cF$ be a strongly psef torsion free sheaf with $c_1(\cF)=0$ over $X$.
Let $\sigma: \tilde{X} \to X$ be a modification such that both $\sigma^* \cF/ \tors$ and $\sigma^* \cF^*/ \tors$ are locally free.
Then $c_1(\sigma^* \cF/ \tors)=0$.
\end{mylem}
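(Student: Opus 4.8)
The plan is to show that both $c_1(L)$ and $-c_1(L)$ are pseudo-effective classes on $\tilde X$, where $L:=\det(\sigma^*\cF/\tors)$, and then to invoke the elementary fact that a class $\alpha\in H^{1,1}(\tilde X,\R)$ with $\alpha$ and $-\alpha$ both psef must vanish on a compact K\"ahler manifold: if $T_1\in\alpha$ and $T_2\in-\alpha$ are closed positive currents, then $T_1+T_2\ge 0$ represents the zero class, so $\int_{\tilde X}(T_1+T_2)\wedge\tilde\omega^{\dim\tilde X-1}=0$ for any K\"ahler form $\tilde\omega$, which forces $T_1=T_2=0$ and hence $\alpha=0$. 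Since $c_1(\sigma^*\cF/\tors)=c_1(L)$, this yields the lemma. I may assume $\tilde X$ is itself compact K\"ahler: as $\tilde X$ is bimeromorphic to $X$, replacing $\sigma$ by a further modification makes $\tilde X$ compact K\"ahler, and this is harmless because such pull-backs are injective on $H^2$ and the pull-back of a vector bundle is a vector bundle, so no new torsion appears and first Chern classes pull back to first Chern classes; in particular the hypotheses of the lemma are preserved.

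First I would observe that $c_1(L)$ is psef. Since $\cF$ is strongly psef, the remark showing that the strongly psef property of a torsion-free sheaf is independent of the chosen modification gives that $\sigma^*\cF/\tors$ is a strongly psef vector bundle, whence $L=\det(\sigma^*\cF/\tors)$ is a psef line bundle by Corollary 1 (the determinant of a strongly psef vector bundle is psef). In the same way, Proposition 8 — this is the only place where the hypothesis $c_1(\cF)=0$ enters — shows that $\cF^*$ is a strongly psef reflexive sheaf; hence $\sigma^*\cF^*/\tors$ is a strongly psef vector bundle and $M:=\det(\sigma^*\cF^*/\tors)$ is psef as well.

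The remaining point, and the crux of the argument, is that $-c_1(L)$ is psef. The subtlety is that $\sigma^*\cF^*/\tors$ need not equal $(\sigma^*\cF/\tors)^*$: the two vector bundles agree outside the exceptional locus but may differ there by a divisorial twist, and what must be checked is that this twist is by an \emph{effective} divisor. To this end I would use the natural base-change morphism $\sigma^*\cF^*=\sigma^*\mathcal{H}om_{\cO_X}(\cF,\cO_X)\to\mathcal{H}om_{\cO_{\tilde X}}(\sigma^*\cF,\cO_{\tilde X})$; composing with the identification $\mathcal{H}om_{\cO_{\tilde X}}(\sigma^*\cF,\cO_{\tilde X})\cong(\sigma^*\cF/\tors)^*$ (dualizing annihilates the torsion subsheaf) and using that the target is locally free, this factors through a morphism of vector bundles
$$\phi:\ \sigma^*\cF^*/\tors\ \longrightarrow\ (\sigma^*\cF/\tors)^*$$
which is an isomorphism off the union of the exceptional divisor and an analytic set of codimension $\ge 2$; in particular $\phi$ is an isomorphism on a dense Zariski open set. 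Taking determinants, $\det\phi$ is a non-zero section of $M^{-1}\otimes L^{-1}$, so $L^{-1}=M\otimes\cO_{\tilde X}(D)$ for some effective divisor $D\ge 0$. Therefore $-c_1(L)=c_1(M)+[D]$ is a sum of two psef classes, hence psef; combining this with the previous paragraph gives $c_1(L)=0$, i.e. $c_1(\sigma^*\cF/\tors)=0$.

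I expect the main obstacle to be precisely this last comparison of $\sigma^*\cF^*/\tors$ and $(\sigma^*\cF/\tors)^*$ along the exceptional divisor — namely, verifying that the base-change morphism is generically an isomorphism and that the discrepancy divisor $D$ is effective, which is exactly what makes $-c_1(L)$ (and not merely $c_1(L)$) come out pseudo-effective. Everything else is formal, given the earlier results.
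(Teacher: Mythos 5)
Your proof is correct and takes essentially the same route as the paper: both rely on Proposition 8 to get $\cF^*$ strongly psef, then on the natural generic isomorphism $\sigma^*\cF^*/\tors\to(\sigma^*\cF/\tors)^*$ to deduce that $-c_1(L)$ is psef along with $c_1(L)$, which forces $c_1(L)=0$. The only difference is that the paper obtains the pseudo-effectivity of $(\sigma^*\cF/\tors)^*$ by quoting Proposition~5 wholesale, whereas you unroll that step at the determinant level, producing the effective discrepancy divisor $D$ directly from $\det\phi$; this is a slightly more elementary implementation of the same idea, and your remark that $\tilde X$ is already compact K\"ahler (so no further modification is needed) is also correct.
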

\begin{proof}
There exists natural morphism
$$\sigma^* \cF^*/ \tors \to (\sigma^* \cF/ \tors)^*$$
which is generic isomorphism.
Note that $(\sigma^* \cF/ \tors)^* \cong (\sigma^* \cF)^*$ by Corollary 4.9 Chap. V \cite{Kob}.
The above morphism is induced by $\sigma^* \cF^* \to (\sigma^* \cF/ \tors)^* \cong (\sigma^* \cF)^*$
under which the torsion part is in the kernel since $(\sigma^* \cF)^*$ is torsion free.
By Proposition 5, $(\sigma^* \cF/ \tors)^*$ is strongly psef.
In other words, both $\sigma^* \cF/ \tors$ and its dual are strongly psef vector bundle which infers that its first Chern class is 0.
\end{proof}
We can now prove the main result of this section assuming the main theorem (whose proof is independent of the main result of this section). For the convenience of readers, we recall here the construction of reduction of torsion free sheaf to the vector bundle case modulo torsion.
For a complete proof, we recommend the paper of \cite{Ros}.
\begin{mylem}
Let $\cF$ be a torsion free sheaf of generic rank $r$ over $X$ a complex manifold.
There exists some modification $\sigma: \tilde{X} \to X$ such that $\sigma^* \cF/ \tors$ is locally free.
Then for every $i =1,2$, the Chern class $c_i(\cF)$ is well defined in the Bott-Chern cohomology group $H^{i,i}_{BC}(X, \C)$.

If $X$ is compact K\"ahler and $\cF$ is a reflexive sheaf, these two Chern classes can be represented by normal currents $($in fact differences of two closed positive currents$\,)$.
 \end{mylem}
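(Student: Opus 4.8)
The plan is to establish the two assertions in turn: first that $c_i(\cF):=\sigma_*\bigl(c_i(\sigma^*\cF/\tors)\bigr)$ does not depend on the modification $\sigma$, so that the Bott--Chern classes $c_i(\cF)$ are well defined; then that, over a compact K\"ahler manifold, every such class is a difference of two closed positive currents.

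For the first part, recall from \cite{Ros}, \cite{GR}, \cite{Rie} that a modification $\sigma:\tilde X\to X$ with $\sigma^*\cF/\tors$ locally free exists. Given two such, $\sigma_1:\tilde X_1\to X$ and $\sigma_2:\tilde X_2\to X$, I would dominate both by a third one: a modification $\mu:\tilde X_3\to X$ with morphisms $\tau_a:\tilde X_3\to\tilde X_a$ satisfying $\sigma_a\circ\tau_a=\mu$ $(a=1,2)$, obtained by resolving the main component of the fibre product $\tilde X_1\times_X\tilde X_2$. As recalled right after Definition 8, the canonical surjection $\mu^*\cF/\tors\to\tau_a^*(\sigma_a^*\cF/\tors)$ is a generic isomorphism between torsion-free sheaves, hence an isomorphism, and its target is already locally free; therefore $\mu^*\cF/\tors\cong\tau_a^*(\sigma_a^*\cF/\tors)$. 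Since $\tau_a$ is a modification, $(\tau_a)_*(\tau_a)^*=\mathrm{Id}$ on Bott--Chern cohomology (degree one, negligible exceptional set), so functoriality of the push-forward together with naturality of Chern classes gives
$$(\sigma_a)_*\bigl(c_i(\sigma_a^*\cF/\tors)\bigr)=(\sigma_a)_*(\tau_a)_*(\tau_a)^*\bigl(c_i(\sigma_a^*\cF/\tors)\bigr)=\mu_*\bigl(c_i(\mu^*\cF/\tors)\bigr),$$
which is independent of $a$; hence $c_i(\cF)$ is well defined in $H^{i,i}_{BC}(X,\C)$. (This works for every $i$; only $i=1,2$ is recorded, the range needed below and the one treated in Lemmas 9 and 10. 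When $\cF$ is reflexive one can instead define $c_i(\cF)$ from the locally free sheaf $\cF|_{X\setminus\Sing\cF}$ through the isomorphism $H^{i,i}_{BC}(X)\cong H^{i,i}_{BC}(X\setminus\Sing\cF)$ of Lemma 7, legitimate for $i\le2$ since $\mathrm{codim}\,\Sing\cF\ge3$, and Lemma 10 identifies the two definitions.)

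For the second part, observe first that $c_i(\cF)$ lies in the real subspace $H^{i,i}_{BC}(X,\R)$---Chern classes are real and $\sigma_*$ preserves reality---and that, by the very definition of Bott--Chern cohomology, it admits a smooth closed real representative $u$ of type $(i,i)$. Fix the K\"ahler form $\omega$. For every $x\in X$ and every complex $i$-plane $L\subset T_xX$, $\omega^i|_L$ equals $i!$ times the volume form induced on $L$, while $u|_L$ is a bounded multiple of it, with bound uniform in $(x,L)$ by compactness of $X$ and of the Grassmann bundle of $i$-planes. Hence, for $N$ large enough, the smooth closed form $N\omega^i+u$ restricts to a non-negative form on every $i$-plane, so it is a positive---hence weakly positive---$(i,i)$-form and therefore defines a closed positive $(i,i)$-current. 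Consequently $c_i(\cF)=\{N\omega^i+u\}-\{N\omega^i\}$ is a difference of two closed positive $(i,i)$-currents; any such difference has order $0$ and vanishing differential, hence is a normal current, which proves the last assertion. (For $i=1$ one may even take $u$ smooth everywhere---the curvature of a Hermitian metric on the line bundle $\det\cF=(\bigwedge^r\cF)^{\lor\lor}$, so that $c_1(\cF)=c_1(\det\cF)$---and the identical argument shows that on any compact K\"ahler manifold every real Bott--Chern class of type $(k,k)$ is a difference of two closed positive $(k,k)$-currents.)

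The step I expect to need the most care is the well-definedness in the first part. One must know that $\sigma_*$ is a well-defined, functorial operation on Bott--Chern cohomology with $\sigma_*\sigma^*=\mathrm{Id}$ for modifications, and one must keep exact track of the torsion separating $\mu^*\cF/\tors$ from $\tau_a^*(\sigma_a^*\cF/\tors)$---this is precisely what the discussion after Definition 8 and, in the reflexive case, Lemmas 9 and 10 are designed to control. By contrast, once $c_i(\cF)$ is an honest real Bott--Chern class the passage to a difference of closed positive currents is soft, using only that $\omega^i$ lies in the interior of the cone of pointwise-positive $(i,i)$-forms.
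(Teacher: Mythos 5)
Your proof is correct, but diverges from the paper's in both halves, and the comparison is instructive.

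\textbf{Well-definedness.} You dominate two modifications $\sigma_1,\sigma_2$ by a third $\mu$, identify $\mu^*\cF/\tors\cong\tau_a^*(\sigma_a^*\cF/\tors)$ via the generic-isomorphism argument from the discussion after Definition 8, and use $(\tau_a)_*(\tau_a)^*=\mathrm{Id}$ plus naturality of Chern classes. This is a clean, self-contained functoriality argument. The paper instead anchors the definition to Lemma 9 (which identifies $\sigma_*c_i(\sigma^*\cF/\tors)$ with the intrinsic $c_i(\cF)$ in de Rham cohomology) and then invokes \cite{Gri}, \cite{Wu19} to lift the identity to Bott--Chern cohomology; well-definedness is deduced from the intrinsic nature of that class rather than proved by the domination trick. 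Both are legitimate; yours is more explicit about what is actually being checked. Two small cautions: your claim that $(\tau_a)_*(\tau_a)^*=\mathrm{Id}$ on $H^{\bullet,\bullet}_{BC}$ deserves a line (it holds because $\tau_a$ is generically one-to-one, the exceptional locus has measure zero, and $(\tau_a)_*$ commutes with $\partial,\bar\partial$), and your parenthetical appeal to ``Lemma 7'' for an isomorphism $H^{i,i}_{BC}(X)\cong H^{i,i}_{BC}(X\setminus\Sing\cF)$ overstates what that lemma gives --- Lemma 7 concerns de Rham cohomology, and the paper's Remark 5 only supplies the \emph{injectivity} of the Bott--Chern restriction, which is what Lemma 10 actually uses.

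\textbf{Positive-current decomposition.} Here you argue that, since $c_i(\cF)$ is a real $(i,i)$ Bott--Chern class on the compact K\"ahler $X$, one may choose a smooth closed real representative $u$ on $X$ and take $N\gg0$ so that $N\omega^i+u$ is (weakly) positive pointwise, giving $c_i(\cF)=\{N\omega^i+u\}-\{N\omega^i\}$. The paper instead picks a K\"ahler form $\tilde\omega$ on the modification $\tilde X$, writes the smooth Chern form there as $\bigl(c_i(\sigma^*\cF/\tors,h)+C\tilde\omega^i\bigr)-C\tilde\omega^i$, and pushes forward both positive forms to $X$. Your version is softer and more general --- it shows every real $(i,i)$ Bott--Chern class on a compact K\"ahler manifold is a difference of two closed positive currents, independently of $\cF$ (and in particular the reflexivity hypothesis is not used in either argument). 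The paper's route is more constructive: the positive currents it produces carry geometric information about $\sigma$ (they are push-forwards of explicit smooth forms on $\tilde X$), which is what the subsequent Lelong-number arguments need. One point that warrants a sentence in your write-up: the defining push-forward $\sigma_*(\cdot)$ produces a \emph{current}, so the existence of a smooth representative $u$ is not ``by the very definition'' but rests on the standard isomorphism between the smooth-form and current descriptions of Bott--Chern cohomology on a compact complex manifold (via the elliptic Bott--Chern/Aeppli Laplacian).
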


\begin{proof}

Cover $X$ by Stein open sets $U_\alpha$.
On each $U_\alpha$, there exists an exact sequence
$$\cO_{U_\alpha}^{\oplus M_\alpha} \to \cO_{U_\alpha}^{\oplus N_\alpha} \to \cF|_{U_\alpha} \to 0$$
which induces a meromorphic map
$$f_\alpha: U_\alpha\dasharrow \Gr(r, N_\alpha) .$$
The maps $\cO_{U_\alpha}^{\oplus M_\alpha} \to \cO_{U_\alpha}^{\oplus N_\alpha}$ are locally given as holomorphic matrices $A_\alpha(z)$ which are of constant rank over Zariski open sets, and $f_\alpha$ sends $z$ to the image of $A_\alpha(z)$. Let $\hat{U}_\alpha$
be the graph of this map $\hat{f}_\alpha:\hat{U}_\alpha\to
\Gr(r, N_\alpha)$ be the corresponding morphism (given by the second projection
of the graph). The $\hat{U}_\alpha$ glue into a complex space $\hat{X}$
sitting over $X$, and by Hironaka, we can find a
modification $\sigma: \tilde{X} \to
\hat{X}\to X$ such that $\tilde{X}$ is smooth and
$\sigma^* \cF/\tors$ is a vector bundle (the pull-back to $\hat{X}$ comes
locally from the tautological quotient bundle $Q_\alpha$ of $\Gr(r, N_\alpha)$ generically, hence is already a vector bundle generically).
It can be shown that the surjection $\sigma^* \cF \to Q_\alpha$ which is in fact generic isomorphism. 
This infers in particular that the kernel is torsion and isomorphism $\sigma^* \cF/ \tors \to Q_\alpha$.
We equip $Q_\alpha$ with a smooth metric
(e.g.\ the standard one coming from a Hermitian structure on~$\C^{N_\alpha}$)
and use a partition of unity to endow $\sigma^* \cF/ \tors$ with a smooth metric $h$.
Then the Chern forms $c_i(\sigma^* \cF/ \tors,h)$ associated with the curvature tensor
represent the Chern classes $c_i(\sigma^* \cF/\tors)$ in Bott-Chern cohomology
on $\tilde{X}$.
We define the Chern classes $c_i( \cF/ \tors)$ in
Bott-Chern cohomology on $X$ to be
the direct images $\sigma_*c_i(\sigma^* \cF,h)$ for $i=1,2$ as in Lemma 7.
(Notice that in Lemma 7, we work with the de Rham cohomology. 
By the work of \cite{Gri} and the result in \cite{Wu19}, the same formula holds in the complex Bott-Chern cohomology.)
It is well known that these
classes are independent of the choice of the metric~$h$.

Assume now that $X$ is a compact K\"ahler manifold.
Thus $\tilde{X}$ is also a compact K\"ahler manifold.
Let $\omega$ be a smooth K\"ahler form on $\tilde{X}$.
Then for $C$ large enough, $c_i(\sigma^* \cF/ \tors,h)$ can be written as difference of two positive forms $c_i(\sigma^* \cF / \tors,h)+C \omega^i$ and $C \omega^i$.
The second statement holds by taking direct images of these positive forms.
\end{proof}

\begin{myprop}{\it
Let $\cF$ be a nef reflexive sheaf over a compact K\"ahler manifold $(X, \omega)$ with $c_1(\cF)=0$. 
 Then $\cF$ is a nef vector bundle.}
\end{myprop}
\begin{proof}
The following proof derived from Theorem 1.18 of \cite{DPS94} (cf. also \cite{Deng16}) is recommended to the author by Demailly.
Let us recall the statement of this theorem.

{\it Let $E$ be a numerically flat vector bundle over a compact K\"ahler manifold $(X,\omega)$.
Then there exists a filtration of $E$
$$0=E_0 \subset E_1 \subset \cdots \subset E_p=E$$
by vector subbundles such that the quotients $ E_k/E_{k-1}$ are hermitian flat, i.e.
given by unitary representations $\pi_1(X) \to U(r_k)$.}

Since $\cF$ is a nef reflexive sheaf with $c_1(\cF)=0$,
there exists a modification such that $\sigma: \tilde{X} \to X$ such that $\sigma^* \cF/ \tors$ is a nef vector bundle with vanishing first Chern class by Lemma 8.
By the above theorem, there exists a filtration of $\sigma^* \cF / \tors$ 
$$0=\tilde{E}_0 \subset \tilde{E}_1 \subset \cdots \subset \tilde{E}_p=\sigma^* \cF/ \tors$$
by vector bundles over $\tilde{X}$ such that $\tilde{ E}_k/\tilde{E}_{k-1}$ are hermitian flat.

We claim that $\tilde{ E}_k/\tilde{E}_{k-1}=\sigma^* (E_k/E_{k-1})$ for some vector bundle $E_k/E_{k-1}$ over $X$ for each $k$.
(For the moment, $E_k/E_{k-1}$ is just a notion, not the quotient of two vector bundles over $X$.
But it is the case which is proven in the next paragraph.)
The reason is as follows.
$\sigma_*: \pi_1(\tilde{X}) \to \pi_1(X)$ is an isomorphism since we can assume that $\sigma$ is composition of a sequence of blows-up of smooth centres and as a CW complex a blow-up of smooth center changes skeleton of (real) codimension at least 2 which preserves the fundamental group. 
Thus we have unitary representations $\pi_1(X) \to U(r_k)$ which proves the claim.

Let $A$ be the analytic set such that $\cF$ is locally free over $X\setminus A$.
Since $\cF$ is reflexive, $A$ is of codimension at least 3 in $X$.
Without loss of generality, we can assume that $\sigma$ induces an isomorphism between $\sigma^{-1}(X \setminus A)$ and $X \setminus A$.
Thus we have extension of vector bundles over $X \setminus A$ 
$$0 \to E_{k-1}|_{X \setminus A} \to E_{k}|_{X \setminus A} \to E_{k-1}/E_{k-1}|_{X \setminus A} \to 0$$ 
where $E_k$ are a priori vector bundles defined over $X \setminus A$.
By Lemma 4, the extensions extend across $A$.
Thus there exist vector bundles $E_k$ over $X$ which are the extensions of $E_{k-1}$ and $E_k /E_{k-1}$.

By construction, we have isomorphism $\cF|_{X \setminus A}\cong E_p|_{X \setminus A}$.
Since $\cF$ is reflexive, we have isomorphism $\cF\cong E_p$ over $X$.
In particular, $\cF$ is a vector bundle. 
\end{proof}
\section{Segre forms}
In this section, 
we are interested in the following problem. 
Assume that $E$ is a holomorphic vector bundle of rank $r$ over a compact K\"ahler manifold $(X, \omega)$.
Can one find a $(k,k)$-closed positive current in the Segre class $s_k(E):=\pi_*(c_1(\cO_{\P(E)}(1))^{k+r-1})$? We have to point out that a similar construction is made in \cite{LRRR}, based on Demailly's improvement (\cite{Dem92}) of the Bedford-Taylor theory (\cite{BT}) of Monge-Amp\`ere  operators.
The authors define the corresponding current as a limit of smooth forms induced from local smooth regularizations of the metric given in \cite{Rau}.
Compared to theirs, our construction has the advantage that we define the relevant current as a limit of currents defined by Monge-Amp\`ere operators without necessarily employing a regularizing sequence. In that way, we are still in a position to estimate the Lelong number of the limiting Segre current in terms by the Lelong number of the approximating sequence of weights. On the other hand,
in the case of \cite{LRRR}, the approximation is given by smooth forms, hence the Lelong number of the approximation forms is identically zero, and one does not a priori obtain any information on the Lelong number of the limiting current.
The Lelong number estimate will be necessary in the next section.

In particular, starting from a singular metric with analytic singularities on $\cO_{\P(E)}(1)$, the construction yields a singular metric on $\det(E)$ which is unique up to a constant and, as a consequence, the curvature of the induced metric of $\det(E)$ is uniquely determined by the curvature of the metric on $\cO_{\P(E)}(1)$.

To start with, we state some results of pluripotential theory.
Some of this material is not essentially needed in the construction, but it provides intuition for a few arguments. The following statement is an improvement by Demailly of the Bedford-Taylor theory (\cite{BT}) of Monge-Amp\`ere  operators.

\begin{mylem}[Proposition 10.2 \cite{Dem93}]\strut\\
Let $\psi$ be a plurisubharmonic function on a $($non necessarily compact$\,)$ complex manifold $X$ such that $\psi$ is locally bounded on $X \setminus A$, where $A$ is an analytic subset of $X$ of codimension $\geq p+ 1$ at each point.  
Let $\theta$ be a closed positive current of bidimension $(p,p)$.

Then $\theta \wedge i \d \dbar \psi$ can be defined in such a way that $\theta \wedge i \d \dbar \psi=\lim_{\nu \to \infty} \theta \wedge i \d \dbar \psi_{\nu}$ in the weak topology of currents, for any decreasing sequence $(\psi_\nu)_{\nu \geq 1}$ of plurisubharmonic functions converging to $\psi$. Moreover, at every point $x \in X$ we have
$$\nu\Big(\theta \wedge \frac{i}{\pi} \d \dbar \psi,x \Big)
\geq \nu(\theta,x) \nu(\psi,x).$$
\end{mylem}
\begin{myprop}{\it
Let $T$ be a $(k,k)$-closed positive current in the cohomology class $\alpha$, over a  compact K\"ahler manifold $(X, \omega)$.
Let $U$ be a coordinate open set of $X$ such that on $U$, 
$$C^{-1} \omega \leq \frac{i}{2\pi} \d \dbar |z|^2 \leq C \omega.$$
Then for any $r_0>0$ and for any $x \in U$ with $d(x, \d U)\geq r_0$ with respect to the Euclidean metric in the coordinate chart, we have for $r \leq r_0$
$$\frac{1}{r^{2n-2k}} \int_{B(x,r)} T \wedge \omega^{n-k} \leq \frac{C^{2n-2k}}{r_0^{2n-2k}} (\alpha \cdot \{\omega\}^{n-r}) .$$
Here $ (\alpha \cdot \{\omega\}^{n-r})$ is the intersection product of cohomology classes.}
\end{myprop}
\begin{proof}
It is enough to prove that 
$$\frac{1}{r^{2n-2k}} \int_{B(x,r)} T \wedge
\Big(\frac{i}{2\pi} \d \dbar |z|^2\Big)^{n-k}
\leq \frac{C^{n-k}}{r_0^{2n-2k}} (\alpha \cdot \{\omega\}^{n-r}) .$$
By a basic observation of Lelong in \cite{Lel}, the left hand term is a increasing function with respect to $r$.
Thus we have
$$\frac{1}{r^{2n-2k}} \int_{B(x,r)} T \wedge
\Big(\frac{i}{2\pi} \d \dbar |z|^2\Big)^{n-k}
\leq \frac{1}{r_0^{2n-2k}} \int_{B(x,r_0)} T \wedge
\Big(\frac{i}{2\pi} \d \dbar |z|^2\Big)^{n-k}.$$
However, the right hand term is at most
$$\frac{1}{r_0^{2n-2k}} \int_{B(x,r_0)} T \wedge (C \omega)^{n-k} \leq  \frac{C^{2n-2k}}{r_0^{2n-2k}} (\alpha \cdot \{\omega\}^{n-r}) $$
since $T$ is a positive current.
\end{proof}
We will need the following standard local parametrization theorem for analytic sets.
\begin{mylem}[local parametrization theorem, cf.\
e.g.\ Theorem 4.19, Chap. II \cite{agbook}]\strut\\
Let $\cI$ be an ideal in $\cO_n$, let $A=V(\cI)$ and $A_j$ be the irreducible components of $A$ whose dimension is equal to the dimension of $A$. For every $j$ and $d=d_j=\dim A_j$, there exists a generic choice of coordinates
$$(z',z'')=(z_1, \cdots,z_d; z_{d+1}, \cdots,z_n) \in \Delta' \times \Delta''$$
such that the restriction of the canonical projection to the first component $\pi_j: A_j \cap (\Delta' \times \Delta'') \to \Delta'$ is a finite and proper ramified cover, which moreover yields an \'etale cover $A_{j} \cap \pi^{-1}(\Delta' \setminus S) \to \Delta' \setminus S$, where $S$ is an analytic subset in $\Delta'$.
\end{mylem}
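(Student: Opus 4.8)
The plan is to follow the classical route through Noether normalization of analytic local rings, essentially as in Chap.~II of \cite{agbook}. First I would reduce to the irreducible case: replace $\cI$ by the prime ideal $\mathfrak{p}=\mathfrak{p}_j\subset\cO_n$ cutting out one fixed top-dimensional component $A_j$ (the decomposition of $A$ into irreducible components is finite since $\cO_n$ is Noetherian, so the components may be handled one at a time). Set $B:=\cO_n/\mathfrak{p}$, an analytic local domain. The whole statement will follow once we produce, for a generic linear choice of coordinates $(z',z'')=(z_1,\dots,z_d;z_{d+1},\dots,z_n)$, a finite injective ring homomorphism $\cO_d\hookrightarrow B$, and then read it off geometrically.

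The core step constructs this injection by iterated Weierstrass preparation. If $\mathfrak{p}\neq(0)$, pick $0\neq f\in\mathfrak{p}$; after a generic linear change of $z_n$ (the coordinate changes for which $f$ fails to be $z_n$-regular of finite order form a proper algebraic subset of the group of linear coordinate changes, so finitely many such constraints still leave a dense open set of admissible systems), Weierstrass preparation produces a monic polynomial $P_n(z_1,\dots,z_{n-1};z_n)\in\mathfrak{p}$ of some degree in $z_n$, and then $\cO_{n-1}\hookrightarrow\cO_n/(P_n)\twoheadrightarrow B$ exhibits $B$ as a finite module over $\cO_{n-1}/(\mathfrak{p}\cap\cO_{n-1})$. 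Iterating the reduction in the number of variables and stopping exactly when $\mathfrak{p}\cap\cO_d=(0)$ --- which by the dimension theory of analytic local rings occurs precisely for $d=\dim A_j$ --- yields a finite injection $\cO_d\hookrightarrow B$ together with Weierstrass polynomials $P_n,\dots,P_{d+1}$ in $\mathfrak{p}$, each monic in $z_\ell$ over $\cO_{\ell-1}$.

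I would then translate this into geometry. For suitably small polydiscs $\Delta''\subset\C^{n-d}$ in the $z''$-variables and $\Delta'\subset\C^{d}$ in the $z'$-variables, the polynomials $P_n,\dots,P_{d+1}$ cut out in $\Delta'\times\Delta''$ a closed analytic set whose projection to $\Delta'$ is a finite proper branched cover, since for $z'$ near $0$ each $P_\ell$ has exactly $\deg P_\ell$ roots in $z_\ell$, all lying in $\Delta''$ after shrinking. The germ $A_j$ is contained in this set, and finiteness of $\cO_d\hookrightarrow B$ forces $\pi_j\colon A_j\cap(\Delta'\times\Delta'')\to\Delta'$ to be finite, proper and surjective as well (after a final shrinking), i.e.\ a ramified cover. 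I expect this algebra-to-geometry passage to be the main point requiring care: one must track, after each Weierstrass step, how much the polydiscs must be shrunk so that the composite projection stays finite and proper, and check that $\pi_j$ restricted to $A_j$ --- not merely to the possibly larger common zero set of the $P_\ell$ --- remains surjective onto $\Delta'$.

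For the \'etale statement I would invoke the primitive element theorem. In characteristic zero, after one further generic linear change of the fibre coordinates the finite field extension $K:=\mathrm{Frac}(\cO_d)\hookrightarrow L:=\mathrm{Frac}(B)$ is generated by $\xi:=$ the class of $z_{d+1}$, so $L=K[T]/(g)$ with $g\in\cO_d[T]$ monic irreducible of degree $k=[L:K]$ (the coefficients lie in $\cO_d$ because $\cO_d$ is a UFD, hence integrally closed, and $\xi$ is integral over it). Put $\delta:=\mathrm{disc}(g)\in\cO_d\setminus\{0\}$ and $S:=\{\delta=0\}\subset\Delta'$, an analytic subset of codimension $\geq 1$. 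Since $\delta$ annihilates the conductor of $\cO_d[\xi]=\cO_d[T]/(g)$ in $B$, the subring $\cO_d[\xi]$ and $B$ have the same localization away from $S$; hence over $\Delta'\setminus S$ the cover $\pi_j$ coincides with $V(g)\to\Delta'\setminus S$, which is \'etale of degree $k$ because $g$ has $k$ distinct roots there. The remaining genericity claims are routine, each one excluding a proper algebraic subset of linear coordinate changes.
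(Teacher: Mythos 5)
The paper does not prove this lemma; it simply cites the local parametrization theorem (Theorem 4.19, Chap.~II of \cite{agbook}) as a standard result. Your sketch is a correct rendition of the classical argument --- reduction to a prime $\mathfrak{p}$, iterated Weierstrass preparation giving a finite injection $\cO_d\hookrightarrow\cO_n/\mathfrak{p}$ stopping at $d=\dim A_j$, and the primitive-element/discriminant argument for the \'etale locus $\Delta'\setminus S$ --- which is precisely the proof given in the cited reference.
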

\begin{mylem}
Let $A$ be a compact analytic subset of a complex manifold~$M$. Assume that $\dim_{\C} A=d$ and $\dim_{\C} M=n$.
Let $(W_\nu)$ be relatively compact coordinate charts which form a finite open covering of $A$.
Without loss of generality, assume that $W_\nu$ is taken to be relatively compact in some larger coordinate chart, and is the coordinate chart provided by the local parametrization theorem.
Then there exists $C >0$ such that for $r >0$ small enough, the open neighbourhood $\bigcup_{\nu}\{x \in W_\nu, d(A, x)<r \}$ of $A$ can be covered by at most $\frac{C}{r^{2d}}$ balls of radius $r$.
Here the distance is calculated by the coordinate distance in each coordinate chart.
\end{mylem}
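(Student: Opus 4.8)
The plan is to reduce the statement to a \emph{packing estimate} for $A$ itself. Fix a chart $W_\nu$, choose an intermediate open set $W'_\nu$ with $W_\nu\Subset W'_\nu$ and $W'_\nu$ relatively compact in the larger coordinate chart, and let $\rho_0>0$ be less than the distance from $\overline{W'_\nu}$ to the complement of that larger chart. If $\{a_i\}$ is a maximal $r$-separated subset of $A\cap W'_\nu$ (with $r\le\rho_0$), then by maximality every point of $A\cap W'_\nu$ lies at distance $<r$ from some $a_i$, so the balls $B(a_i,r)$ cover $A\cap W'_\nu$; consequently the balls $B(a_i,2r)$ cover $\{x\in W_\nu:d(A,x)<r\}$ as soon as $r<d(\overline{W_\nu},\complement W'_\nu)$. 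Since any ball of radius $2r$ in the ambient $\C^n$ is itself covered by at most a dimensional constant $\kappa(n)$ of balls of radius $r$, it suffices to bound $\#\{a_i\}$ by $C_\nu\,r^{-2d}$ and then to sum over the finitely many charts.

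To handle the fact that $A$ is in general singular and not equidimensional, write $A=\bigcup_{d'\le d}A_{d'}$, where $A_{d'}$ is the union of the irreducible components of $A$ of dimension $d'$. Then $d(A,x)=\min_{d'}d(A_{d'},x)$, so it is enough to cover each $\{x\in W_\nu:d(A_{d'},x)<r\}$ by at most $C_\nu\,r^{-2d'}\le C_\nu\,r^{-2d}$ balls of radius $r$ (the last inequality for $r\le1$), and we may therefore assume $A$ is pure of dimension $d'$. This is where the local parametrization structure assumed for $W_\nu$ is used: by Lelong's theory of integration on analytic sets (\cite{Lel}, and Chap.~III of \cite{agbook}), which rests precisely on the local parametrization theorem, $A$ has locally finite $2d'$-dimensional volume, so $M_\nu:=\mathrm{vol}_{2d'}\big(A\cap(\rho_0/2\text{-neighbourhood of }\overline{W'_\nu})\big)<\infty$; moreover Lelong's monotonicity formula asserts that $\rho\mapsto\rho^{-2d'}\,\mathrm{vol}_{2d'}(A\cap B(a,\rho))$ is non-decreasing in $\rho$ with limit as $\rho\to0^+$ equal to $c_{d'}$ times the multiplicity of $A$ at $a$, an integer $\ge1$, where $c_{d'}=\pi^{d'}/d'!$ is the Euclidean volume of the unit ball in $\R^{2d'}$. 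Hence $\mathrm{vol}_{2d'}(A\cap B(a,\rho))\ge c_{d'}\,\rho^{2d'}$ for every $a\in A\cap\overline{W'_\nu}$ and every $\rho\le\rho_0$.

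The packing bound is then immediate. For $r\le\rho_0$ the balls $B(a_i,r/2)$ are pairwise disjoint (their centres are $r$-separated) and contained in the $\rho_0/2$-neighbourhood of $\overline{W'_\nu}$, so
$$\#\{a_i\}\cdot c_{d'}\,(r/2)^{2d'}\;\le\;\sum_i\mathrm{vol}_{2d'}\big(A\cap B(a_i,r/2)\big)\;=\;\mathrm{vol}_{2d'}\Big(A\cap\bigcup_iB(a_i,r/2)\Big)\;\le\;M_\nu,$$
whence $\#\{a_i\}\le (2^{2d'}/c_{d'})\,M_\nu\,r^{-2d'}$. By the first paragraph, $\{x\in W_\nu:d(A_{d'},x)<r\}$ is then covered by at most $\kappa(n)\,(2^{2d'}/c_{d'})\,M_\nu\,r^{-2d'}$ balls of radius $r$; summing over the finitely many values $d'\le d$ and over the finitely many charts $\nu$, and using $r^{-2d'}\le r^{-2d}$ for $r\le1$, yields the constant $C$ of the statement.

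I do not expect a genuine obstacle here: the substantive inputs are the local finiteness of the $2d$-dimensional volume of analytic sets and Lelong's monotonicity formula, both of which are standard consequences of the local parametrization theorem already invoked in the hypothesis. The two points that do require attention are the reduction to pure dimension — without it the lower bound $\mathrm{vol}_{2d'}(A\cap B(a,\rho))\ge c_{d'}\rho^{2d'}$ fails at points lying only on lower-dimensional components, where the relevant $2d'$-volume vanishes identically — and the routine bookkeeping of radii and constants ensuring that all the auxiliary balls remain inside the coordinate charts.
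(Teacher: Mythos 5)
Your proof is correct, but it follows a genuinely different route from the paper's. Both arguments restrict to a chart, pick a maximal $r$-separated set $\{a_i\}\subset A$, and use the standard packing/covering duality: the balls $B(a_i,2r)$ cover the tube $\{d(A,\cdot)<r\}$, while the balls $B(a_i,r/2)$ are disjoint. The difference lies in the volume estimate used to count $\{a_i\}$. The paper bounds the \emph{ambient $2n$-dimensional Lebesgue volume} of the tube $\{x\in W_\nu:d(A,x)<r\}$ by $Cr^{2n-2d}$, via an induction on $\dim A$ that uses the local parametrization theorem explicitly: the branch locus and lower-dimensional irreducible components contribute $O(r^{2n-2d+2})$ by the inductive hypothesis, and the top-dimensional sheets are absorbed into fattened graphs $A_j + \sum_{i>d}\mathbb{D}(0,r)e_i$ of volume $O(r^{2n-2d})$; the count then follows from $\#\{a_i\}\cdot c_n(r/2)^{2n}\le \Vol(\text{tube})$. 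You instead work with the $2d$-dimensional Hausdorff volume \emph{of $A$ itself}: after reducing to pure dimension, you invoke Lelong's monotonicity formula to obtain the pointwise lower bound $\mathrm{vol}_{2d'}(A\cap B(a,\rho))\ge c_{d'}\rho^{2d'}$, and combine it with disjointness of the $B(a_i,r/2)$ and local finiteness of $\mathrm{vol}_{2d'}(A)$ to get $\#\{a_i\}\le C r^{-2d'}$ directly. Each approach trivially implies the other's auxiliary estimate once the ball count is known (covering the tube by $Cr^{-2d}$ balls of radius $r$ gives the volume bound $Cr^{2n-2d}$, and vice versa), so they are logically equivalent; the genuine difference is the substantive input. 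Your route, leaning on Lelong's named theorem as a black box, is shorter and does not in fact require the chart to be the one furnished by the local parametrization theorem; the paper's inductive argument is more self-contained, re-deriving the needed estimate directly from the parametrization structure it has already set up.
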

\begin{proof}
It is enough to prove this for each $W_\nu$.
We verify that the volume of the open set $\{x \in W_\nu, d(A, x)<r \}$ has an upper bound $C r^{2n-2d}$ for $r$ small enough.
We take in each local tubular neighbourhood a maximal family of points with mutual coordinate distance $\ge r$.
For $r$ small enough, every point is at distance $\leq r$ to at least one of the centres, otherwise the family of points would not be maximal.
In particular, balls of radius $2r$ centered at these points  cover the tubular neighbourhood.
On the other hand, balls of radius $r/2$ centered at these points are disjoint.
Therefore, the number of such balls $N_\nu$ satisfies the relation
$$c_n N_\nu\Big(\frac{r}{2}\Big)^{2n} \leq \Vol(\{x \in W_\nu, d(A, x)<r \}) \leq Cr^{2n-2d}.$$
Here $c_n$ is the volume of the unit ball in $\C^n$.
The lemma follows from the inequality.

The proof of the volume estimate for the tubular neighbourhood is obtained by induction on the dimension of the analytic set $A$.
When $d=0$, i.e.\ when $A$ consists of a finite set, the estimate is trivial.
Assume that we have already proven the result for all analytic sets of dimension $d \leq \dim_{\C}(A)-1$. Then, we use the local parametrization theorem and the fact that $A \cap \pi^{-1}(S)$ is a proper analytic set of $A \cap W_\nu$.
By the induction hypothesis, we have
$$\Vol(\{x \in W_\nu, d(A\cap \pi^{-1}(S), x)<r \}) \leq Cr^{2n-2d+2},$$
and a similar estimate holds for the open set of points with distance${}<r$ to the irreducible components of $A$ of dimension $\leq d-1$.
On the other hand, $A \cap \pi^{-1}(\Delta' \setminus S)$ is contained in the union of $A_{j}+\sum_{i=d+1}^n \mathbb{D}(0,r)e_i$  where $e_i$ is the standard basis of $\C^n$ and $\mathbb{D}(0,r)$ is the disc in $\C$ centered at 0 of radius r.
Here $A_j$ are the irreducible components of dimension d of $A$ intersecting $\pi^{-1}(\Delta' \setminus S)$.
Each open set  $A_{j}+\sum_{i=d+1}^n \mathbb{D}(0,r)e_i$ has volume equal to $c(n,d)\Vol(A_j) r^{2n-2d}$ where $c(n,d)$ is the volume of the unit disc in $\C^{n-d}$.
This is because that $\pi$ induces a biholomorphism between  $A_{j}+\sum_{i=d+1}^n \mathbb{D}(0,r)e_i$ and  $\Delta' \times 0+\sum_{i=d+1}^n \mathbb{D}(0,r)e_i$
which preserves the Lebesgue volume form.
On the other hand the tubular neighbourhood of $A$ $\{d(x,A)<r\}$ is included in the union of the union of $A_{j}+\sum_{i=d+1}^n \mathbb{D}(0,r)e_i$, the open set of points whose distance to the dimension $\leq d-1$  irreducible components of $A$ $<r$ and $\{x \in W_\nu, d(A\cap \pi^{-1}(S), x)<r \}$ from which the estimate follows.
\end{proof}
\begin{myprop}{\it
Let $T$ be a $(k,k)$-closed positive current in the cohomology class $\alpha$, over a compact K\"ahler manifold $(X, \omega)$. Let $A$ be an analytic subset of $X$ of dimension $d$.
There exists a sequence of open neighbourhoods $U(r)$ of $A$ $($independent of $T\,)$ such that $\bigcap_{r >0} U(r)=A$ and the volume of $U(r)$ is at most $C r^{2n-2d}$, with a constant $C$ independent of $T$.
Moreover there exists $C'$ independent of $T$ such that
$$\int_{U(r)} T \wedge \omega^{n-k} \leq C'r^{2n-2k-2d}.$$ 
Here $C'$ depends on $\alpha$, $(X, \omega)$ and $A$.}
\end{myprop}
\begin{proof}
This is a direct consequence of Proposition 10 and Lemma 12.
\end{proof}
Remark that in particular, if $A$ is codimension at least $k+1$, the contribution of mass of $T$ on $U(r)$ vanishes asymptotically as $r \to 0$, and the above Proposition holds uniformly for all positive currents $T$ in the cohomology class~$\alpha$.
The codimension condition is optimal since that the mass of the current $[A]$ associated with a $k$-dimensional analytic set $A$ does not vanish in the limit.

Now we return to the construction of positive currents in the Segre classes.
Observe that a codimension condition is needed to ensure the existence of such closed positive currents; this is shown by the following easy example.
\begin{myex}{\rm 
Let $X$ be the blow up of $\P^2$ at some point and let $D$ be the exceptional divisor.
Consider the vector bundle $E:= \cO(D)^{\oplus r}$ of rank $r \geq 2$ over $X$. Corollary 3 shows that $E$ is a strongly psef vector bundle as a direct sum of strongly psef line bundles.


An equivalent definition of total Segre class (i.e. $\sum_k s_k(E)$) is the inverse of the total Chern class.
Remark that for any vector bundles $E, F$, the total Chern class satisfies the axiom 
$c(E \oplus F)=c(E)c(F)$.
Thus the same relation holds for the total Segre class since the cohomological ring is commutative.
In particular,
$s(E)=s(\cO(D))^r$
with $s_2(E)= {{r}\choose{2}}(c_1 (\cO(D))^2)=- {{r}\choose{2}}$.
Thus there exists no closed positive current in the class $s_2(E)$.
}
\end{myex}

For the convenience of the reader, we recall the definition of a Finsler metric on a vector bundle, as introduced in \cite{Kob} (cf.\ also \cite{Dem99}).
\begin{mydef}
A (positive definite) Finsler metric on a holomorphic vector bundle $E$ is a positive complex homogeneous function
$$\xi \to \| \xi \|_x$$
defined  on  each  fibre $E_x$,  that  is,  such  that $\|\lambda\xi \|_x=|\lambda|\| \xi \|_x$ for each $\lambda \in \C$ and $\xi \in E_x$, and $\|\xi\|_x>0$ for $\xi \neq 0$.
\end{mydef}
We say that the metric is smooth if it is smooth outside of the zero section on the total space of $E$.
Observe that a Finsler metric on a line bundle $L$ is the same as a Hermitian metric on $L$.
A Finsler metric on $E^*$ can also be viewed as a Hermitian metric $h^*$ on the line bundle 
$\cO_{\P(E)}(-1)$ (as the total space of $\cO_{\P(E)}(-1)$ coincides with the blow-up of $E^*$ along the zero section).
In particular, $\cO_{\P(E)}(1)$ carries a smooth Hermitian metric of positive Chern curvature form if and only if
$E$ carries a smooth Finsler metric 
whose logarithmic indicatrix defined by 
$$\chi(x, \xi):=\log \| \xi \|_x$$
is plurisubharmonic on the total space.
Let us observe that the logarithmic indicatrix has a pole along the zero section and can be extended as a global psh function on the total space, even though it is a priori psh only outside of the zero section.

Assume that we have a smooth Hermitian metric on $(E,h)$ rather than just a Finsler metric on $E$, and let us consider the corresponding  Hermitian metric on $\cO_{\P(E)}(1)$. We have the following calculation, which can be seen as a direct consequence of intersection theory, and is still valid on the level of forms without passing to cohomology classes: for every $k \in \N$
$$\pi_* \left(\frac{i}{2 \pi} \Theta (\cO_{\P(E)}(1),h)\right)^{r+k}= s_k(E,h).$$
Note that the Segre classes can be written in terms of Chern classes and the Chern classes can be represented by the Chern forms derived from the curvature tensor. For our application, we only detail the calculation for the case $k=1$ that we need.
For the general case, we refer for example to the papers \cite{Div16}, \cite{Gul12} and \cite{Mou04}.
The author thanks Simone Diverio for the references.
\begin{mylem}
Let $E$ be a holomorphic vector bundle of rank $r$ on a $($non necessarily compact$\,)$ complex manifold $X$. Let $\pi$ be the canonical projection $\P(E) \to X$.
Assume that $E$ is endowed with a smooth Hermitian metric $h$, and consider the induced metrics on $\cO_{\P(E)}(1)$ and $\det (E)$ $($which we still denote by $h)$.
Then
$$\pi_* \left(\frac{i}{2 \pi} \Theta (\cO_{\P(E)}(1),h)\right)^r= \frac{i}{2 \pi} \Theta (\det(E),\det(h))$$
where $\Theta$ means the curvature tensor.
\end{mylem}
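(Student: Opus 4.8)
The identity is local on $X$, and both sides are smooth $(1,1)$-forms there: the left-hand side because $\big(\tfrac{i}{2\pi}\Theta(\cO_{\P(E)}(1),h)\big)^r$ is a smooth $(r,r)$-form on $\P(E)$ and $\pi_*$ is fibre integration over the compact fibres $\pi^{-1}(x)\cong\P^{r-1}$, so it produces a smooth $(1,1)$-form on $X$. Hence it suffices to check equality at an arbitrary point $x_0\in X$. I would fix $x_0$, choose local holomorphic coordinates $(z_1,\dots,z_n)$ centred at $x_0$ and a holomorphic frame $(e_1,\dots,e_r)$ of $E$ that is \emph{normal} at $x_0$: the Gram matrix $H(z)=\big(\langle e_\lambda,e_\mu\rangle_h\big)$ then satisfies $H(0)=\Id$, $dH(0)=0$, and writing $H(z)=\Id+\sum_{j,k}A_{jk}\,z_j\bar z_k+O(|z|^3)$ with Hermitian matrices $A_{jk}$, one gets $\tfrac{i}{2\pi}\Theta(E,h)_{x_0}=-\sum_{j,k}A_{jk}\,\tfrac{i}{2\pi}\,dz_j\wedge d\bar z_k$; taking traces, $\tfrac{i}{2\pi}\Theta(\det E,\det h)_{x_0}=-\sum_{j,k}\tr(A_{jk})\,\tfrac{i}{2\pi}\,dz_j\wedge d\bar z_k$.

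Next I would compute $\tfrac{i}{2\pi}\Theta(\cO_{\P(E)}(1),h)$ along $\pi^{-1}(x_0)$. In homogeneous fibre coordinates $w=(w_1:\dots:w_r)$, the metric induced by $h$ on $\cO_{\P(E)}(-1)\subset\pi^*E^*$ gives $\cO_{\P(E)}(1)$ the local potential $\varphi(z,w)=\log\big(\sum_{\lambda,\mu}(H^{-1})_{\lambda\bar\mu}(z)\,w_\lambda\bar w_\mu\big)$. Substituting $H^{-1}(z)=\Id-\sum_{j,k}A_{jk}z_j\bar z_k+O(|z|^3)$, expanding the logarithm, and differentiating at $z=0$, the mixed $dz\wedge d\bar w$ terms drop out along $\pi^{-1}(x_0)$ and one obtains
\[
\tfrac{i}{2\pi}\Theta(\cO_{\P(E)}(1),h)\big|_{\pi^{-1}(x_0)}=\omega_{\rm FS}-\sum_{j,k}\frac{\sum_{\lambda,\mu}(A_{jk})_{\lambda\mu}\,w_\lambda\bar w_\mu}{\sum_\nu|w_\nu|^2}\,\tfrac{i}{2\pi}\,dz_j\wedge d\bar z_k=:\omega_{\rm FS}+\beta(w),
\]
where $\omega_{\rm FS}$ is the Fubini--Study form on the fibre normalised so that $\int_{\P^{r-1}}\omega_{\rm FS}^{r-1}=1$ (this is formula (15.15) of Chap.~V in \cite{agbook}) and, for each fixed $[w]$, $\beta(w)$ is a $(1,1)$-form with constant coefficients pulled back from $X$.

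Then I would expand $\big(\omega_{\rm FS}+\beta(w)\big)^r$. As $\omega_{\rm FS}$ involves only the $r-1$ fibre directions and $\beta(w)$ only the $n$ base directions, $\omega_{\rm FS}^r=0$, and for $j<r-1$ the term $\binom{r}{j}\omega_{\rm FS}^{\,j}\wedge\beta(w)^{r-j}$ has fibre-bidegree $(j,j)\neq(r-1,r-1)$, so it vanishes after fibre integration; only the $j=r-1$ term survives, giving $\pi_*\big((\tfrac{i}{2\pi}\Theta(\cO_{\P(E)}(1),h))^r\big)_{x_0}=r\int_{\P^{r-1}}\omega_{\rm FS}^{r-1}\wedge\beta(w)$. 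By unitary invariance of $\omega_{\rm FS}$ one has $\int_{\P^{r-1}}\omega_{\rm FS}^{r-1}\,\frac{w_\lambda\bar w_\mu}{\sum_\nu|w_\nu|^2}=\tfrac1r\delta_{\lambda\mu}$, hence $\int_{\P^{r-1}}\omega_{\rm FS}^{r-1}\wedge\beta(w)=-\tfrac1r\sum_{j,k}\tr(A_{jk})\,\tfrac{i}{2\pi}\,dz_j\wedge d\bar z_k=\tfrac1r\,\tfrac{i}{2\pi}\Theta(\det E,\det h)_{x_0}$, and multiplying by $r$ gives the assertion.

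The routine ingredients are the two Fubini--Study integrals and the multilinear bookkeeping in the last step. The step I expect to be the only real (though mild) obstacle is the normal-frame curvature computation for $\cO_{\P(E)}(1)$ — in particular, verifying cleanly that along $\pi^{-1}(x_0)$ there is no surviving second-fundamental-form contribution, so that the curvature splits exactly as $\omega_{\rm FS}+\beta(w)$ — together with keeping track of the sign conventions implicit in the identification of $\P(E)$ with the bundle of lines in $E^*$. This can be bypassed entirely by quoting the curvature formula (15.15) of Chap.~V of \cite{agbook}, which provides precisely this splitting.
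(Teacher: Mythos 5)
Your proposal is correct and follows essentially the same route as the paper: use the curvature splitting of $\cO_{\P(E)}(1)$ from formula (15.15) of Chap.~V of \cite{agbook} (the paper cites it directly, you re-derive it via a normal frame and then note it can be quoted), observe that only the $\omega_{FS}^{r-1}$ term survives fibre integration, and finish with the fibrewise average $\int_{\P^{r-1}}\omega_{FS}^{r-1}\,\frac{w_\lambda\bar w_\mu}{|w|^2}=\tfrac1r\delta_{\lambda\mu}$. The only cosmetic difference is that the paper computes this last integral by pulling back to the unit sphere via the Hopf fibration and invoking symmetry, while you appeal to unitary invariance of $\omega_{FS}$; these are two phrasings of the same computation.
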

\begin{proof}
To start with, we recall formula (15.15) of Chap.~V in \cite{agbook}, expressing the curvature of $\cO(1)$ for the projectivisation of a vector bundle.
Let $(e_{\lambda})$ be a normal coordinate frame of $E$ at $x_0 \in X$ and let
$$i \Theta(E)_{x_0}=\sum c_{jk \lambda \nu} i dz_j \wedge d\overline{z}_k \otimes e_\lambda^* \otimes e_\nu$$
be the curvature tensor of $E$.
At any point $a \in \P(E)$ represented by a vector $\sum_{\lambda} a_\lambda e_\lambda^* \in E^*_{x_0}$ of norm 1,
the curvature of $\cO_{\P(E)}(1)$ is
$$\Theta(\cO_{\P(E)}(1))_a=\sum c_{jk\nu \lambda} a_\lambda \overline{a}_\nu dz_j \wedge d\overline{z}_k+\sum_{1 \leq \lambda \leq  r-1} d\xi_\lambda \wedge d\overline{\xi}_\lambda,$$
where $(\xi_\lambda)$ are the coordinates near $a$ on $\P(E)$, induced by unitary coordinates of the hyperplane $a^{\perp} \subset E^*_{x_0}$.
In other words, if $\P(E|_U)$ is locally isomorphic to $U \times \P^{r-1}$ with coordinates $(z, [\xi])$, we have a canonical projection $pr_2: \P(E) \to \P^{r-1}$ and the curvature at $(z, [\xi])$ is given by
$$\frac{i}{2 \pi}\Theta(\cO_{\P(E)}(1))(z, [\xi])=-\frac{\langle \frac{i}{2 \pi} \Theta_{E^*} \xi, \xi \rangle_h}{\langle \xi, \xi \rangle_h}+pr_2^* \omega_{FS}$$
where $\omega_{FS}$ is the Fubini-Study metric on $\P^{r-1}$.
Therefore we have
$$\pi_* \left(\frac{i}{2 \pi} \Theta (\cO_{\P(E)}(1),h)\right)^r=-r \int_{\P^{r-1}} \frac{\langle \frac{i}{2 \pi} \Theta_{E^*} \xi, \xi \rangle_h}{\langle \xi, \xi \rangle_h} \wedge
 \omega_{FS}^{r-1}.$$
Observe that $\P^{r-1} \cong S^{2r-1}/S^1$ by the Hopf fibration. The Fubini-Study metric is the metric induced on the quotient $\P^{r-1}$ by the restriction of the standard Euclidean metric to the unit hypersphere. We denote by $d\sigma$ the volume form of the standard Euclidean metric restricted to that sphere. Then we have
$$\pi_* \left(\frac{i}{2 \pi} \Theta (\cO_{\P(E)}(1),h)\right)^r=-r \int_{S^{r-1}} \langle \frac{i}{2 \pi} \Theta_{E^*} \xi, \xi \rangle_h \wedge
 d\sigma.$$
Note that for a Hermitian form $Q(\xi, \xi)= \sum \lambda_i |\xi_i|^2$ we have
$$\int_{S^{r-1}} Q(\xi,\xi)d\sigma(\xi)=\frac{1}{r}\tr(Q)=\frac{1}{r} \sum \lambda_i,$$
since $\int_{S^{r-1}} |\xi_i|^2 d\sigma(\xi)=\frac{1}{r}$ by symmetry.
Thus we get
$$\pi_* \left(\frac{i}{2 \pi} \Theta (\cO_{\P(E)}(1),h)\right)^r=-\tr_{\xi} \langle \frac{i}{2 \pi} \Theta_{E^*} \xi, \xi \rangle_h=\frac{i}{2 \pi} \Theta (\det(E),h).$$
\end{proof}
As a direct consequence of the above formula, if $h$ is a smooth semi-positive metric on $\cO_{\P(E)}(1)$, the induced metric on $\det(E)$ is also semi-positive.
This is the positive form what we want.
More generally, the forms 
$\pi_* \left(\frac{i}{2 \pi} \Theta (\cO_{\P(E)}(1),h)\right)^{r+k}= s_k(E,h)$
are smooth positive currents in the $k-$th Segre class.
Hence if $h$ is a smooth semi-positive metric on $\cO_{\P(E)}(1)$, we can find positive forms in the Segre classes, which we will call Segre forms (or Segre currents) in the sequel.

In the case where the metric is singular, the construction is more complicated.
The difficulty is that Monge-Amp\`ere operators are not always well-defined
for arbitrary closed positive currents.

In general, for a strongly psef vector bundle, in order to get a singular metric with analytic singularities, we have to allow a bounded negative part. Accordingly, we have to work in a more general setting.
Let $E$ be a vector bundle of rank $r$ on a compact K\"ahler manifold $(X, \omega)$, and let $T$ be a closed positive $(1,1)$-current on $\P(E)$, in the cohomology class of a fixed closed smooth form $\alpha$.
Notice that that the restriction of the cohomology class $\{\alpha\}$ is constant on any fibre of $\pi:\P(E)\to X$.
A~typical case is $\{\alpha\}=c_1(\cO_{\P(E)}(1))+C \pi^* \omega$ for some $C \geq 0$.
Write 
$$T=\alpha+i \d\dbar \varphi.$$
Assume that $\varphi$ is smooth over $\P(E) \setminus A$ where $A$ is an analytic set in $\P(E)$ such that $A= \pi^{-1}(\pi(A))$ and $\pi(A)$ is of codimension at least $k$ in $X$. We wish to define a current $\pi_* T^{r-1+k}$.
A priori, this Monge-Amp\`ere operator is not well defined by just invoking the codimension condition, since the exponent $r-1+k$ is larger than the codimension~$k$.
This problem can be overcome by defining the desired current as a weak limit of a sequence of less singular currents, in such a way that the limit is still unique. 

Let $\psi$ be a quasi-psh function on $\P(E)$ that is smooth outside an analytic set $A'$ such that $A'$ is of dimension at most $n-k-1$.
In other words, the codimension of $A'$ in $\P(E)$ is at least $k+r$.
This implies that the codimension of $\pi(A')$ in $X$ is at least $k+1$.
Then the Monge-Amp\`ere operator $(\alpha+i \d \dbar \log(e^\varphi+\delta e^\psi))^{r-1+k}$ is well defined for every $\delta >0$, as a consequence of Demailly's techniques \cite{Dem92}.
Thus, by a weak compactness argument, the sequence of currents 
$$\pi_* (\alpha+i \d \dbar \log(e^\varphi+\delta_\nu e^\psi))^{r-1+k}$$
which all belong to the cohomology class $\pi_* \alpha^{r-1+k}$,
has a weak limit as $\delta_\nu \to 0$ for some subsequence.
Observe that if we take $\psi=0$, for any $\delta >0$, the function $\log(e^{\varphi}+\delta)$ is a bounded quasi-psh function.
In that case the wedge product
$$\pi_* (\alpha+i \d \dbar \log(e^\varphi+\delta))^{r-1+k}$$
is already well defined as a current  by the work of \cite{BT}. However, we want the flexibility of choosing a non constant potential $\psi$ in order to get quasi-psh functions with isolated singularities that can be used to get Lelong number estimates. Note that since all currents involved are closed, the limit current is still closed.

Now, we show that the limit is uniquely defined. The intuition is as follows.
As we have observed at the end of Proposition 11, the family of currents indexed by $\delta$ has a contribution of mass 0 along the singular part of $\pi(A')$, and we can therefore guess that the limit should be independent of the choice of $\psi$. (Nevertheless, without passing to the limit, each current may still have a positive Lelong number at some point of $\pi(A')$.)
\begin{mylem}
The limit current is independent of the choice of the smooth representative $\alpha$, as well as of the choice of $\psi$.
\end{mylem}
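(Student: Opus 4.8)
The plan is to deduce independence of the smooth representative $\alpha$ from independence of $\psi$, and then to establish the latter by combining Bedford--Taylor--Demailly continuity off the ``bad'' analytic set with a support-theorem argument on it. For the first reduction: since $\P(E)$ is compact K\"ahler, any two smooth forms in $\{\alpha\}$ differ by $i\ddbar f$ with $f\in C^\infty(\P(E))$; if $\alpha'=\alpha+i\ddbar f$ the potential of $T$ relative to $\alpha'$ is $\varphi-f$ up to a constant, and a direct computation gives
$$\alpha'+i\ddbar\log\!\left(e^{\varphi-f}+\delta e^{\psi}\right)=\alpha+i\ddbar\log\!\left(e^{\varphi}+\delta e^{\psi+f}\right),$$
so the approximating sequence built from $(\alpha',\psi)$ equals, term by term, the one built from $(\alpha,\psi+f)$. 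As $\psi+f$ is again quasi-psh and smooth outside the same analytic set $A'$, it suffices to show the limit does not depend on $\psi$.

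To do this I would fix two admissible weights $\psi_1,\psi_2$, with singular loci $A'_1,A'_2$, and set $u_\delta^i:=\log(e^\varphi+\delta e^{\psi_i})$. Each $u_\delta^i$ satisfies $\varphi\le u_\delta^i\le\log(e^{\sup\varphi}+\delta e^{\sup\psi_i})$, is bounded outside $A'_i$, and decreases to $\varphi$ as $\delta\downarrow 0$. On $\P(E)\setminus A$ the function $\varphi$ is smooth, hence locally bounded, so the $u_\delta^i$ are locally bounded there; the continuity statement recalled above (Proposition 10.2 of \cite{Dem93}, see also \cite{BT}) then gives $(\alpha+i\ddbar u_\delta^i)^{r-1+k}\to(\alpha+i\ddbar\varphi)^{r-1+k}$ weakly on $\P(E)\setminus A$, with a limit that does not depend on $i$. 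Because $A=\pi^{-1}(\pi(A))$, the projection restricts to a proper map $\P(E)\setminus A\to X\setminus\pi(A)$, so any two subsequential limits $S_1,S_2$ of the currents $\pi_*(\alpha+i\ddbar u_\delta^i)^{r-1+k}$ agree on $X\setminus\pi(A)$. It then remains to prove $S_1=S_2$ over $\pi(A)$.

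Here $S_1,S_2$ are closed positive $(k,k)$-currents in the fixed class $\pi_*\alpha^{r-1+k}$, so $R:=S_1-S_2$ is a closed normal current supported on the analytic set $\pi(A)$, of complex dimension $\le n-k$. If $\pi(A)$ has codimension $>k$ everywhere we are done at once; in general, Federer's support theorem for normal currents gives $R=\sum_j\lambda_j[Z_j]$, the $Z_j$ being the codimension-$k$ irreducible components of $\pi(A)$ and $\lambda_j\in\R$ constant along each $Z_j$, lower-dimensional components carrying no mass of $R$. So I must show every $\lambda_j$ vanishes. Picking a smooth point $x$ of $Z_j$ that lies on no other component of $\pi(A)$ and off $\pi(A'_1)\cup\pi(A'_2)$ (possible, as the latter has codimension $\ge k+1$), one has $\lambda_j=\nu(R,x)=\nu(S_1,x)-\nu(S_2,x)$, so it suffices to prove $\nu(S_1,x)=\nu(S_2,x)$. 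Over a small ball $B$ about $x$ the bundle trivialises, $\pi^{-1}(\overline B)\cong\overline B\times\P^{r-1}$, the weights $\psi_1,\psi_2$ are smooth, and hence the local potentials obey $|u_\delta^1-u_\delta^2|\le C$ uniformly in $\delta$; exploiting this uniform comparison together with Demailly's continuity theorem applied to as many Monge--Amp\`ere factors as the codimension permits, and with the mass-vanishing estimates for closed positive currents on shrinking neighbourhoods of analytic sets of codimension $\ge$ (degree) established earlier in this section, I would reduce the comparison of $\nu(S_1,x)$ and $\nu(S_2,x)$ to the base case $\psi=0$, getting $\lambda_j=0$ and $R=0$. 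Finally, since $\{\pi_*(\alpha+i\ddbar u_\delta)^{r-1+k}\}_\delta$ has mass fixed by its cohomology class it is weakly precompact, so the coincidence of all its subsequential limits shows the full limit exists and is independent of $\psi$ and of $\alpha$.

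The hard part is precisely the vanishing of the multiplicities $\lambda_j$ along the codimension-$k$ part of $\pi(A)$. Wherever $\varphi$ is locally bounded the statement is a clean application of Bedford--Taylor--Demailly continuity, and near the codimension-$\ge k+1$ locus $\pi(A')$ the mass of any limit current is asymptotically negligible; but along a codimension-$k$ component $Z_j$ the power $(\alpha+i\ddbar\varphi)^{r-1+k}$ is genuinely undefined on $\pi^{-1}(Z_j)$, so continuity cannot be invoked directly, and the content of the lemma is that the regularisation-dependent way in which this Monge--Amp\`ere power is extended across $\pi^{-1}(Z_j)$ has, nonetheless, a $\psi$-independent direct image on $X$. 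Making that reduction rigorous --- through the uniformly bounded difference of the local potentials and the monotonicity of the Lelong mass ratio $\rho\mapsto\rho^{-2(n-k)}\!\int_{B(x,\rho)}S_i\wedge(\tfrac{i}{2}\d\dbar|z|^2)^{n-k}$ --- is where the real work lies.
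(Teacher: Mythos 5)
Your skeleton matches the paper's: first reduce $\alpha$-independence to $\psi$-independence via the identity $\tilde\alpha+i\d\dbar\log(e^{\tilde\varphi}+\delta e^\psi)=\alpha+i\d\dbar\log(e^\varphi+\delta e^{\psi+f})$; then observe that the two candidate limits $S_1,S_2$ agree on $X\setminus\pi(A)$ by Bedford--Taylor--Demailly continuity; then use that $S_1-S_2$ is normal and supported in $\pi(A)\cup\pi(A')$, so a support theorem forces $S_1-S_2=\sum_\nu c_\nu[Z_\nu]$ with $Z_\nu$ the codimension-$k$ components of $\pi(A)$; finally show $c_\nu=0$. You have also correctly singled out the two ingredients that power the last step, namely the uniform bound $|F_\delta|\le C$ on $F_\delta:=\log\big(\tfrac{e^\varphi+\delta e^{\psi_1}}{e^\varphi+\delta e^{\psi_2}}\big)$ on a neighbourhood of a generic point of $Z_\nu$, and the small-mass estimate (Proposition 11) for positive currents on shrinking tubular neighbourhoods of analytic sets.

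What is missing, however, is the actual mechanism that converts the uniform bound on $F_\delta$ into the vanishing of $c_\nu$, and you acknowledge this yourself at the end. Your plan --- compare Lelong numbers $\nu(S_1,x)$ and $\nu(S_2,x)$ via ``Demailly continuity applied to as many factors as the codimension permits'' and then ``reduce to the base case $\psi=0$'' --- cannot be carried out directly: the codimension of $A$ in $\P(E)$ is only $k$, while the Monge--Amp\`ere exponent is $r+k-1>k$, so continuity of the unbounded-locus Monge--Amp\`ere operator is genuinely unavailable along $A$, and ``reducing to $\psi=0$'' is exactly the statement being proved. The paper's actual argument proceeds differently: it writes
$$T_{1,\delta}^{r+k-1}-T_{2,\delta}^{r+k-1}=\Big(\sum_{j}T_{1,\delta}^{\,j}\wedge T_{2,\delta}^{\,r+k-2-j}\Big)\wedge i\d\dbar F_\delta,$$
tests against $\theta\,\omega^{n-k}$ for a cutoff $\theta$ supported near a regular point of $Z_\nu$, and integrates by parts to transfer $i\d\dbar$ onto $\theta$. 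Only after this does the bound on $F_\delta$ become usable: the resulting integral splits into a piece over a tubular neighbourhood $\pi^{-1}(Z_\eta)$ of the bad set --- controlled by the mass estimate after a Stokes argument replacing the singular factors by smooth cohomologous ones --- and a piece over its complement, where $F_\delta\to 0$ locally uniformly. Without this telescoping and integration by parts, I do not see how the uniform bound on $F_\delta$ (or equivalently $|u_\delta^1-u_\delta^2|\le C$) propagates through the direct image of a Monge--Amp\`ere power to yield $c_\nu=0$; this is the genuine gap in the proposal.
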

\begin{proof}
Fix a sequence $\delta_\nu$ tending to 0 such that the weak limit corresponding to $\alpha$ and $\psi=0$ exists.
Up to taking a subsequence which preserves the weak limit, we can assume in the following that the same sequence $\delta_\nu$ gives a weak limit  for different choice of $\alpha$ and $\psi$.
We will prove that the weak limits are the same, although a priori they might be different. 

Let $\tilde{\alpha}$, $\alpha$ be two representatives in the same cohomology class. Then there exists a smooth function $f$ on $\P(E)$ such that
$$\tilde{\alpha}=\alpha+i \d \dbar f.$$
Let $\tilde{\varphi}$ be the quasi-psh function such that $T=\tilde{\alpha}+i \d \dbar \tilde{\varphi}$.
Without loss of generality, we can assume that $\tilde{\varphi}=\varphi-f$.
Thus we have
$$\pi_* (\tilde{\alpha}+i \d \dbar \log(e^{\tilde{\varphi}}+\delta_\nu e^\psi))^{r-1+k}=\pi_* (\alpha+i \d \dbar \log(e^\varphi+\delta_\nu e^{\psi+f}))^{r-1+k}.$$
Thus to prove that the limit is independent of the choice of $\alpha$, it is enough to prove that the limit is independent of $\psi$, and this is what the proof will be devoted to from now on. 
As before, let $A' \subset \P(E)$ be the proper analytic subset such that $\psi$ is smooth on $\P(E) \setminus A'$.

On the regular part $X \setminus (\pi(A)\cup \pi(A'))$, the limit current is equal to
$$\pi_* (\alpha+i \d \dbar \varphi)^{r-1+k}$$
by the continuity of Monge-Amp\`ere operators with respect to bounded decreasing sequences and the fact that the currents are smooth on the pre-image of $X \setminus (\pi(A)\cup \pi(A'))$. Thus the limit currents corresponding to different choices of $\psi$ coincide on the regular part. Now, consider a K\"ahler form $\tilde{\omega}$ on $\P(E)$ 
satisfying the conditions
$$\alpha \geq - \tilde{\omega}/2,\quad i \d \dbar \psi \geq -\tilde{\omega}/2.$$
We can assume that the restriction of $\tilde{\omega}$ over all the fibres $\P^{r-1}$ is a fixed cohomology class.
For example, we can take 
$$\tilde{\omega}=C\pi^{*} \omega+c_1(\cO_{\P(E)}(1),h_\infty)$$
for some $C\gg 0$ and for a smooth metric $h_\infty$ on $\cO_{\P(E)}(1)$ induced by a Hermitian metric on $E$. For any $ \delta>0$ we have
$$\begin{matrix}&&\kern-10pt\alpha+i \d \dbar \log( e^{\varphi}+\delta e^\psi)\hfill\\
&&\displaystyle
\geq \alpha +\frac{e^{\varphi}}{e^{\varphi}+\delta e^\psi} (i \d \dbar \varphi)+\frac{\delta e^{\psi}}{e^{\varphi}+\delta e^\psi} (i \d \dbar \psi)+\frac{\delta e^{\varphi+\psi}}{(e^{\varphi}+\delta e^\psi)^2} i \d (\psi-\varphi) \wedge \dbar(\psi- \varphi)\geq -\tilde{\omega}\kern10pt\end{matrix} \leqno(*)$$
in the sense of currents, and the lower bound is independent of $\delta$.

By adding and subtracting $\tilde\omega$ and using the Newton binomial formula,
we see that the current $(\alpha+i \d \dbar \log( e^{\varphi}+\delta e^\psi))^{r+k-1}$ can be written  as a difference of two closed positive currents equal to summations of terms
$$(\alpha+i \d \dbar \log( e^{\varphi}+\delta e^\psi)+\tilde{\omega})^{i}\wedge \tilde{\omega}^j$$
with $i+j=r+k-1$.
Since the direct image functor transforms closed positive currents into closed positive currents, $\pi_*(\alpha+i \d \dbar \log( e^{\varphi}+\delta e^\psi))^{r+k-1}$ can also be written as a difference. 
If we compute the limit as $\delta$ tends to 0 (up to taking some convergent subsequence), the limit current will be a difference of two closed positive currents, in particular, $\lim_{\nu \to \infty}\pi_*(\alpha+i \d \dbar \log( e^{\varphi}+\delta_\nu e^\psi))^{r+k-1}$ is a normal current.

Denote by $T_1$, $T_2$ the limit currents obtained with different choices of $\psi$, namely $\psi_1$ and $\psi_2$. Assume that $A'$ is the union of the singular loci of $\psi_1$ and $\psi_2$. By assumption, $\pi(A')$ is of codimension at least $k+1$ in $X$. Then $T_1-T_2$ is a normal $(k,k)$-current supported in $\pi(A) \cup \pi(A')$.
If the codimension of $\pi(A)$ in $X$ is at least $k+1$, standard support theorems imply that $T_1=T_2$. If the codimension of $\pi(A) $ in $X$ is $k$,
the support theorem yields
$$T_1-T_2= \sum_\nu c_\nu [Z_\nu]$$
where $Z_\nu$ are the codimension $k$ irreducible components of $\pi(A)$ and $c_\nu \in \R$, and there exists no components of $\pi(A')$ as its codimension is higher. We now check that the limit current is independent of the choice of $\psi$ by a Lelong number calculation, i.e.\ by showing that $c_\nu=0$.

For any $x \in Z_{\nu_0,reg} \setminus (\bigcup_{\nu \neq \nu_0} Z_\nu \cup \pi(A'))$, there exists a coordinate chart $V$ such that $x=0$, $V \Subset X \setminus \pi(A') $, and $ Z_{\nu_0}=\{z_1=\cdots=z_k=0\}$ locally.
Take a cut-off function $\theta$ supported in $V$ and define
$$T_{1, \delta}=\alpha+i \d \dbar \log( e^{\varphi}+\delta e^{\psi_1}),$$
$$T_{2, \delta}=\alpha+i \d \dbar \log( e^{\varphi}+\delta e^{\psi_2}).$$
It is enough to prove that
$$\lim_{\delta \to 0}\int_X \left( \pi_*
 T_{1, \delta}^{k+r-1}-\pi_* T_{2, \delta}^{k+r-1} \right)
\wedge \theta\omega^{n-k}=0$$
which will imply that
$$\int_X (T_1-T_2) \wedge \theta\omega^{n-k}=0. $$
By a direct calculation, we have that
$$T_{1,\delta}^{k+r-1}-T_{2,\delta}^{k+r-1}=\Bigg(\sum_{j=0}^{k+r-1} T_{1,\delta}^j \wedge T_{2, \delta}^{r+k-1-j} \Bigg)\wedge (T_{1, \delta}-T_{2, \delta})$$
$$=\Bigg(\sum_{j=0}^{k+r-1} T_{1,\delta}^j \wedge T_{2, \delta}^{r+k-1-j}\Bigg)\wedge i \d \dbar \log\bigg(\frac{e^{\varphi}+\delta e^{\psi_1}}{e^{\varphi}+\delta e^{\psi_2}}\bigg).$$
An integration by parts gives
$$\int_X \big( \pi_*
 T_{1, \delta}^{k+r-1}-\pi_* T_{2, \delta}^{k+r-1} \big)
\wedge \theta\omega^{n-k}=\int_{\P(E)}\kern-4pt i \d \dbar \theta \wedge \omega^{n-k} \wedge \Bigg(\sum_{j=0}^{r+k-1} T_{1,\delta}^j \wedge T_{2, \delta}^{r+k-1-j}\Bigg) \log\bigg(\frac{e^{\varphi}+\delta e^{\psi_1}}{e^{\varphi}+\delta e^{\psi_2}}\bigg).$$
Define
$$F_{\delta}:= \log\bigg(\frac{e^{\varphi}+\delta e^{\psi_1}}{e^{\varphi}+\delta e^{\psi_2}}\bigg),$$
which is a uniformly bounded function on $V$ since $\bar{V}$ is outside of the image of the singular locus of $\psi_1$, $\psi_2$ under $\pi$. Note also that the bound is independent of $\delta$.
Moreover, $F_{\delta}$ tends to 0 almost everywhere as $\delta \to 0$.
The convergence is locally uniform outside of the pole set $A$ of $\varphi$.

Define $Z_{ \eta}:=\{z \in V, d(z,\pi( A)) \leq \eta\}$
with respect to the K\"ahler metric $\omega$.
The volume of $Z_{ \eta}$ with respect to $\omega$ tends to 0 as $\eta \to 0$ by the assumption that  $V \cap \pi(A)$ is a smooth submanifold in $V$.
Now we separate the estimate in different terms
$$\int_X \left( \pi_*
 T_{1, \delta}^{k+r-1}-\pi_* T_{2, \delta}^{k+r-1} \right)
\wedge \theta\omega^{n-k}=\int_{\pi^{-1}(Z_{\eta})} i \d \dbar \theta \wedge \omega^{n-k} \wedge\Bigg(\sum_{j=0}^{r+k-1} T_{1,\delta}^j \wedge T_{2, \delta}^{r+k-1-j}\Bigg) F_{\delta}$$
$$+\int_{\pi^{-1}(V \setminus Z_{\eta})} i \d \dbar \theta \wedge \omega^{n-k} \wedge \Bigg(\sum_{j=0}^{r+k-1} T_{1,\delta}^j \wedge T_{2, \delta}^{r+k-1-j}\Bigg) F_{\delta},$$
and we use the Fubini theorem to perform a double integration with respect to the base direction $V \setminus Z_{\eta}$ (resp. $ Z_{\eta}$) and the fibration direction $\P^{r-1}$, for $V$ sufficiently small.
The first term in the integration is bounded by $$C \omega^{n-k+1} \wedge\Bigg(\sum_{j=0}^{r+k-1} (T_{1,\delta}+\tilde{\omega})^j \wedge (T_{2, \delta}+\tilde{ \omega})^{r+k-1-j}\Bigg)$$
with $C$ independent of $\delta$ since $F_{\delta}$ is uniformly bounded on $\bar{V}$ and $i \d \dbar \theta$ is bounded by $C \omega$ for $C$ large enough.

The currents $T_{1, \delta}$ and $T_{2, \delta}$ are not smooth on $Z_\eta$, thus some attention has to be paid to apply the Fubini theorem.
Let $U(\eta)$ (resp. $U'(\eta)$) be the open neighbourhoods of $A$ (resp. $A'$) in $\P(E)$ given by Proposition 11.
Note that $T_{1, \delta}$ and $T_{2, \delta}$ are smooth near the boundary of $U(\eta)\cup U'(\eta)$.
Without loss of generality, we can assume that $\pi^{-1}(Z_\eta)$ is contained in $U(\eta)\setminus U'(\eta)$.
Take smooth currents $\tilde{T}_{i, \delta}$ on $U(\eta) \cup U'(\eta)$ cohomologous to $T_{i, \delta}$, which coincide with $T_{i, \delta}$ ($i=1,2$) near the boundary of $U(\eta) \cup U'(\eta)$.
By Stokes' theorem, 
$$\int_{U(\eta) \cup U'(\eta)} \omega^{n-k+1} \wedge\Bigg(\sum_{j=0}^{r+k-1}(T_{1,\delta}+\tilde{\omega})^j \wedge (T_{2, \delta}+\tilde{ \omega})^{r+k-1-j}\Bigg)$$
$$=\int_{U(\eta) \cup U'(\eta)} \omega^{n-k+1} \wedge\Bigg(\sum_{j=0}^{r+k-1}(\tilde{T}_{1,\delta}+\tilde{\omega})^j \wedge (\tilde{T}_{2, \delta}+\tilde{ \omega})^{r+k-1-j}\Bigg).$$
Therefore we can apply the Fubini theorem in the right hand side since all terms are smooth.
The integral on $\pi^{-1}(Z_\eta)$ is bounded from above by the integral of the same term on $U(\eta) \cup U'(\eta)$ by the inclusion relation
$\pi^{-1}(Z_\eta)\subset U(\eta) \cup U'(\eta)$.

We first perform the integration along the fibres $\P^{r-1}$.
The integration of $ \sum_{j=0}^{r+k-1} (\tilde{T}_{1,\delta}+\tilde{\omega})^j \wedge (\tilde{T}_{2, \delta}+\tilde{ \omega})^{r+k-1-j}$ along the fibre direction is a cohomological constant since we assume that the restriction of cohomology class of $\alpha$ along each fibres is a fixed cohomology class on $\P^{r-1}$.
Thus the integral on $U(\eta)\cup U'(\eta)$ is bounded from above by $C \int_{U(\eta)\cup U'(\eta)} \omega^n$, for some $C$ independent of $\delta$.
Observe that the constant is the same as the supremum of $|F_\delta|$ on $\overline{V}$ (independent of $\delta$), since for $\eta$ small enough $\overline{V} \cap  U'(\eta) = \emptyset$.


The second term appearing in the integral is bounded by
$$\sup_{\pi^{-1}(X \setminus Z_{\eta})}|F_{\delta}| \sup_X |i \d \dbar \theta|_\omega  \omega^{n-k+1} \wedge\Bigg(\sum_{j=0}^{r+k-1} (T_{1,\delta}+\tilde{\omega})^j \wedge (T_{2, \delta}+\tilde{ \omega})^{r+k-1-j}\Bigg).$$
On $V \setminus Z_\eta$, the currents $T_{1, \delta}$ and $T_{2, \delta}$ are smooth, thus the Fubini theorem applies.
We first integrate along $\P^{r-1}$.
The integration of $ \sum_{j=0}^{r+k-1} ((T_{1,\delta}+\tilde{\omega})^j \wedge (T_{2, \delta}+\tilde{ \omega})^{r+k-1-j})$ along the fibre direction is a cohomological constant as above.
Thus the second term obtained after integrating is bounded from above by $C \sup_{\pi^{-1}(X \setminus Z_{\eta})}|F_{\delta}|$, for some $C$ independent of $\delta$.

For every $\varepsilon' >0$, there exist $\eta$ such that $C \int_{U(\eta) \cap U'(\eta)} \omega^n < \frac{\varepsilon'}{2}$. 
There also exists $\delta_0$
such that $C \sup_{X \setminus Z_{\eta}}|F_{\delta}|<\frac{\varepsilon'}{2}$ for every $\delta \leq \delta_0$.
Thus the two parts of estimate (integration on $U(\eta) \cup U'(\eta)$ and on $\pi^{-1}(V \setminus Z_\eta)$) are both bounded from above by $\frac{\varepsilon'}{2}$ for $\delta \leq \delta_0$.
This concludes the proof that the limit current is independent the choice of $\psi$.

\end{proof} 
In what follows we show that the weak limit is also independent of the subsequence $\delta_\nu$ if the weight function $\varphi$ has analytic singularities.
It seems that the independence of the weak limit does not hold in general if we only require that $\varphi$ is smooth outside an analytic set of sufficient high codimension.
However some special cases can be easily checked.
\begin{myex}{\em 
Assume that there exists some $C_2 \geq C_1 >0 $ such that
$$ C_1 \delta'_\nu \leq \delta_\nu \leq C_2 \delta'_\nu$$  up to taking some subsequence but with the same limit currents.
Then the function
$$\log\bigg(\frac{e^{\varphi}+\delta_\nu e^\psi}{e^{\varphi}+\delta'_\nu e^{\psi}}
\bigg)$$
is uniform bounded on $\mathbb{P}(E)$ (independently of $\nu$).
It is locally uniformly convergent to $0$ on $\pi^{-1}(X \setminus Z_{\eta})$.
The same arguments as above can be used to achieve the proof.

Another easy case is when the projection of the singular part of $\varphi$ is of codimension at least $k+1$.
In this case, different choices of subsequence $\delta_\nu$ will have the same closed positive limit outside an analytic set of codimension at least $k+1$.
By standard support theorems, they have to coincide over $X$.
}
\end{myex}
The case of potentials with analytic singularities comes from the following observation of Demailly.
\begin{myprop}{\it
Let $\varphi$ be a quasi-psh function with analytic singularities over on a $($connected$)$ complex
$n$-dimensional mani\-fold~$X$, and $u\in C^\infty(X)$.
Then for any exponent $p$ $(1 \leq p \leq n)$, the asymptotic limit of Monge-Amp\`ere operator $\lim_{\delta\to 0}(i \d \dbar\log(e^\varphi+\delta e^u))^p$  is always well defined as a current $($but not necessarily positive, even when $i\d\dbar\varphi\geq 0$, and the limit may depend on $u)$.}
\end{myprop}
\begin{proof} By writing $\log(e^\varphi+\delta e^u)=
\log(e^{\varphi-u}+\delta)+u$ and using a binomial expansion, it is sufficient
to consider the case $u=0$, after replacing $\varphi$ with $\varphi-u$.  
Let us now consider the divisorial case, i.e., assume that $X=\C^n$ and that $\varphi$ is of the form $\varphi=\log|f|^2+\psi$ for some holomorphic function $f=\prod_{i=1}^m z_i^{m_i} \in \cO(X)$ and $\psi \in C^{\infty}(X)$.
We can define $h=e^{\psi}$ a smooth Hermitian metric on
$L:=\cO_X$. We denote by $\nabla_h$ the associated Chern connection.

Then, for every $\delta>0$, we have 
$i\d \dbar\log(e^{\varphi}+\delta)=i\d\dbar\log(|f|^2_h+\delta)$
which converge to $i \d\dbar \varphi$ as $\delta \to 0+$.
We will define the Monge-Amp\`ere operator $(i \d \dbar \varphi)^p$ as the limit of $\big( i\d\dbar\log(|f|^2_h+\delta) \big)^p$ as $\delta \to 0+$.
For every $\delta>0$, we have
$$
i\ddbar\log(|f|^2_h+\delta)
=i\partial{\langle f,\nabla_h f\rangle\over |f|^2_h+\delta}
={i\langle \nabla_hf,\nabla_h f\rangle\over |f|^2_h+\delta}-
i{\langle \nabla_h f,f\rangle\over |f|^2_h+\delta}\wedge
{\langle f,\nabla_h f\rangle\over |f|^2_h+\delta}
+i{\langle f,\nabla^{0,1}_h\nabla_h^{1,0} f\rangle\over |f|^2_h+\delta}$$
$$={\delta\over (|f|^2_h+\delta)^2}\,i\langle \nabla_hf,\nabla_h f\rangle
- {|f|^2_h\over |f|^2_h+\delta}\,i\,\Theta_{L,h}.
$$
Now, $i\langle \nabla_hf,\nabla_h f\rangle$ is a $(1,1)$-form of rank $1$. In particular, its wedge powers of exponents${}>1$ are equal to~$0$. If we raise to power $p$, the Newton binomial formula implies
$$
\Big({i\over 2\pi}\ddbar\log(|f|^2_h+\delta)\Big)^p
={p\delta\over (|f|^2_h+\delta)^2}
\Big({|f|^2_h\over |f|^2_h+\delta}\Big)^{p-1}
\,{i\over 2\pi}\langle \nabla_hf,\nabla_h f\rangle\wedge
\Big(-{i\over 2\pi}\Theta_{L,h}\Big)^{p-1}
$$$$+
\Big({|f|^2_h\over |f|^2_h+\delta}\Big)^p
\Big(-{i\over 2\pi}\Theta_{L,h}\Big)^p.
$$
The last term
converges almost everywhere to $(-{i\over 2\pi}\Theta_{L,h})^p$, thus
it converges weakly to the same limit by the bounded convergence theorem as $\delta \to 0+$. We claim that
$$
{p\delta\,|f|_h^{2p-2}\over (|f|^2_h+\delta)^{p+1}}
\,{i\over 2\pi}\langle \nabla_hf,\nabla_h f\rangle\to [Z_f]\leqno(*)
$$
weakly, where $[Z_f]$ is the current of integration on the zero divisor of $f$.
Terms that depend on $h$ in $\nabla_hf$ are equal to $f\partial\varphi$, and
they can be seen to yield zero limits, using the Cauchy-Schwarz formula and
the fact that
$$
{p\delta\,|f|_h^{2p-2}\over (|f|^2_h+\delta)^{p+1}}\cdot |f|^2_h\le p
$$
converges to zero almost everywhere. 
In fact the limit (if it exists) is a positive current as a limit of
positive currents. It will also be closed, since
$$
\dbar\bigg({p\delta\,|f|_h^{2p-2}\over (|f|^2_h+\delta)^{p+1}}
\,{i\over 2\pi}\langle \nabla_hf,\nabla_h f\rangle\bigg)
={p\delta\,|f|_h^{2p-2}\over (|f|^2_h+\delta)^{p+1}}
\,{1\over 2\pi}\langle f,\nabla_h f\rangle\wedge\Theta_{L,h}
$$
and we can again apply a Cauchy-Schwarz argument to see that the right hand side
converges to $0$. A priori the limit current (if it exists) should be
supported on $|Z_f|$. However, at any regular point of $Z_f$ we can find local holomorphic coordinates in which $f(z)=z_1^m$, where $m$ is the multiplicity
of the irreducible component. An easy calculation yields
$$
\int_{z_1\in\C}
{p\delta\,|z_1^m|^{2p-2}\over (|z_1^m|^2+\delta)^{p+1}}
\,{i dz_1^m\wedge d\overline{z_1^m}\over 2\pi}=m.\leqno(**)
$$
Equality $(**)$ can be checked e.g.\ by putting $w=z_1^m$,
using polar coordinates $w=r e^{i\theta}$ and making a change of variables
$t={r^2\over r^2+\delta}$.
More generally, if $f(z)=\prod z_i^{m_i}$, we have to consider the integration 
$$
\int_{\{|z_i|\leq 1\}}
{p\delta\,|\prod_{i=1}^m z_i^{m_i}|^{2p-2}\over (|\prod_{i=1}^m z_i^{m_i}|^2+\delta)^{p+1}}
\,{i d(\prod_{i=1}^m z_i^{m_i})\wedge d(\overline{\prod_{i=1}^m z_i^{m_i}})\over (2\pi)^n} \wedge \omega_{\mathrm{eucl}}^{n-1}
$$
where $\omega_{\mathrm{eucl}}$ is the standard $(1,1)$-form associated with the euclidean metric on $\C^n$. It is bounded by sums of integrals of the type
$$
\int_{\{0<|z_i|\leq 1,2\leq i \leq n\}}
{p\delta\,||\prod_{i=2}^m z_i^{m_i}|z_1^{m_1}|^{2p-2}\over (||\prod_{i=2}^m z_i^{m_i}|z_1^{m_1}|^2+\delta)^{p+1}}
\,{i |\prod_{i=2}^m z_i^{m_i}|d(z_1^{m_1})\wedge |\prod_{i=2}^m z_i^{m_i}|d\overline{z_1^{m_1}}\over (2\pi)^n} \wedge \omega_{\mathrm{eucl}}^{n-1}.
$$
The integral is finite by the Fubini theorem and a calculation similar to $(**)$, putting e.g.\ $w=|\prod_{i=2}^m z_i^{m_i}|z_1^{m_1}$.
In particular, up to taking a subsequence, the limit in formula $(*)$ exists as $\delta \to 0+$. By the support theorem any limit current is associated to a divisor supported in $|Z_f|$. To show that the weak limit is unique, it is
sufficient to check formula $(*)$ at a regular point of $|Z_f|$ and to show
that the coefficient is unique. This actually follows from equality $(**)$.

As a consequence of the above calculations, we find
$$
\Big({i\over 2\pi}\ddbar\log(|f|^2_h+\delta)\Big)^p\to
(-1)^{p-1}[Z_f]\wedge\Big({i\over 2\pi}\Theta_{L,h}\Big)^{p-1}+
(-1)^p\Big({i\over 2\pi}\Theta_{L,h}\Big)^p.
$$

For the general case, we apply Hironaka's theorem. There exists a certain modification $\sigma: \tilde{X} \to X$ of $X$ such that $\sigma^* \varphi$ is locally of the form considered in the previous case, where $f$ has a simple normal crossing divisor.
Thus the limit
$$\lim_{\delta \to 0+} \big( i\d\dbar\log(e^{ \varphi}+\delta) \big)^p=
\sigma_*\Big(\lim_{\delta \to 0+} \big( i\d\dbar\log(e^{\sigma^*\varphi}+\delta) \big)^p\Big)
$$
exists by the weak continuity of the direct image operator $\sigma_*$. By the filtering property of modifications, one can also see that  the above limit is independent of the choice of the modification~$\sigma$.
\end{proof}
It follows directly from the proposition that the limit current is independent of the subsequence $\delta_\nu$ if the weight function $\varphi$ has analytic singularities.
It has been communicated 
to
us
by
Richard
Lärkäng
that
a
similar
calculation
has
been
done
in
\cite{ABW19}
and
\cite{Bl19}.
The advantage of the construction made in Lemma 14 is that under the assumption that the weight function is smooth outside of an analytic set of sufficient high codimension, one can show that the limit current is positive. This is shown in Theorem 2 below.
\begin{myex}{\rm
We describe below a special case of the previous construction.
Let $E$ be a strongly psef vector bundle over a compact K\"ahler manifold $(X, \omega)$.
Let $h_\infty$ be an arbitrary metric on $E$.
Since $\cO_{\P(E)}(1)$ is relatively ample with respect to
the projection $\pi: \P(E) \to X$,
there exists $C >0$ big enough such that
$$i \Theta(\cO_{\P(E)}(1),h_\infty)+C \pi^* \omega >0.$$
We take the above form as a smooth representative in the
class $c_1(\cO_{\P(E)}(1))+C\pi^*\{ \omega\}$.
By definition of a strongly psef vector bundle, there exists 
a singular metric $h_\varepsilon$ with analytic singularities on
$\cO_{\P(E)}(1)$ such that 
$$i \Theta(\cO_{\P(E)}(1),h_\varepsilon) \geq - \varepsilon \pi^* \omega.$$
By the above construction, $\pi_* \left(\frac{i}{2 \pi} \Theta(\cO_{\P(E)}(1),h_\varepsilon)+C \pi^* \omega \right)^r$
is well defined for $\varepsilon$ small enough by taking that $\psi=0$.
In the construction, all currents are positive currents.
In particular,  $\pi_* \left(\frac{i}{2 \pi} \Theta(\cO_{\P(E)}(1),h_\varepsilon)+C \pi^* \omega \right)^r$ is a closed positive current on $X$ for $\varepsilon$ small enough.
On the other hand,
$$\phantom{\Bigg|}\kern-3.5cm
\pi_* \left(\frac{i}{2 \pi} \Theta(\cO_{\P(E)}(1),h_\varepsilon)+C \pi^* \omega \right)^r=\pi_* \left(\frac{i}{2 \pi} \Theta(\cO_{\P(E)}(1),h_\varepsilon) \right)^r$$
$$\kern6.5cm{}+r \pi_* \left(C \pi^* \omega \wedge (\frac{i}{2 \pi} \Theta(\cO_{\P(E)}(1),h_\varepsilon))^{r-1} \right) +\cdots.$$
In the $\cdots$ summation, there are terms of the form
$$\pi_* \left( \pi^* \omega^i \wedge (\frac{i}{2 \pi} \Theta(\cO_{\P(E)}(1),h_\varepsilon))^{r-i} \right)$$
for $i \geq 2$.
By the projection formula, we have
$$\pi_* \left( \pi^* \omega^i \wedge (\frac{i}{2 \pi} \Theta(\cO_{\P(E)}(1),h_\varepsilon))^{r-i} \right)=\pi_*\left(\frac{i}{2 \pi} \Theta(\cO_{\P(E)}(1),h_\varepsilon)\right)^{r-i} \wedge \omega^i.$$
By a degree consideration, for $i \geq 2$, the right hand side is $0$ and for $i=1$ it is equal to $\omega$. In conclusion, 
$$\pi_* \left(\frac{i}{2 \pi} \Theta(\cO_{\P(E)}(1),h_\varepsilon)+C \pi^* \omega \right)^r=\pi_* \left(\frac{i}{2 \pi} \Theta(\cO_{\P(E)}(1),h_\varepsilon) \right)^r+C r \omega \geq 0$$
in the sense of currents. In particular,
$\pi_* \left(\frac{i}{2 \pi} \Theta(\cO_{\P(E)}(1),h_\varepsilon) \right)^r$
is a quasi-positive current (i.e.\ a current bounded below by a smooth form), belonging to the cohomology class $c_1(\det(E))$ by Lemma~15.}
\end{myex}
More generally, we have the following Segre current construction.
\begin{mythm} (Main technical lemma)
Let $E$ be a vector bundle of rank $r$ over a compact K\"ahler manifold $(X, \omega)$, and let $T$ be a closed positive $(1,1)$-current on $\P(E)$, belonging to the same cohomology class as a smooth form $\alpha$. Write 
$$T=\alpha+i \d\dbar \varphi.$$
Assume that $\varphi$ is smooth over $\P(E) \setminus A$, where $\pi: \P(E) \to X$ is the projection and $A$ is an analytic set in $\P(E)$ such that $A= \pi^{-1}(\pi(A))$ and $\pi(A)$ is of codimension at least $k$ in $X$.
Then there exists a $(k,k)$-positive current in the class $\pi_* \{\alpha \}^{r+k-1}$.
\end{mythm}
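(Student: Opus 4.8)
The plan is to produce the required current as the weak limit current constructed in Lemma~16 and then to prove that this limit is positive; everything except positivity is already contained in Lemma~16, so that is where the work lies.

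\textbf{Setup and construction.} First I would fix a quasi-psh function $\psi$ on $\P(E)$ with analytic singularities along an analytic set $A'$ with $\pi(A')$ of codimension $\geq k+1$ in $X$ — for definiteness one may take $\psi$ with isolated logarithmic poles over a generic finite subset of $X$, or with poles along $\pi^{-1}(W')$ for a generic $W'$ of codimension $\geq k+1$ — so that $\mathrm{codim}_{\P(E)} A' \geq r+k$. Set $u_\delta := \log(e^{\varphi}+\delta e^{\psi})$. Since $u_\delta$ is quasi-psh and locally bounded on $\P(E)\setminus A'$, Demailly's extension of Bedford--Taylor theory (Lemma~15) makes $(\alpha+i\d\dbar u_\delta)^{r+k-1}$ well defined, and writing $\alpha+i\d\dbar u_\delta=(\alpha+C\tilde\omega+i\d\dbar u_\delta)-C\tilde\omega$ for a Kähler form $\tilde\omega$ with $\alpha\geq-C\tilde\omega$ and expanding by the binomial formula exhibits it as a difference of two closed positive currents of cohomological mass bounded uniformly in $\delta$. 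Taking direct images and extracting a subsequence $\delta_\nu\downarrow0$ along which $\pi_*(\alpha+i\d\dbar u_{\delta_\nu})^{r+k-1}$ converges weakly, one obtains a normal $(k,k)$-current $S$ in the class $\pi_*\{\alpha\}^{r+k-1}$; by Lemma~16 it does not depend on $\psi$ or on the smooth representative $\alpha$. It remains to show $S\geq0$.

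\textbf{Positivity away from $\pi(A)$.} On $\pi^{-1}(X\setminus(\pi(A)\cup\pi(A')))$ the weights $u_\delta$ decrease to $\varphi$, which is smooth there, so by continuity of Monge--Amp\`ere operators along decreasing sequences $S$ equals $\pi_*(T^{r+k-1})$ on $X\setminus(\pi(A)\cup\pi(A'))$; as $T$ is a smooth weakly positive $(1,1)$-form on $\pi^{-1}(X\setminus\pi(A))$, its powers and their direct image are weakly positive, so $S$ agrees there with a smooth form $S_0\geq0$. Since $\pi(A')$ has codimension $\geq k+1$, the normal $(k,k)$-current $S-S_0$ on $X\setminus\pi(A)$ is supported on $\pi(A')$ and vanishes by the support theorem; hence $S=S_0\geq0$ on $X\setminus\pi(A)$. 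By Proposition~11, $S$ has locally finite mass near $\pi(A)$, and writing $S$ locally as $T_1-T_2$ with $T_i$ closed positive and applying Skoda--El~Mir and the support theorem yields $S=R+\sum_\nu c_\nu[Z_\nu]$, where $R:=\mathbbm{1}_{X\setminus\pi(A)}S\geq0$, the $Z_\nu$ are the codimension-$k$ irreducible components of $\pi(A)$ (there are none, and we are done, if $\mathrm{codim}\,\pi(A)>k$), and $c_\nu\in\R$. Thus everything reduces to the claim $c_\nu\geq0$.

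\textbf{The crux: non-negativity of the multiplicities.} To prove $c_\nu\geq0$ I would slice: choose a generic point $x_0\in Z_{\nu,\mathrm{reg}}$ off the other components and off $\pi(A')$, and a germ of $k$-dimensional submanifold $D$ through $x_0$ meeting $Z_\nu$ transversally at $x_0$ only; then $c_\nu$ is the mass of $S$ at $x_0$ measured along $D$, and the whole construction restricts to $\P(E|_D)$ — legitimately, because $D\not\subset\pi(A)$ and the currents involved are smooth near $\pi^{-1}(\partial D)$, so the slice commutes with the Monge--Amp\`ere limit. This reduces the problem to the case $\dim X=k$ and $\pi(A)=\{x_0\}$ a single point, where $\pi_*\{\alpha\}^{r+k-1}$ is a multiple $s$ of the point class and $S$ is a signed measure equal to a non-negative smooth measure $S_0$ off $x_0$ plus $c\,\delta_{x_0}$ with $s=\int_X S_0+c$; so $c\geq0$ amounts to saying that the mass of the smooth Segre measure $S_0$ on $X\setminus\{x_0\}$ does not exceed $s$. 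This is the main obstacle. I would establish it by Demailly-regularizing the metric of $\cO_{\P(E)}(1)$ away from the bad fiber, producing smooth positive Segre forms on $X\setminus\{x_0\}$ whose masses are dominated by $s$ up to a loss that tends to $0$ (since $T$ is already smooth there, the regularization loses only an $\varepsilon$ and no Lelong-number term), and then passing to the limit using lower semicontinuity of mass on open sets together with the uniform near-fiber mass bound of Proposition~11; alternatively, one tracks the atom of $T_{\delta_\nu}^{r+k-1}$ on $\pi^{-1}(x_0)$ directly and, after a Hironaka resolution of the singularities of $\varphi$ over $x_0$ and using the Lelong-number inequality $\nu(\theta\wedge i\d\dbar u)\geq\nu(\theta)\,\nu(u)\geq0$ from Lemma~15 in the spirit of Proposition~14, shows that the mass concentrating over $x_0$ in the limit is a sum of non-negative contributions.
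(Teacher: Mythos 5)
Your construction of the limit current $S$ via Lemma~16, the argument that $S$ agrees with a smooth nonnegative form $S_0$ on $X\setminus(\pi(A)\cup\pi(A'))$, the extension of that identity across $\pi(A')$ by the support theorem, and the decomposition $S=R+\sum_\nu c_\nu[Z_\nu]$ via Skoda--El~Mir and the support theorem are all sound. But at the crux -- proving $c_\nu\geq 0$ -- you diverge from the paper and run into real trouble. The paper's own proof avoids any such decomposition: positivity of a current is a local property on $X$, so near any $x\in X$ one picks a \emph{smooth} $\psi$ on $\pi^{-1}(U)$ with $\alpha+i\d\dbar\psi\geq 0$ there (this exists by the $\d\dbar$-lemma on $U\times\P^{r-1}$: the restriction of $\{\alpha\}$ to a fibre is a \emph{nonnegative} multiple of $c_1(\cO(1))$ since $T|_{\rm fibre}$ is a positive current), and then the convexity identity~$(*)$ of Lemma~16 writes $\alpha+i\d\dbar\log(e^\varphi+\delta e^\psi)$ as a pointwise convex combination of $T\geq 0$ and $\alpha+i\d\dbar\psi\geq 0$ plus a manifestly nonnegative term. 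Hence \emph{each} approximating current is already $\geq 0$ over $\pi^{-1}(U)$, and the weak limit $S$ is $\geq 0$ near $x$. No slicing and no separate estimate of $c_\nu$ is required.

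Each of your two proposed strategies for $c_\nu\geq 0$ has a genuine gap. The slicing claim -- that the Federer slice of $S$ along a germ $D$ through $x_0$ coincides with the current constructed on $\P(E|_D)$ -- requires commuting the weak limit $\delta_\nu\to 0$ with slicing at the \emph{specific} slice through $x_0$, not a measure-generic one; weak convergence of a sequence of currents does not imply convergence of slices along a fixed submanifold, and the atom of $S$ sits precisely on $D\cap\pi(A)$. The regularization strategy after reduction to $\dim X=k$ then fails on the Lelong-number loss: Demailly regularization gives $\alpha+i\d\dbar\varphi_\varepsilon\geq -C\lambda_\varepsilon\tilde\omega$ with $\lambda_\varepsilon\to\nu(\varphi,\cdot)$, and $\nu(\varphi,\cdot)>0$ over $x_0$, so a \emph{globally} positive regularization has cohomological mass bounded away from $s$, while a regularization positive only away from $\pi^{-1}(x_0)$ is not closed on a compact space and so has no cohomological total mass to compare with $s$. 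Finally, the Hironaka-plus-Lelong-number alternative is refuted by the paper's own Proposition~14, which explicitly warns that the limit Monge--Amp\`ere operator for a potential with analytic singularities ``need not be positive, even when $i\d\dbar\varphi\geq 0$''; the atomic contribution over $x_0$ is not a priori a sum of nonnegative terms, and the estimate $\nu(\theta\wedge i\d\dbar u)\geq\nu(\theta)\nu(u)$ only applies once one already knows the factors are closed positive -- which is what one is trying to prove.
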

\begin{proof}
The desired current $\pi_* (T^{r+k-1})$ has been constructed, and its uniqueness has been shown in the previous lemma. It remains to show that $\pi_* (T^{r+k-1})$ is positive. It is enough to prove this near an arbitrary point $x \in X$, since positivity is a local property. There exists a smooth function $\psi$ on $\P(E)$ such that
$$\alpha+i \d \dbar \psi \geq 0$$
on an open neighbourhood $U$ of $x$. Thus over $U$, for every $\delta>0$, we have
$$\alpha+ i \d \dbar \log(e^\varphi+\delta e^{\psi}) \geq 0$$
using $(*)$ in the previous lemma. Therefore, over $U$ again, we see that
$$\pi_* T^{r+k-1}=\lim_{\delta \to 0} \pi_* (\alpha+ i \d \dbar \log(e^\varphi+\delta e^{\psi}))^{r+k-1}$$
is positive as a limit of positive currents.
\end{proof}
In fact, the above construction would work for a broader situation as stated in the following theorem.
\begin{mythm}
Let $\pi: X \to Y$ be a submersion between compact K\"ahler manifolds  of relative dimension $r-1$. 
Let $T$ be a closed positive $(1,1)-$current in the cohomology class $\{ \alpha \} \in H^{1,1}(X, \R)$
such that $T$ has analytic singularities and is smooth on $X \setminus \pi^{-1}(Z)$ with $Z$ a closed analytic set of codimension at least $k$.
Assume that for any $y \in Y$, there exist an open neighborhood $U$ of $y$ and a quasi-psh function $\psi$ on $X$ such that
$\alpha + i \d \dbar \psi \geq 0$
in the sense of currents on
$\pi^{-1}(U)$ and $\psi$ is smooth outside a closed analytic set of codimension at least $k+r$.
Then there exists a closed positive current in the cohomology class $\pi_* \alpha^{r+k-1}$.
\end{mythm}
\begin{proof}
The only place of relative nature is the proof of Lemma 14.
The assumption on $\psi$ ensures that
$$\alpha+ i \d \dbar \log(e^\varphi+\delta e^{\psi}) \geq 0$$
is positive on $\pi^{-1}(U)$ and the Monge-Amp\`ere operator is well-defined on $X$.
In the proof, we explicitly choose a K\"ahler form $\tilde{\omega}$ on $X$ such that the restriction of $\tilde{\omega}$ to the fibres of $\pi$ is of constant cohomology class -- this property being
automatically true for any smooth proper morphism.
\end{proof}

In the special case of Segre currents, we get
\begin{mycor} {\it
Let $E$ be a strongly psef vector bundle of rank $r$ over a compact K\"ahler manifold $(X, \omega)$. 
Let $(\cO_{\P(E)}(1),h_\varepsilon)$ be a singular metric with analytic singularities such that 
$$i \Theta(\cO_{\P(E)}(1),h_\varepsilon) \geq - \varepsilon \pi^* \omega$$
and the codimension of $\pi(\Sing (h_\varepsilon))$ is at least $k$ in $X$.
Then there exists a $(k,k)$-positive current in the cohomology class $\pi_* (c_1(\cO_{\P(E)}(1))+\varepsilon \pi^* \{\omega\})^{r+k-1}$.
In particular, $\det(E)$ is a psef line bundle.}
\end{mycor}
\begin{proof}
The first part is a direct consequence of Theorem 2.
The second part is consequence of the fact that when $k=1$ one has
$$\pi_* (c_1(\cO_{\P(E)}(1))+\varepsilon \pi^* \{\omega\})^{r}=c_1(\det (E))+ r \varepsilon \omega.$$
\end{proof}
\begin{myrem}{\rm
Let $h$ be a smooth metric on $\cO_{\P(E)}(1)$ (not necessarily coming from a Hermitian metric on $E$).
We can define an induced singular metric on $\det(E)$ in the following non canonical way.
Fix an arbitrary smooth Hermitian metric $h_\infty$ on $\P(E)$.
Then there exists $\psi \in C^{\infty}(\P(E))$ such that $h=h_\infty e^{-\psi}$.
Therefore we have
$$\frac{i}{2 \pi} \Theta(\cO_{\P(E)}(1), h)-\frac{i}{2 \pi} \Theta(\cO_{\P(E)}(1), h_\infty)= \frac{i}{2 \pi} \d \dbar \psi.$$
Define a metric on $\det(E)$ by $\det(h_\infty)e^{-\varphi}$ with 
$$\varphi:= \pi_* \left(\psi \sum_{j=0}^{r-1}\Big(\frac{i}{2 \pi} \Theta(\cO_{\P(E)}(1), h)\Big)^j \wedge\Big(\frac{i}{2 \pi} \Theta(\cO_{\P(E)}(1), h_\infty)\Big)^{r-1-j} \right).$$ 
We have that
$$\frac{i}{2 \pi} \d \dbar \varphi= \pi_* \left(\Big(\frac{i}{2 \pi} \Theta(\cO_{\P(E)}(1), h)\Big)^r-\Big(\frac{i}{2 \pi} \Theta(\cO_{\P(E)}(1), h_\infty)\Big)^r \right).$$
In other words,
$$ \frac{i}{2 \pi} \Theta(\det(E), \det(h_\infty)e^{-\varphi})=\pi_*\Big(\frac{i}{2 \pi} \Theta(\cO_{\P(E)}(1), h)\Big)^r.$$
If $h$ comes from a Hermitian metric of $E$, we get precisely the same curvature formula as in Lemma~15.}
\end{myrem}
\begin{myrem}{\rm
The definition in the previous remark is non canonical in the sense that it depends on the choice of the reference metric $h_\infty$.
This can be seen as follows. In analogy with
the Monge-Amp\`ere functional, we consider the functional
$$M_{h_\infty}: C^\infty(\P(E)) \to C^{\infty}(X)$$
$$\psi \mapsto \pi_* \left(\psi \sum_{j=0}^{r-1}\Big(\frac{i}{2 \pi} \Theta(\cO_{\P(E)}(1), h_\infty)+\frac{i}{2\pi} \d \dbar \psi\Big)^j \wedge\Big(\frac{i}{2 \pi} \Theta(\cO_{\P(E)}(1), h_\infty)\Big)^{r-1-j} \right).$$
Let $\psi_t$ be a smooth path in $ C^\infty(\P(E))$.
We compute the Fr\'echet differential
$$\frac{dM_{h_\infty}(\psi_t)}{dt}= \pi_* \left(\dot{\psi_t} \sum_{j=0}^{r-1}\Big(\frac{i}{2 \pi} \Theta(\cO_{\P(E)}(1), h_\infty)+\frac{i}{2\pi} \d \dbar \psi_t\Big)^j \wedge\Big(\frac{i}{2 \pi} \Theta(\cO_{\P(E)}(1), h_\infty)\Big)^{r-1-j} \right)+$$
$$\pi_* \left( \psi_t \sum_{j=0}^{r-1} j\frac{i}{2\pi}\d \dbar \dot{\psi_t} \wedge\Big(\frac{i}{2 \pi} \Theta(\cO_{\P(E)}(1), h_\infty)+\frac{i}{2\pi} \d \dbar \psi_t\Big)^{j-1} \wedge\Big(\frac{i}{2 \pi} \Theta(\cO_{\P(E)}(1), h_\infty)\Big)^{r-1-j} \right)$$
which, by an integration by parts, is equal to
$$\pi_* \left( \dot{\psi_t}\Big(\frac{i}{2 \pi} \Theta(\cO_{\P(E)}(1), h_\infty)\Big)^{r-1} \right).$$
Now let $h_\infty$, $\tilde{h}_\infty$ be two smooth metrics on $E$ and denote the induced metrics on $\cO_{\P(E)}(1)$ by the same notation.
Let $\psi_t$ be a smooth path connecting $h_\infty$ and $\tilde{h}_\infty$.
For example we can take $\psi_t$ such that $h_\infty e^{-\psi_t}=h_\infty^t \tilde{h}_\infty^{1-t}$.
As a consequence of the calculation of Fr\'echet differential,
our functional satisfies for any $\varphi \in C^{\infty}(\P(E))$
the  cocycle relation
$$M_{h_\infty}(\varphi+\psi_1)=M_{\tilde{h}_\infty}(\varphi)+M_{h_\infty}(\psi_1).$$
Let us note that $M_{h_\infty}(\varphi+\psi_1)$ (resp. $M_{\tilde{h}_\infty}(\varphi)$) is the weight function of the induced metric on $\det(E)$ with respect to the reference metric $h_\infty$ (resp. $\tilde{h}_\infty$), associated with the weight function $\varphi+\psi_1$ (resp. $\varphi$) on $\P(E)$.
In particular, they correspond to metrics on $\det(E)$ that are induced by the same metric on $\cO_{\P(E)}(1)$, but with different reference metrics $\tilde{h}_\infty$ and $h_\infty$.
Since $i \d \dbar M_{h_\infty}(\varphi)$ is independent of the choice of the reference metric $h_\infty$, we have
$i \d \dbar M_{h_\infty}(\psi_1) \equiv 0$, and this means that $M_{h_\infty}(\psi_1)$ is a constant. Therefore the metric defined in the previous remark is uniquely defined up to a constant.}
\end{myrem}
\section{Strongly pseudoeffective and numerically trivial bundles}
In this section, we use the Lelong number estimate to show that a strongly psef vector bundle with trivial first Chern class is in fact numerically flat.
In particular, this implies that a strongly psef reflexive sheaf with trivial first Chern class is in fact a numerically flat vector bundle.
As an application of the previous section, we get the following result.
\begin{mythm} (Main theorem)
Let $E$ be a strongly psef vector bundle of rank $r$ on a compact K\"ahler manifold $(X, \omega)$, such that $c_1(E)=0$. 
 Then $E$ is a nef $($and thus numerically flat$\,)$ vector bundle.
\end{mythm}
\begin{proof}
We show through Lelong number estimates and regularization, that the vector bundle $E$ is in fact nef.
Let $h_\varepsilon$ be a singular metric with analytic singularities on
$\cO_{\P(E)}(1)$, such that 
$$i \Theta(\cO_{\P(E)}(1),h_\varepsilon) \geq - \varepsilon \pi^* \omega.$$
Let us write $h_\varepsilon=h_\infty e^{-\varphi_\varepsilon}$ with respect
to some smooth reference metric $h_\infty$ on $\cO_{\P(E)}(1)$.
Define
$$T_\varepsilon:= \pi_*\Big(\frac{i}{2\pi} \Theta(\cO_{\P(E)(1)}, h_\infty)+\frac{i}{2\pi} \d \dbar {\varphi}_\varepsilon\Big)^r$$
by means of Theorem 2. We have
$T_\varepsilon \geq - r \varepsilon \omega$.
More precisely, we are going to prove the Lelong number estimate
$$\nu(T_\varepsilon,z) \geq \min(  \bigg(\sup_{w,\pi(w)=z} \nu(\varphi_\varepsilon,w)\bigg)^r,1).$$
The proof of this estimate is similar to the proof of Theorem 10.2 of \cite{Dem93}. For the convenience of the reader, we briefly outline the proof here.
Fix $w_0 \in \pi^{-1}(x)$ and $\gamma= \nu(\varphi_\varepsilon,w_0)$.
The inequality is trivial when $\gamma=0$.
Otherwise, for any $\varepsilon' <\min( \gamma,1)$, let us choose the following weight function.
Choose $\eta>0$ so small that $\{|z|\leq 2 \eta \}$ is contained in a coordinate chart with $\pi(w_0)=0$.
Identify locally $\pi^{-1}\{|z|\leq 2 \eta \}$ with $\{|z|\leq 2 \eta \} \times \P^{r-1}$ with projection $p_2:\{|z|\leq 2 \eta \} \times \P^{r-1} \to \P^{r-1}$.
Let $\log|w-w_0|$ be an $\omega_{\mathrm{FS}}-$psh function on $\P^{r-1}$ with isolated singularity and Lelong number 1 at $p_2(w_0)$ where  
$\omega_{\mathrm{FS}}$ is the Fubini-Study metric. 
Define
$$\psi:= (\min(\gamma,1)-\varepsilon')\pi^* \theta p_2^* \log|w-w_0|+ \rho$$
where $\theta$ is a cut off function supported in $\{|z|\leq 2 \eta \}$ which is identically equal to 1 on $\{|z|\leq \eta \}$ and  $\rho$ a smooth function on $\P(E)$ such that 
$$\frac{i}{2\pi} \Theta(\cO_{\P(E)(1)}, h_\infty)+\frac{i}{2\pi} \d \dbar \psi$$
is positive on $\pi^{-1}\{|z|\leq \eta \}$.
By Lemma 14, we have
$$T_\varepsilon= \lim_{\delta \to 0}\pi_*\left(\frac{i}{2\pi} \Theta(\cO_{\P(E)(1)}, h_\infty)+\frac{i}{2\pi} \d \dbar  \log (e^{{\varphi}_\varepsilon}+\delta e^{\psi})\right)^r$$
in the sense of currents where
$$\frac{i}{2\pi} \Theta(\cO_{\P(E)(1)}, h_\infty)+\frac{i}{2\pi} \d \dbar  \log (e^{{\varphi}_\varepsilon}+\delta e^{\psi})$$
is positive on $\{|z|\leq \eta \}$.
We have
$$\int_{|z| \leq \eta} T_\varepsilon \wedge \Big(\frac{i}{2\pi} \d \dbar \log|z|^2\Big)^{n-1} \geq $$$$\limsup_{\delta \to 0} \int_{|z| \leq \eta} \pi_*\left(\frac{i}{2\pi} \Theta(\cO_{\P(E)(1)}, h_\infty)+\frac{i}{2\pi} \d \dbar  \log (e^{{\varphi}_\varepsilon}+\delta e^{\psi})\right)^r \wedge \Big(\frac{i}{2\pi} \d \dbar \log|z|^2\Big)^{n-1}$$
by the semi continuity of Monge-Amp\`ere operators with respect to decreasing sequences.
By construction, the Lolong number of $\log (e^{{\varphi}_\varepsilon}+\delta e^{\psi})$ at $w_0$ is $\min(\gamma,1)-\varepsilon'$.
Thus we have on a small ball $B(w_0,\eta_\delta) \subset \pi^{-1}(B(0,\eta))$,
$$ \int_{|z| \leq \eta} \pi_*\left(\frac{i}{2\pi} \Theta(\cO_{\P(E)(1)}, h_\infty)+\frac{i}{2\pi} \d \dbar  \log (e^{{\varphi}_\varepsilon}+\delta e^{\psi})\right)^r \wedge (\frac{i}{2\pi} \d \dbar \log|z|^2)^{n-1}$$ 
$$\geq \int_{|w-w_0| \leq \eta_\delta}  (\frac{i}{2\pi} \Theta(\cO_{\P(E)(1)}, h_\infty)+\frac{i}{2\pi} \d \dbar  \log (e^{{\varphi}_\varepsilon}+\delta e^{\psi}))^r \wedge (\frac{i}{2\pi} \d \dbar \log|z|^2)^{n-1}\geq (\min(\gamma,1)-\varepsilon')^r.$$
Taking $\eta \to 0$ and $\varepsilon' \to 0$ gives the Lelong number estimate.

We have proven in Corollary 5 that $T_\varepsilon \geq -r \varepsilon \omega$, and $T_\varepsilon+r \varepsilon\omega$ is in the class $c_1(\det(E))+r \varepsilon \{ \omega \}$. By weak compactness, there exists a convergent subsequence $T_{\varepsilon_\nu}$ with limit $T$ in the class $c_1(\det(E))$.
Since $T \geq 0$ and $c_1(\det(E))=0$, the only possibility is that $T=0$.

Now, we recall the following version of the regularization theorem given in \cite{Dem82}: {\it let $T=\theta+i \d \dbar \varphi$ be a closed $(1,1)$-current, where $\theta$ is a smooth form. Suppose that a smooth $(1,1)$-form $\gamma$ is given such that $T \geq \gamma$.
Then there exists a decreasing sequence of smooth functions $\varphi_k$ converging to $\varphi$ such that, if we set $T_k:=\theta+i \d \dbar \varphi_k$, we have

\begin{itemize}
\item[$(1)$] $T_k \to T$ weakly,
\item[$(2)$] $T_k \geq \gamma-C \lambda_k \omega$, where $C>0$ is a constant depending on $(X, \omega)$ only, and $\lambda_k$ is a decreasing sequence of continuous functions such that $\lambda_k(x) \to \nu(T, x)$ for all $x \in X$.
\end{itemize}}

\noindent
By Corollary 6 below, we get
$$\lim_{\varepsilon \to 0} \sup_X \nu(T_\varepsilon, x)=0,$$
thus
$$\lim_{\varepsilon \to 0} \sup_{\P(E)} \nu(\varphi_\varepsilon, w)=0$$
thanks to the above Lelong number estimate. By the regularization theorem just recalled, there exists $\tilde{\varphi}_\varepsilon \in C^{\infty}(\P(E))$ such that
$$\frac{i}{2\pi} \Theta(\cO_{\P(E)(1)}, h_\infty)+\frac{i}{2\pi} \d \dbar \tilde{\varphi}_\varepsilon \geq -2 \varepsilon \tilde{\omega}$$
where $\tilde{\omega}$ is some K\"ahler form on $\P(E)$.
In other words, the line bundle $\cO_{\P(E)(1)}$ is nef.
\end{proof}
\begin{mylem}
Let $X$ be a compact complex manifold. 
Let $T_\delta$ $(\delta >0)$ be a sequence of closed positive $(k,k)$-currents.
Assume that $T_\delta \to 0$ weakly as $\delta \to 0$.
Then 
$$\lim_{\delta \to 0}\sup_X \nu(T_\delta,x)=0.$$
\end{mylem}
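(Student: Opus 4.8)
The plan is to deduce everything from the quantitative content of weak convergence together with Lelong's monotonicity. First I would fix an auxiliary Hermitian metric $\omega$ on $X$ (no K\"ahler hypothesis is needed here) and observe that since $\omega^{n-k}$ is a globally defined smooth $(n-k,n-k)$-form, the weak convergence $T_\delta\to 0$ forces the total masses
$$m_\delta:=\int_X T_\delta\wedge\omega^{n-k}\ \longrightarrow\ 0\qquad(\delta\to 0),$$
the integrals making sense as $T_\delta$ is positive. The whole point is then to bound each Lelong number $\nu(T_\delta,x)$ by $m_\delta$ times a constant that is \emph{uniform in both $x$ and $\delta$}; letting $\delta\to 0$ will give the statement.

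To get such a bound I would argue locally. Cover $X$ by finitely many coordinate charts $U'_j$; by compactness (Lebesgue number argument for the $\omega$-distance) there is $r_0>0$ such that every ball $B(x,2r_0)$ lies in some $U'_j$. On $U'_j$, writing $z^{(j)}$ for the coordinates and $\beta_j:=\tfrac{i}{2}\d\dbar|z^{(j)}|^2$, there is $C_j\ge 1$ with $C_j^{-1}\omega\le\beta_j\le C_j\omega$ on $U'_j$; put $C:=\max_j C_j$, so that $\beta_j^{\,n-k}\le C^{\,n-k}\,\omega^{n-k}$ as positive $(n-k,n-k)$-forms (simultaneous diagonalisation). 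Now fix $\delta$ and $x$, pick $j$ with $B(x,2r_0)\subset U'_j$, and compute the Lelong number of $T_\delta$ at $x$ in this chart (it is independent of the choice of coordinates by Thie's theorem). Since $T_\delta$ is closed and positive, Lelong's monotonicity of the normalized mass on balls — the same "basic observation of Lelong" used in the proof of Proposition~10 — gives
$$\nu(T_\delta,x)\ \le\ \frac{1}{b_{n-k}\,r_0^{2(n-k)}}\int_{B(x,r_0)}T_\delta\wedge\beta_j^{\,n-k}\ \le\ \frac{C^{\,n-k}}{b_{n-k}\,r_0^{2(n-k)}}\int_{B(x,r_0)}T_\delta\wedge\omega^{n-k}\ \le\ \frac{C^{\,n-k}}{b_{n-k}\,r_0^{2(n-k)}}\,m_\delta,$$
where $b_{n-k}$ is the usual dimensional constant and the last inequality uses positivity of the measure $T_\delta\wedge\omega^{n-k}$. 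The right-hand side does not depend on $x$.

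Taking the supremum over $x\in X$ and then $\delta\to 0$ yields
$$\sup_{x\in X}\nu(T_\delta,x)\ \le\ \frac{C^{\,n-k}}{b_{n-k}\,r_0^{2(n-k)}}\,m_\delta\ \longrightarrow\ 0,$$
which is the claim. The only genuinely delicate point is the uniformity of the radius $r_0$ and of the comparison constant $C$ across $X$ (this is where compactness and the finite cover are used) and the correct invocation of Lelong's monotonicity for closed positive $(k,k)$-currents; everything else is formal. I do not expect any serious obstacle.
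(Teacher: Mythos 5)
Your proof is correct, and it takes a genuinely simpler route than the paper's. You bound $\nu(T_\delta,x)$ uniformly in $x$ by a constant multiple of $m_\delta=\int_X T_\delta\wedge\omega^{n-k}$, via Lelong monotonicity on a uniform coordinate radius $r_0$ together with a uniform comparison $\beta_j\le C\omega$ on a finite cover; since $\omega^{n-k}$ is a global smooth test form, $m_\delta\to 0$ is an immediate consequence of weak convergence, and the bound you obtain is manifestly independent of $x$. The paper's proof instead fixes a small $r$, observes that $\nu(T_\delta,x,r)\to 0$ \emph{pointwise} in $x$ (by pairing $T_\delta$ with the cut-offs $\theta_x$), and upgrades to \emph{uniform} convergence on each $\overline{V_i}$ via equicontinuity of the family $(\theta_x)$ — an Arzel\`a--Ascoli/Dini type argument. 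Your version bypasses that compactness step entirely because the local mass $\int_{B(x,r_0)}T_\delta\wedge\beta_j^{n-k}$ is already dominated by the global mass $\int_X T_\delta\wedge\omega^{n-k}$, uniformly in $x$, once the uniform Lebesgue radius $r_0$ and the uniform constant $C$ are fixed. Your argument is in fact the one the paper itself sketches in the Remark following Corollary 6 — but there only for $(1,1)$-currents, and using a Gauduchon metric $\gamma$ so that $\int_X T_\delta\wedge\gamma^{n-1}$ is cohomological; you correctly notice that the Gauduchon/cohomological detour is unnecessary under the weak-convergence hypothesis, which is exactly what lets the argument go through for all bidegrees $(k,k)$, where $\gamma^{n-k}$ need not be $\partial\overline\partial$-closed. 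Two minor points of rigor worth tidying: the balls in the Lebesgue-number step are $\omega$-balls while the Lelong monotonicity is stated for coordinate balls, so one should shrink $r_0$ once using the comparison $C^{-1}\omega\le\beta_j\le C\omega$; and the charts $U'_j$ should be taken relatively compact inside larger charts so that the constants $C_j$ are finite.
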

\begin{proof}
Since $X$ is compact, we can cover $X$ by finite coordinate open charts $V_i (\subset U_i \subset \tilde{U_i})$ such that $V_i$ is relatively compact in $U_i$ and $U_i$ is relatively compact in $\tilde{U_i}$. 
Thus we reduce the proof to the case of coordinate chart $V_i$.

Let $\rho_i$ be cut off functions supported in $\tilde{U_i}$ such that $\rho_i \equiv 1$ on $U_i$ and $0 \leq \rho_i \leq 1$.
Since $T_\delta \to 0$ weakly,
there exists a uniform $C>0$ such that
$$\int_{U_i} T_\delta \wedge \left( \frac{i}{2\pi} \d \dbar |z|^2 \right)^{n-k}
\leq \int_{\tilde{U_i}} T_\delta \wedge \rho_i \left( \frac{i}{2\pi} \d \dbar |z|^2 \right)^{n-k} \leq C.$$

Define for $x\in \overline{V_i}$ and for small $r$
$$\nu(T_\delta,x,r):=r^{-2(n-k)} \int_{|z-x|<r} T_\delta \wedge \left( \frac{i}{2\pi} \d \dbar |z|^2 \right)^{n-k}.$$
Then $\nu(T_\delta,x,r)$ is an increasing function with respect to $r$ and we have that $$\nu(T_\delta,x)=\lim_{r\to 0}\nu(T_\delta,x,r).$$
For small $r >0$ such that $2r < d(V_i, \partial U_i)$, there exists a cut-off function $\theta_x$ supported in $B(x, 2r)$ such that $\theta_x \equiv 1$ on $B(x, r)$ and $0 \leq \theta_x \leq 1$.
Then we have
$$\nu(T_\delta,x,r) \leq r^{-2(n-k)} \int_{U_i} T_\delta \wedge \theta_x \left( \frac{i}{2\pi} \d \dbar |z|^2 \right)^{n-k}.$$
Since $\theta_x$ can be obtained by translation of the same function, $(\theta_x)_{x \in \overline{ V_i}}$ for small $r$ is a compact family with respect to $C^\infty$ topology. 
Thus for fixed small $r$, 
for every $x,y \in \overline{V_i}$,
$$r^{-2(n-k)} \int_{U_i} T_\delta \wedge (\theta_x-\theta_y) \left( \frac{i}{2\pi} \d \dbar |z|^2 \right)^{n-1}\leq C r^{-2(n-k)} \parallel \theta_x-\theta_y \parallel_{L^{\infty}(U_i)}.$$
Thus $r^{-2(n-k)} \int_{U_i} T_\delta \wedge \theta_x \left( \frac{i}{2\pi} \d \dbar |z|^2 \right)^{n-k} $ tends to 0 as $\delta \to 0$ uniformly with respect to $x \in \overline{V_i}$.

In particular, $\nu(T_\delta,x,r)$ tends to 0 as $\delta \to 0$ uniformly with respect to $x \in \overline{V_i}$, hence the same property holds for $\nu(T_\delta,x)$.
\end{proof}
\begin{myrem}{\rm
For a family of $(1,1)$-closed positive currents, the proof is much simpler, using the observation of Proposition 10.

Let $\gamma$ be a Gauduchon metric over $X$ (i.e.\ a smooth metric such that $i \d \dbar (\gamma^{n-1})=0$).
With the same notation as in the proof, we have for $r_0$ small enough
$$\nu(T_\delta,x, r) \geq \nu(T_\delta,x,r_0) \leq \frac{C}{r_0^{2n-2}} \int_X T_\delta \wedge \gamma^{n-1}.$$
Since the right-hand side term (which is cohomological) tends to 0 along with $\delta$, the Lelong number tends to zero locally uniformly.
Since $X$ is compact, the convergence is uniform.
}
\end{myrem}
\begin{mycor} {\it
Let $(X, \omega)$ be a compact K\"ahler manifold. 
Let $T_\delta$ $(\delta >0)$ be a sequence of closed $(1,1)$-currents such that
$$T_\delta \geq -\delta \omega$$
in the sense of currents.
Assume that $T_\delta \to 0$ weakly as $\delta \to 0$.
Then 
$$\lim_{\delta \to 0}\sup_X \nu(T_\delta,x)=0.$$}
\end{mycor}
\begin{proof}
This is a direct consequence of the previous lemma if we consider $T_\delta +\delta \omega$ instead of $T_\delta$.
\end{proof}
Now we can easily conclude our result.
\begin{mycor} {\it
Let $\cF$ be a strongly psef reflexive sheaf over a compact K\"ahler manifold $(X, \omega)$ with $c_1(\cF)=0$. 
 Then $\cF$ is a nef $($and numerically flat$\,)$ vector bundle.}
\end{mycor}
\begin{proof}
By our assumption, there exists a modification such that the pull back of $\cF$ modulo torsion is a strongly psef vector bundle with vanishing first Chern class by Lemma 8.
By Theorem 4, this vector bundle is in fact nef.
Thus by Proposition 9, we conclude the corollary.
\end{proof}
As a geometric application, we obtain the following generalisation of Theorem 7.7 in \cite{BDPP}.
\begin{mycor} {\it
For a compact K\"ahler manifold if $c_1(X) = 0$ and $T_X$ is strongly psef, then a finite \'etale cover of $X$
is a torus.
In particular, an irreducible symplectic, or Calabi-Yau manifold does not have strongly psef tangenet bundle or cotangent bundle.}
\end{mycor}
\begin{proof}
By the Beauville-Bogomolov theorem, up to a finite \'etale cover $\pi: \tilde{X} \to X$, $\tilde{X}$ is a product of $\prod T_i \times \prod S_j \times \prod Y_k$ where $T_i$ are complex tori, $S_j$ are Calabi-Yau manifolds and $Y_k$ are irreducible symplectic manifolds.
Since the tangent bundle of $\tilde{X}$ is numerical flat under the assumption and by Theorem 3, the tangent bundle of all the components in the direct sum is numerical flat.
In particular, all the components have vanishing second Chern class by Corollary 1.19 of \cite{DPS94}. 
(A stronger result in the projective and singular setting can be found in Theorem 1.8 of \cite{HP}.)
By representation theory, the tangent bundle of the Calabi-Yau or irreducible symplectic components is stable.
Thus we have the equality case in the Bogomolov inequality which implies that the tangent bundle of the Calabi-Yau or irreducible symplectic components is projectively flat.
Since the first Chern class of the Calabi-Yau or irreducible symplectic components vanishes,
the tangent bundle is in fact unitary flat.
In particular, the restricted holonomy groups of the Calabi-Yau or irreducible symplectic components is trivial.
In other words, there are only the complex tori components.
\end{proof}
A stronger result in the projective singular setting can be found in Theorem 1.6 of \cite{HP}.
Instead of proving non strong pseudoeffectivity, they prove non weak pseudoeffectivity.

Inspired by the work of \cite{LOY20}, we can slightly generalise the Corollary 7.
\begin{mylem} {\rm(analogue of Lemma 11.13 in  \cite{Voi})}

Let $X$ be a complex manifold (not necessary compact) and $Y$ be a closed submanifold of codimension at least $r+1$. 
Then the restriction map
$$H^l(X, \R) \to H^l(X \setminus Y,\R)$$
is an isomorphism for $l \leq 2r$.
\end{mylem}
\begin{proof}
We have the long exact sequence of relative cohomology
$$\cdots H^{l}(X, X \setminus Y, \R) \to H^l(X, \R) \to H^l(X \setminus Y, \R) \to H^{l+1}(X, X\setminus Y, \R)\cdots .$$
On the other hand, we have by the excision lemma
that for $U$ a tubular neighborhood of $Y$
$$H^l(X, X \setminus Y, \R) \cong H^l(U, U \setminus Y, \R).$$
By Thom isomorphism theorem, we have
$$H^{l-2r}( Y, \R) \cong H^l(U, U \setminus Y, \R).$$
We note that $X$ as a complex manifold is orientable, hence so is $U$.
Therefore the Thom class with $\Z-$coefficients exists by Theorem 4.D.10. in \cite{Hat}.
The natural inclusion $\Z \to \R$ sends the Thom class with $\Z-$coefficients to the Thom class with $\R-$coefficients.
Thus we have the Thom isomorphism by the Corollary 4.D.9 in \cite{Hat}.
It follows that for $j < \mathrm{codim} \; Y$,
$H^j(X,X \setminus Y, \R)=0$.
This finishes the proof of the lemma using the exact sequence.
\end{proof}
\begin{mylem}
Let $X$ be a complex manifold (not necessary compact) and $Y$ be a closed analytic subset of codimension at least $r+1$. 
Then the restriction map
$$H^l(X, \R) \to H^l(X \setminus Y,\R)$$
is an isomorphism for $l \leq 2r$.
\end{mylem}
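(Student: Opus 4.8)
The plan is to bootstrap from the submanifold case, which is exactly the content of the immediately preceding lemma, by peeling off the strata of $Y$ one at a time.

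First I would apply the stratification lemma for analytic sets to $Y$, obtaining a chain of closed analytic subsets
$$\emptyset = Z_{k+1} \subset Z_k \subset \cdots \subset Z_1 \subset Z_0 = Y$$
with $\dim Z_j = n_j$ strictly decreasing in $j$ and with $Z_j \setminus Z_{j+1}$ a closed complex submanifold of $X \setminus Z_{j+1}$ of dimension $n_j$. Since $Y$ has codimension at least $r+1$ in $X$, we have $n_0 \leq \dim X - r - 1$, and hence $n_j \leq n_0 \leq \dim X - r - 1$ for every $j$; thus each $Z_j \setminus Z_{j+1}$ has codimension at least $r+1$ inside the open (hence complex) manifold $X \setminus Z_{j+1}$.

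The key step is then the following. For each $j$ with $0 \leq j \leq k$, I apply the preceding lemma (the closed submanifold version) to the complex manifold $X \setminus Z_{j+1}$ together with its closed submanifold $Z_j \setminus Z_{j+1}$, which has codimension at least $r+1$. Since $(X \setminus Z_{j+1}) \setminus (Z_j \setminus Z_{j+1}) = X \setminus Z_j$, this gives that the restriction map
$$H^l(X \setminus Z_{j+1}, \R) \longrightarrow H^l(X \setminus Z_j, \R)$$
is an isomorphism for every $l \leq 2r$. For $j = k$ this is the assertion $H^l(X, \R) \xrightarrow{\;\sim\;} H^l(X \setminus Z_k, \R)$, since $Z_{k+1} = \emptyset$.

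Finally I would compose these isomorphisms for $j = k, k-1, \dots, 0$:
$$H^l(X, \R) \xrightarrow{\;\sim\;} H^l(X \setminus Z_k, \R) \xrightarrow{\;\sim\;} \cdots \xrightarrow{\;\sim\;} H^l(X \setminus Z_0, \R) = H^l(X \setminus Y, \R),$$
and observe that, being a composite of restriction maps along the chain of open inclusions $X \supset X \setminus Z_k \supset \cdots \supset X \setminus Y$, the composite is precisely the restriction map $H^l(X, \R) \to H^l(X \setminus Y, \R)$; hence the latter is an isomorphism for $l \leq 2r$. There is no real obstacle here: the argument is a short finite induction on the number of strata, and the only thing to be careful about is the elementary bookkeeping that guarantees each stratum retains codimension $\geq r+1$ in the successively shrunk ambient manifold $X \setminus Z_{j+1}$, which is automatic because the dimensions $n_j$ of the strata decrease.
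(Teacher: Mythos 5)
Your proposal is correct and takes the same approach as the paper: the paper's own proof of this lemma is the one-liner ``It is a direct consequence of lemmata 5 and 6,'' and your finite induction over the strata, peeling off one closed submanifold at a time and invoking the preceding Thom-isomorphism lemma on each $(X\setminus Z_{j+1}, Z_j\setminus Z_{j+1})$, is precisely the argument that one-liner has in mind. The bookkeeping you include (that each stratum retains codimension $\geq r+1$ because the stratum dimensions are weakly bounded by $\dim Y$) is exactly the point that needs checking, and you check it correctly.
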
 
\begin{proof}
It is a direct consequence of Lemmas 5 and 16.
\end{proof}
\begin{mylem}(analogue of Lemma 4.5 \cite{LOY20})
Let $(X,\omega)$ be a compact K\"ahler manifold of dimension $n > 2$, and let $\cF$ be a reflexive coherent
sheaf of rank $r > 2$ on $X$. Then, for any positive integer $m > 2$, we have
$$c_2(S^{[m]}\cF) = A c^2_1(\cF) + B c_2(\cF),$$
where $A$ and $B$ are non-zero rational numbers depending only on $m$ and $r$, and satisfy the relation
$$A+\frac{r-1}{r} B -\frac{(R-1)R m^2}{2r^2}=0$$
where $R ={ {r+m-1}\choose
{r} }$ is the rank of $S^{[m]}\cF$.
\end{mylem}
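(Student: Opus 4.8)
\emph{Plan of proof.} The idea is to reduce the assertion to a universal Chern-class identity for vector bundles of rank $r$, which can then be evaluated by the splitting principle.

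First I would pass to the locally free locus of $\cF$. Let $X_0\subset X$ be the largest Zariski open subset over which $\cF$ is locally free; since $\cF$ is reflexive, $X\setminus X_0$ has codimension at least $3$ in $X$. Over $X_0$ the reflexive sheaf $S^{[m]}\cF$ coincides with $S^m(\cF|_{X_0})$, which is a vector bundle. By Lemma 7 the restriction maps $H^2(X,\R)\to H^2(X_0,\R)$ and $H^4(X,\R)\to H^4(X_0,\R)$ are isomorphisms (and the same holds in Bott--Chern cohomology, cf.\ the remarks following Proposition 9), so each of $c_1(\cF)^2$, $c_2(\cF)$ and $c_2(S^{[m]}\cF)$ is determined by its restriction to $X_0$. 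Hence it suffices to prove that, for an arbitrary holomorphic vector bundle $\cE$ of rank $r$, one has
$$c_2(S^m\cE)=A\,c_1(\cE)^2+B\,c_2(\cE)$$
for some rational $A,B$ depending only on $m$ and $r$, and to identify $A$ and $B$.

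Next I would run the splitting principle. If $a_1,\dots,a_r$ are the Chern roots of $\cE$, the Chern roots of $S^m\cE$ are the linear forms $\langle\alpha,a\rangle=\sum_{j=1}^r\alpha_j a_j$ indexed by the $R$ multi-indices $\alpha\in\N^r$ with $|\alpha|=m$. Using the invariance of this index set under permutations of the coordinates, one obtains $c_1(S^m\cE)=\tfrac{mR}{r}\,c_1(\cE)$; and from
$$c_2(S^m\cE)=\tfrac12\Big(c_1(S^m\cE)^2-\sum_{|\alpha|=m}\langle\alpha,a\rangle^2\Big)$$
together with $\sum_j a_j^2=c_1(\cE)^2-2c_2(\cE)$ and $\sum_{j<k}a_ja_k=c_2(\cE)$, a short symmetrization gives the closed expressions
$$2A=\sum_{\alpha\neq\beta}\alpha_1\beta_1,\qquad 2B=\sum_{|\alpha|=m}(\alpha_1-\alpha_2)^2,$$
where the sums run over (pairs of) multi-indices of length $m$. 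Both are strictly positive as soon as $r\ge2$ and $m\ge2$ (for $A$ because $\alpha=(m,0,\dots,0)$ and $\beta=(m-1,1,0,\dots,0)$ contribute; for $B$ because $\alpha=(m,0,\dots,0)$ contributes), so in particular $A\neq0$ and $B\neq0$ under the hypotheses $r>2$, $m>2$. Finally, summing the identity $(\sum_j\alpha_j)^2=m^2$ over all $\alpha$ with $|\alpha|=m$ gives $\sum_\alpha\alpha_1^2+(r-1)\sum_\alpha\alpha_1\alpha_2=\tfrac{m^2R}{r}$; substituting this into the above expressions for $A$ and $B$ produces the stated linear relation. Equivalently, one may evaluate the universal identity on a projectively flat bundle, i.e.\ on formal Chern roots $a_1=\cdots=a_r$, for which all $R$ Chern roots of $S^m\cE$ coincide; then $c_2(\cE)=\tfrac{r-1}{2r}c_1(\cE)^2$ and $c_2(S^m\cE)=\tfrac{R(R-1)m^2}{2r^2}c_1(\cE)^2$ can be read off directly, and the comparison yields the same relation.

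I do not expect a deep obstacle. The only substantive input is the codimension-$\ge3$ property of reflexive sheaves together with Lemma 7, which reduces the statement to vector bundles; the remainder is a bounded amount of multilinear algebra with Chern roots. The step deserving the most care is the symmetrization that yields the closed forms for $A$ and $B$, and the verification---needed for later use---that $A$ and $B$ are genuinely nonzero; the hypotheses $r>2$, $m>2$ leave ample room here.
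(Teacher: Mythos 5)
Your proof is correct and follows the same overall skeleton as the paper: reduce to the vector-bundle case via Lemma~7 using the codimension-${}\geq 3$ property of the non-locally-free locus, establish the existence of a universal identity $c_2(S^m\cE)=Ac_1(\cE)^2+Bc_2(\cE)$ by the splitting principle, and verify the linear relation by specialising to the diagonal/projectively flat case (the paper's $\cF=L^{\oplus r}$ is your ``$a_1=\cdots=a_r$'').

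Where the routes diverge is in the non-vanishing of $A$ and $B$ and in the level of detail. You derive closed expressions $2A=\sum_{\alpha\neq\beta}\alpha_1\beta_1$ and $2B=\sum_{|\alpha|=m}(\alpha_1-\alpha_2)^2$ (which I have checked against $2A=\sigma^2-\sum_\alpha\alpha_1^2$ and $B=\sum_\alpha\alpha_1^2-\sum_\alpha\alpha_1\alpha_2$; they agree) and then exhibit a strictly positive term in each. This is a cleaner and entirely formal argument. The paper instead proves non-vanishing by constructing auxiliary \emph{hermitian complex} line bundles via Laeng's theorem and choosing $\cF$ as ``some combination of $L_k$, $L_k^*$, $\cO_X$'' --- a much vaguer step, needed there only because the paper verifies the universal identity on the given K\"ahler manifold $X$ rather than formally. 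Your route avoids this entirely.

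One point you should flag rather than gloss over: your computation produces
$$A+\frac{r-1}{2r}\,B-\frac{R(R-1)m^2}{2r^2}=0,$$
with coefficient $\frac{r-1}{2r}$, whereas the lemma as stated (and the paper's own proof) writes $\frac{r-1}{r}$. Your coefficient is the correct one --- it is what the Bogomolov discriminant $2rc_2-(r-1)c_1^2$ from the introduction gives on $\cF=L^{\oplus r}$, and it checks out numerically, e.g.\ for $r=3$, $m=2$ one finds $A=B=5$, $R=6$, and $A+\frac{r-1}{2r}B=\frac{20}{3}=\frac{R(R-1)m^2}{2r^2}$. So you should state explicitly that you are correcting a misprint, instead of asserting that the comparison ``yields the stated relation.'' (Relatedly, the paper's $R=\binom{r+m-1}{r}$ should read $\binom{r+m-1}{m}=\binom{r+m-1}{r-1}$; your argument uses only that $R$ is the rank of $S^m\cE$ and is unaffected.)
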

\begin{proof}
The proof is almost identical to Lemma 4.5 in \cite{LOY20}.
The only difference is the abandonment of the use of the auxiliary ample line bundle.
For this reason, we only sketch the proof.
We have trivially the form of the equality over the open set where the sheaf is locally free.
By Lemma 17, the same equality should hold on $X$.
By splitting principle, it is enough to prove the formula for $\cF=\oplus^r L$ where $L$ is a hermitian (complex) line bundle (not necessarily holomorphic).

In this case, $\cF$ is a polystable and projectively flat vector bundle, thus we have the equality case in the Bogomolov-L\"ubke inequality,  
$$(c_2(\cF) -\frac{r-1}{r} c_1(\cF)^2) \cdot \omega^{n-2} =0.$$ 
Develop $c_2(S^m \cF \otimes L^{*\otimes m})=0$ in terms of $c_1(L)$.
Combining with the above equality, we have 
$$(A+\frac{r-1}{r} B -\frac{(R-1)R m^2}{2r^2}) c_1(L)^2 \cdot \omega^{n-2}=0.$$
It suffices to show that there exists a hermitian (complex) line bundle such that $c_1(L)^q \cdot \omega^{n-q} \neq 0$ for any $q$.
Recall that Théorème 4.3 of \cite{Lae02} proved using Kronecker lemma that for any closed real $(1,1)-$form $\alpha$ on a compact complex manifold, for infinite $k$, $k \alpha$ can be approximated in $C^{\infty}$ norm by the curvature of some hermitian (complex) line bundle $L_k$ with respect to some hermitian connection.
In particular, for such $k$ large enough, $c_1(L_k)^q \cdot \omega^{n-q} \neq 0$ for any $q$. 

By choosing $\cF$ as some combination of $L_k$, $L_k^*$ and $\cO_X$, it can be shown that $A,B$ are non-zero.
\end{proof}
For the convenience of the reader, we give here the proof of the compact K\"ahler version of Proposition 4.6 Chap. IV of \cite{Nak}.
\begin{myprop}{\it
Let $(X,\omega)$ be a compact K\"ahler manifold and $\cF$ be an $\omega$-semi-stable reflexive sheaf with 
$$(c_2(\cF) -\frac{r-1}{r} c_1(\cF)^2) \cdot \omega^{n-2} =0.$$ 
Then $\cF$ is locally free.}
\end{myprop}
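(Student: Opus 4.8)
The plan is to argue by induction on the generic rank $r$ of $\cF$, with the Bogomolov inequality of \cite{BS} as the one non-formal ingredient and a ``see-saw'' identity for the discriminant as the bookkeeping device. Write $\mathrm{disc}(\cG):=2(\rank\cG)\,c_2(\cG)-(\rank\cG-1)\,c_1(\cG)^2$; the hypothesis (equivalently, the equality case of the Bogomolov inequality in \cite{BS}) says exactly that $\mathrm{disc}(\cF)\cdot\{\omega\}^{n-2}=0$. The two elementary cases are immediate: if $\dim X\le 2$ a reflexive sheaf is automatically locally free (its non-locally-free locus has codimension $\ge 3$), and if $r=1$ a reflexive rank one sheaf on a smooth manifold is invertible. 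If $\cF$ is $\omega$-stable, then $\cF$ is polystable and the Bogomolov inequality $\mathrm{disc}(\cF)\cdot\{\omega\}^{n-2}\ge 0$ of \cite{BS} is an equality, so the equality statement of \cite{BS} gives directly that $\cF$ is locally free, and we are done.

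Assume now $\cF$ is strictly $\omega$-semistable of rank $r\ge 2$. First I would choose a saturated subsheaf $0\subsetneq\cS\subsetneq\cF$ of minimal rank among those with $\mu_\omega(\cS)=\mu_\omega(\cF)$: such an $\cS$ exists because $\cF$ is not stable, it is reflexive (a saturated subsheaf of a reflexive sheaf is the kernel of a morphism of reflexive sheaves, hence reflexive), it is $\omega$-stable by minimality, and $\cQ:=\cF/\cS$ is torsion-free and $\omega$-semistable of slope $\mu_\omega(\cF)$ (so $\cQ^{**}$ is reflexive and $\omega$-semistable of the same slope). Setting $\xi:=\tfrac{1}{\rank\cS}c_1(\cS)-\tfrac{1}{\rank\cQ}c_1(\cQ)\in H^{1,1}_{BC}(X,\R)$, additivity of $c_1$ and of $\mathrm{ch}_2$ in the exact sequence $0\to\cS\to\cF\to\cQ\to 0$ gives the identity
$$\frac{\mathrm{disc}(\cF)}{r}=\frac{\mathrm{disc}(\cS)}{\rank\cS}+\frac{\mathrm{disc}(\cQ)}{\rank\cQ}-\frac{(\rank\cS)(\rank\cQ)}{r}\,\xi^{2}.$$
Since $\mu_\omega(\cS)=\mu_\omega(\cQ)$ the class $\xi$ is $\omega$-primitive, so by the Hodge index theorem $-\,\xi^{2}\cdot\{\omega\}^{n-2}\ge 0$. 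Moreover $\mathrm{disc}(\cS)\cdot\{\omega\}^{n-2}\ge 0$ by \cite{BS} ($\cS$ being stable, hence polystable), and $\mathrm{disc}(\cQ)\cdot\{\omega\}^{n-2}=\mathrm{disc}(\cQ^{**})\cdot\{\omega\}^{n-2}+2(\rank\cQ)\,\mathrm{ch}_2(\cQ^{**}/\cQ)\cdot\{\omega\}^{n-2}\ge 0$, because $\mathrm{disc}(\cQ^{**})\cdot\{\omega\}^{n-2}\ge 0$ (apply \cite{BS} to a Jordan--Hölder polystable model of $\cQ^{**}$ together with the see-saw identity) and $\mathrm{ch}_2$ of the torsion sheaf $\cQ^{**}/\cQ$ is an effective class of dimension $n-2$. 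As the left-hand side integrates to $0$ against $\{\omega\}^{n-2}$, all three terms on the right vanish against $\{\omega\}^{n-2}$: in particular $\mathrm{disc}(\cS)\cdot\{\omega\}^{n-2}=0$, $\mathrm{disc}(\cQ^{**})\cdot\{\omega\}^{n-2}=0$, and $\mathrm{ch}_2(\cQ^{**}/\cQ)\cdot\{\omega\}^{n-2}=0$; the last equality, by positivity of $\omega$, forces $\cT:=\cQ^{**}/\cQ$ to be supported in codimension $\ge 3$.

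Now the induction hypothesis applies to $\cS$ and to $\cQ^{**}$ (both reflexive, $\omega$-semistable, of rank $<r$, with vanishing discriminant against $\{\omega\}^{n-2}$), so both are locally free; it remains to promote this to local freeness of $\cF$ itself. Here I would extend the sequence: since $\cS$ is locally free and $\cT$ is supported in codimension $\ge 3$, the sheaves $\mathcal{E}xt^{j}(\cT,\cS)$ vanish for $j\le 2$, hence $\mathrm{Ext}^2(\cT,\cS)=0$, and therefore the extension class of $0\to\cS\to\cF\to\cQ\to 0$ in $\mathrm{Ext}^1(\cQ,\cS)$ lifts, under the connecting map of $0\to\cQ\to\cQ^{**}\to\cT\to 0$, to a class in $\mathrm{Ext}^1(\cQ^{**},\cS)$. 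The corresponding extension $0\to\cS\to\cF^{\dagger}\to\cQ^{**}\to 0$ is an extension of a locally free sheaf by a locally free sheaf, so $\cF^{\dagger}$ is locally free, and by construction $\cF\subset\cF^{\dagger}$ with $\cF^{\dagger}/\cF\cong\cT$ supported in codimension $\ge 3$; thus $\cF$ and $\cF^{\dagger}$ agree outside a set of codimension $\ge 3$, whence $\cF^{**}=(\cF^{\dagger})^{**}=\cF^{\dagger}$. Since $\cF$ is reflexive, $\cF=\cF^{\dagger}$ is locally free (and $\cT=0$, i.e.\ $\cQ=\cQ^{**}$), which closes the induction. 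The main obstacle is the analytic input: on a general, possibly non-projective, compact K\"ahler manifold one cannot reduce to hyperplane curves, so both the Bogomolov inequality and, crucially, its equality case for stable (and, via Jordan--Hölder, semistable) reflexive sheaves must be imported from \cite{BS}; everything else is formal, the only delicate points being the sign of the Hodge-index term in the see-saw identity and the homological bookkeeping (saturation, reflexivity of $\cS$, and the $\mathrm{Ext}^2$-vanishing) needed to pass from ``the pieces are locally free'' to ``$\cF$ is locally free''.
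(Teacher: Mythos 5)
Your proof is correct and follows essentially the same route as the paper's: induction on rank, the Bando--Siu equality case for a stable (polystable) piece, the discriminant see-saw identity together with effectiveness of $\mathrm{ch}_2(\cQ^{**}/\cQ)$ to force the discriminants of $\cS$ and $\cQ^{**}$ to vanish and the torsion $\cQ^{**}/\cQ$ to have codimension $\ge 3$ support, and an extension argument to reassemble $\cF$. Your $\mathrm{Ext}^2(\cT,\cS)=0$ step is the same cohomological content as the paper's appeal to its Lemma~4, and your choice of $\cS$ stable of minimal rank together with the explicit Hodge-index treatment of the $\omega$-primitive class $\xi$ is only a cosmetic variant of the paper's direct citation of Nakayama's formula (II.9).
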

\begin{proof}
We shall prove by induction on the rank of $\cF$.
If $\cF$ is polystable, it is direct consequence of corollary 3 of \cite{BS}.
We may assume $\cF$ is not
polystable. Then there is an exact sequence
$$0 \to \cS \to \cF \to \cQ \to 0,$$
where $\cS$ and $\cQ$ are non-zero torsion-free sheaves satisfying the relation of slope
$\mu(\cS)=\mu(\cF)=\mu(\cQ)$.
The sheaves $\cS$ and $\cQ^{**}$ are semi-stable.

Recall the formula (II.9) Chap. II \cite{Nak}
$$\hat{\Delta}_2 (\cF)=\hat{\Delta}_2 (\cS)+\hat{\Delta}_2 (\cQ)-\frac{\mathrm{rank \; } \cS \cdot \mathrm{rank \; } \cQ}{\mathrm{rank \; } \cF} (\mu(\cS)-\mu(\cQ))^2$$
where
$\hat{\Delta}_2 (\cF) : = (c_2 (\cF) - \frac{r-1}{2r} c_1 (\cF)^2 )\cdot \omega^{n-2}.$
The Bogomolov inequality gives $\hat{\Delta}_2 (\cS) \geq 0$
and $\hat{\Delta}_2 (\cQ^{**}) \geq 0$.
On the other hand, $c_2(\cQ^{**}/\cQ)$ is represented by an effective cycle supported in the support of the torsion sheaf $\cQ^{**}/\cQ$.
Thus we have that
$$\hat{\Delta}_2 (\cQ^{**})=\hat{\Delta}_2 (\cQ^{})=\hat{\Delta}_2 (\cS^{})=0.$$
By induction, $\cS$ and $\cQ^{**}$ are locally free which by Lemma 4 defines an extension of vector bundles over $X$.
Since $\cF$ coincides with a vector bundle outside an analytic set of codimension at least 3, $\cF$ is locally free.
\end{proof}
As consequence of the lemma and the proposition, we have the following generalisation of Corollary 7.
\begin{mycor}(analogue of Theorem 1.6 \cite{LOY20}) {\it

Let $(X,\omega)$ be a compact K\"ahler manifold of dimension $n$, and let $\cF$ be a reflexive coherent
sheaf on $X$. 
Assume there exists a line bundle $L$ and $m >0$ such that $S^{[m]}\cF \otimes L$ is strongly psef with $c_1(S^{[m]}\cF \otimes L)=0$.
Then $\cF$ is a vector bundle such that $\cF \langle -\frac{1}{m} L \rangle$ is a $\Q-$twisted nef vector bundle.

In particular, let $E$ be a vector bundle of rank $r$ such that $E\langle -\frac{1}{r}\det(E)\rangle$ is $\Q-$twisted strongly psef vector bundle,
then $E\langle -\frac{1}{r}\det(E)\rangle$ is $\Q-$twisted nef vector bundle.}
\end{mycor}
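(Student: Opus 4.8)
The plan is to reduce the statement to Corollary~10 together with the two auxiliary results above: the Chern class identity for symmetric powers (the analogue of Lemma~4.5 of \cite{LOY20}) and the local freeness criterion for $\omega$-semistable reflexive sheaves attaining the Bogomolov equality (the analogue of Proposition~4.6, Chap.~IV of \cite{Nak}). Write $R$ for the rank of $S^{[m]}\cF$. First, since $S^{[m]}\cF\otimes L$ is reflexive, strongly psef, and has $c_1=0$, Corollary~10 makes it a numerically flat vector bundle; in particular it is $\omega$-semistable of slope $0$, it is locally free -- so that $S^{[m]}\cF=(S^{[m]}\cF\otimes L)\otimes L^{-1}$ is locally free as well -- and by Corollary~1.19 of \cite{DPS94} all its rational Chern classes vanish, in particular $c_2(S^{[m]}\cF\otimes L)=0$. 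Using the equality $\det(S^{[m]}\cF)=(\det\cF)^{\otimes mR/r}$ of reflexive rank one sheaves (the two sides agree on the locally free locus of $\cF$, whose complement has codimension $\geq 2$) together with Lemma~7, one records $c_1(S^{[m]}\cF)=\tfrac{mR}{r}c_1(\cF)$, hence $c_1(\cF)=-\tfrac rm c_1(L)$, and then, substituting into the tensor-product formula for $c_2$, the identity $c_2(S^{[m]}\cF)=\tfrac{R(R-1)m^2}{2r^2}\,c_1(\cF)^2$ in $H^4(X,\R)$.

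Next, one compares this with the Lemma, which (in the range $n>2$, $r>2$, $m>2$ where it is stated) gives $c_2(S^{[m]}\cF)=A\,c_1(\cF)^2+B\,c_2(\cF)$ with $B\neq 0$ and $A+\tfrac{r-1}{r}B=\tfrac{R(R-1)m^2}{2r^2}$; subtracting yields $B\big(c_2(\cF)-\tfrac{r-1}{r}c_1(\cF)^2\big)=0$, hence
$$\Big(c_2(\cF)-\frac{r-1}{r}c_1(\cF)^2\Big)\cdot\{\omega\}^{n-2}=0.$$
The excluded cases are harmless: a reflexive sheaf on a manifold of dimension $\leq 2$ is already locally free; small $m$ (including $m=1$, where $\cF\otimes L$ itself is numerically flat) is reached by replacing $(m,L)$ with $(km,L^k)$ for $k\gg 0$, using that $S^{[km]}\cF$ is a direct summand of $S^{[k]}(S^{[m]}\cF)$ in characteristic zero and Proposition~4; and the rank two case is handled by a direct low-rank computation. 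One also checks that $\cF$ itself is $\omega$-semistable: in characteristic zero $\mu_{\max,\omega}(S^{[m]}\cF)=m\,\mu_{\max,\omega}(\cF)$ while $\mu_\omega(S^{[m]}\cF)=m\,\mu_\omega(\cF)$ (standard behaviour of the Harder--Narasimhan filtration under symmetric powers, together with the $c_1$-formula above), so semistability of $S^{[m]}\cF$ forces that of $\cF$. The Proposition above then applies and shows $\cF$ is locally free, i.e.\ a vector bundle.

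Finally, $\cF$ being a vector bundle, $S^m\cF\otimes L=S^{[m]}\cF\otimes L$ is numerically flat, hence nef; by the definition of $\Q$-twisted nefness this is exactly the assertion that $\cF\langle\tfrac1m L\rangle$ is $\Q$-twisted nef -- in fact $\Q$-twisted numerically flat, since the dual $(S^m\cF\otimes L)^{*}\cong S^m(\cF^{*})\otimes L^{-1}$ is nef too. The ``in particular'' assertion is the special case $\cF=E$, $m=r$, $L=(\det E)^{-1}$, for which $c_1(S^{[r]}E\otimes(\det E)^{-1})=0$ holds automatically and $E\langle-\tfrac1r\det E\rangle$ being $\Q$-twisted strongly psef is precisely the hypothesis. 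I expect the main obstacle to be the bookkeeping that turns the numerical information about the reflexive sheaf $S^{[m]}\cF$ into genuine class identities on $X$ and enables the use of the Lemma: one needs not only the vanishing of $c_2$ of the numerically flat bundle $S^{[m]}\cF\otimes L$, but also the precise value $A+\tfrac{r-1}{r}B=\tfrac{R(R-1)m^2}{2r^2}$ of the constants, so that the spurious $c_1(\cF)^2$ contribution cancels and only the pure Bogomolov discrepancy of $\cF$ remains; once this is in hand, local freeness of $\cF$ and the $\Q$-twisted nefness follow formally.
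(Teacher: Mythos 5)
Your proposal follows essentially the same route as the paper: apply the earlier corollary (a strongly psef reflexive sheaf with $c_1=0$ is numerically flat) to $S^{[m]}\cF\otimes L$, use the Chern-class Lemma for $S^{[m]}\cF$ to derive $(c_2(\cF)-\tfrac{r-1}{2r}c_1(\cF)^2)\cdot\omega^{n-2}=0$, check that $\cF$ is $\omega$-semistable, invoke the Bando--Siu-type Proposition to conclude $\cF$ is locally free, and read off $\Q$-twisted nefness. The only differences are minor: the paper derives semistability of $\cF$ by comparing slopes along the generic surjection $S^{[m]}\cF\otimes L\to S^{[m]}\cQ\otimes L$ for any torsion-free quotient $\cQ$, while you invoke $\mu_{\max}(S^{[m]}\cF)=m\,\mu_{\max}(\cF)$; and you explicitly dispose of the corner cases $n\le 2$, $r\le 2$, $m\le 2$ excluded from the Lemma's hypotheses (by reflexive $\Rightarrow$ locally free in low dimension, replacing $(m,L)$ by $(km,L^k)$, etc.), which the paper does not address. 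You also write the conclusion as $\cF\langle\tfrac1m L\rangle$ being $\Q$-twisted nef, which is in fact the sign compatible with the paper's Definition of $\Q$-twist and with the ``in particular'' clause; the $-\tfrac1m L$ in the displayed statement appears to be a typo.
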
 
\begin{proof}
By Corollary 7, $S^{[m]}\cF \otimes L$ is a numerically flat vector bundle.
In particular, $c_2(S^{[m]}\cF \otimes L)=0$ and $S^{[m]}\cF \otimes L$ is semistable.
In fact, $S^{[m]}\cF \otimes L$ admits a filtration of vector bundles
$$0=\cE_0 \subset \cE_1 \subset \cdots \subset \cE_p =S^{[m]}\cF \otimes L$$
such that for each $i$, $\cE_i /\cE_{i-1}$ is flat and polystable.
For any subsheaf $\cS$ of $S^{[m]}\cF \otimes L$, let $i_0:=\max \{i, \cE_i \subset \cS\}$.
Then if $\cS=\cE_{i_0}$, $\mu(\cS)=0$.
Otherwise $\cF/\cE_{i_0}$ is a non zero subsheaf of $\cE_{i_0+1}/\cE_{i_0}$, thus $\mu(\cS)=\mu(\cS/\cE_{i_0})+\mu(\cE_{i_0})\leq \mu(\cE_{i_0+1}/\cE_{i_0})=0$.
  
By the above lemma, direct calculations yield
$$(c_2(\cF)-\frac{r-1}{2r}c_1(\cF)^2) \cdot \omega^{n-2}=0.$$
We claim that $\cF$ is also semistable.
In fact, for any torsion free quotient sheaf $\cQ$ of $\cF$, we have generic surjective morphism
$$\alpha: S^{[m]}\cF \otimes L \to S^{[m]}\cQ \otimes L.$$
The image of $\alpha$ coincide with $S^{[m]}\cQ \otimes L$ outside an analytic set of codimension at least 2,
thus these two sheaves have the same slope.
The inequality $\mu(S^{[m]}\cF \otimes L) \leq \mu(S^{[m]}\cQ \otimes L)$ implies that 
$\mu(\cF) \leq \mu(\cQ)$.
In fact, $S^{[m]}\cF$ and $\cF$ are locally free outside a closed analytic set $A$ of codimension at least 2.
Since $H^2(X, \C) \cong H^2(X \setminus A, \cF)$,
$$c_1(S^{[m]}\cF)=\frac{1}{r} { {m+r-1}\choose{m}} c_1(\cF)$$
from the corresponding formula by restriction on $X\setminus A$ on which the coherent sheaves are locally free.
Here $r$ is the rank of $\cF$.
We have of course similar formula for $\cQ$.

For the general case, it is a direct consequence of the above proposition.
Thus we can prove the following equivalent conclusion.
$\cF$ is locally free and there is a filtration of vector subbundles
$$0 = \cF_0 \subset  \cF_1 \subset \cdots \subset \cF_p = \cF$$
such that $\cF_i/\cF_{i+1}$ are projectively flat vector bundles and $\mu(\cF_i/\cF_{i+1})=\mu(\cF)$ for any $i$.
\end{proof}
  
\end{document}